\documentclass{amsart}
\usepackage{amsthm,amstext,amsmath,amscd,amssymb,latexsym}
\usepackage{color}
\usepackage[matrix,arrow]{xy}
\usepackage{amsfonts,enumerate,array}
\usepackage[colorlinks=true]{hyperref}
\usepackage{todonotes}
	\usepackage{verbatim}
\usepackage[toc,page]{appendix}
\usepackage{tikz}
\usetikzlibrary{decorations.pathreplacing}

\sloppy

\newcommand{\CC}{{\mathbb C}}
\newcommand{\PP}{\mathbb{P}}

\newcommand{\ZZ}{{\mathbb Z}}

\DeclareMathOperator{\Pic}{Pic}
\DeclareMathOperator{\Cr}{Cr}
\DeclareMathOperator{\Eff}{Eff}

\DeclareMathOperator{\Mov}{Mov}

\DeclareMathOperator{\Cox}{Cox}
\DeclareMathOperator{\adeg}{adeg}

\DeclareMathOperator{\pardeg}{pardeg}

\newcommand{\paper}{: \begin{it}}
\newcommand{\jour }{, \end{it}}

\newtheorem{theorem}{Theorem}[section]
\newtheorem*{theorem*}{Theorem}
\newtheorem{lemma}[theorem]{Lemma}
\newtheorem{proposition}[theorem]{Proposition}
\newtheorem{corollary}[theorem]{Corollary}
\newtheorem{conjecture}[theorem]{Conjecture}

\newtheorem{question}[theorem]{Question}
\theoremstyle{definition}
\newtheorem{definition}[theorem]{Definition}
\newtheorem{notation}[theorem]{Notation}
\newtheorem{remark}[theorem]{Remark}
\newtheorem{problem}[theorem]{Problem}
\newtheorem{example}[theorem]{Example}

\theoremstyle{remark}

\numberwithin{equation}{section}

\definecolor{mygreen}{rgb}{0.13, 0.55, 0.13}

\title{On divisorial $(i)$ classes}

\author{Olivia Dumitrescu}
\address{
Olivia Dumitrescu:
University of North Carolina at Chapel Hill\\
340 Phillips Hall,\\
CB 3250, Chapel Hill,\\
 NC 27599-3250}

\address{and Simion Stoilow Institute of Mathematics\\
Romanian Academy\\
21 Calea Grivitei Street\\
010702 Bucharest, Romania}
\email{dolivia@unc.edu}

\thanks{The first author is supported by NSF grant DMS1802082. The second author is supported by Collaboration Grant 586691 from the Simon's Foundation.}

\author{Nathan Priddis}
\address{Nathan Priddis: Brigham Young University, Provo, Utah 84602, US}
\email{priddis@math.byu.edu}

\keywords{$(-1)$ curves, $(-1)$ divisors, $(-1)$ classes, Noether's inequality, Mukai pairing}

\subjclass[2010]{Primary: 14C20 Secondary:  14C17, 14J70}
\begin{document}

\begin{abstract} 
In this paper we introduce and study \emph{divisorial $(i)$ classes} for the blow up of projective space $\mathbb{P}^n$ in several points for $i\in \{-1,0,1\}$. We generalize Noether's inequality, and we  prove that all divisorial $(i)$ classes are in bijective correspondence with the orbit of the Weyl group action on one exceptional divisor following Nagata's original approach.  
Moreover, we prove that the irreducibility condition from the definition of divisorial $(i)$ classes can be replaced by the numerical condition of having positive intersection with all divisorial $(-1)$ classes of smaller degree via the Mukai pairing.
\end{abstract}

\maketitle
{ \hypersetup{linkcolor=black} \tableofcontents}
\setcounter{section}{0}

\newpage
\section*{Introduction}
In this article, we generalize several important results about the blow up of $\PP^n$ in several points. As motivation, let us recall one of Hilbert's problems. 

\subsection{Hilbert's 14th problem}\label{intro} In 1900 Hilbert posed 23 problems at the International Congress of Mathematicians in Paris. Hilbert's fourteenth problem encountered several modifications and generalizations in the subsequent years, one of them due to Zariski in 1953 \cite{zariski}. 
The original formulation of the conjecture (following Nagata \cite{nagataL}) is as follows.

\begin{problem}\label{problem} Let $K$ be a field and $G$ a subgroup of the linear group $GL(s, K)$, acting via automorphisms on the polynomial ring $K[x_1,\ldots, x_s]$. Is the invariant ring $K[x_1,\ldots, x_s]^G$ finitely generated over field $K$?
\end{problem}

The answer of this problem was proven in the affirmative in the following cases: 
\begin{itemize}
\item $K=\CC$ and $G=SL(s, K)$ (Hilbert).
\item $G$ a finite group and $K$ arbitrary (Emmy Noether).
\item $K=\CC$ and $G=\mathbb{G}_{a}$ a one parameter group (Weitzerbock).
\item $K=\CC$ and $G$ a connected semi-simple group (H. Weyl).
\end{itemize}

In 1958 Masayoshi Nagata gave a counterexample to Hilbert's 14th problem (see \cite{nagatab} and \cite{nagata}) by considering the standard unipotent linear action of $\CC^s$ on the polynomial ring in $2s$ variables $S=\CC[x_1,\ldots, x_s, y_1, \ldots, y_s]$ 
\[
(t_1,\ldots, t_s)\cdot(x_1,\ldots, x_s, y_1,\ldots, y_s)=(x_1,\ldots, x_s, y_1+t_1x_1,\ldots, y_s+t_sx_s)
\]

One of the major breakthroughs of Nagata was to show that if $G=\mathbb{G}_{a}^{l}\subset \CC^s$ then the invariant ring $S^G$ is isomorphic to the Cox ring of the blow up $X_{s-l-1,s}$, $S^G\cong \Cox(X_{s-l-1,s})$ (see Section \ref{cox}). In \cite[Theorem 2a]{nagata1} Nagata proves that on $X_{2,s}$ there exists a bijection between $(-1)$ curves (what Nagata calls irreducible exceptional curves of the first type) and the orbit of the Weyl group under the class of an exceptional divisor (what Nagata calls pre-exceptional type of first kind).

In particular, he proved that if $G=\mathbb{G}_{a}^{13}$, the  general linear group in $\CC^{16}$, the invariant ring $S^G$ is not finitely generated. 
This is due to the fact that there are an infinite number of $(-1)$ curves in the Picard group of the blow up of $\PP^2$ in $16$ general points (which we will denote by $X_{2, 16}$), or equivalently that the orbit of the Weyl group $W_{2, 16}$ on one exceptional divisor in $X_{2,16}$ is not finitely generated. We will discuss this more below. 

This theme of research was continued by Mori and led to the development of the Minimal Model
Program and to the identification of Mori Dream Spaces: those for which the Cox ring
is finitely generated, as presented in Section \ref{cox}.
In Section~\ref{s:Nagatacorrespondence} we generalize Nagata's approach to higher dimensions.

Next, we define the main characters in this paper and set some notation.

\begin{notation}
Let $X_{n,s}$ denote the blowup of $\PP^n$ in a collection of $s$ points $p_1,\dots,p_s$ in general position. The Picard group is generated by $\Pic X_{n,s}=\langle H,E_1,\dots,E_s \rangle$, where $H$ is a general hyperplane in $\PP^n$ and $E_i$ is the exceptional divisor corresponding to the point $p_i$. A general divisor $D\in \Pic X_{n,s}$ can be written in terms of degree $d$ and multiplicities $m_i$:
\begin{equation}\label{eq:pic}
D=dH-\sum_{j=1}^s m_j E_j.
\end{equation}
\end{notation}

\subsection{Main results.}

The main goal of this paper is to develop the techniques to prove that the {\it $(i)$ Weyl divisors} are equivalent to {\it divisorial $(i)$ classes} defined by algebraic equations \eqref{eq:num}, below.

Laface and Ugaglia introduced the concept of \emph{$(-1)$ classes} in \cite{LUst}, where they also study properties of the Dolgachev-Mukai form. 
Furthermore, we have used the term \emph{divisorial} $(i)$ classes to emphasize that they are divisors.

\begin{definition} (see also Definition~\ref{def:-1div})\label{intro:-1div}
	For  $i\in \{-1,0,1\}$ we introduce 
	\begin{enumerate} 
\item A \emph{divisorial $(i)$ class} on $X_{n,s}$  as an \textit{effective and  irreducible} divisor of the form \eqref{eq:pic} satisfying following numerical conditions 
\begin{equation}\label{eq:num}
\begin{split}
	& (n-1)d^2-\sum_{j=1}^{s} m_j^2=i, \\
	& d(n+1)-\sum_{j=1}^s m_j=2+i.
\end{split}
\end{equation}

\item We also define {\it $(i)$ Weyl divisors} to be Weyl group orbit of a general hyperplane passing through $n-1-i$ points.
\end{enumerate}
\end{definition}

\begin{remark}
	We first remark that in the planar case the smoothness hypothesis is needed in order to further identify the two concepts above with the arithmetic genus zero via the adjunction formula, see for example \cite{DO}. In higher dimension however, rationality aspects are difficult and are not discussed in this work. So for the simplicity, we will omit the smoothness hypothesis in the definition above.

	Moreover, we remark that the two definitions above, {\it divisorial (i) class} and {\it (i) Weyl divisor}, turn out to be equivalent in any space of the form $X_{n,s}$, but this is not an obvious observation, and it will be proved in Theorem \ref{thm:weyl3}. Namely, it is easy to see that a {\it (i) Weyl divisor} is always a {\it divisorial (i) class} but the converse has to go through more difficult computations. One of the aim of this paper is to exploit the equivalence of the two definitions.
	
As an historical argument, we recall that the equivalence of the two concepts was first observed and used by Nagata in the planar case of blown up $\mathbb{P}^2$ in $s$ points. In the counterexample of Hilbert's 14th problem, Nagata used the equivalence of the $(-1)$ curves, Definition \eqref{intro:-1div} part (1) for $n=2$, to the Weyl group orbit of an exceptional divisor, Definition \eqref{intro:-1div} part (2); these two concepts identify via M. Noether inequality in $\mathbb{P}^2$. He later used the infinity of the Weyl group to conclude the infinite generation of the Cox ring for $X_{2,s}$ for $s=16$, see Section \ref{problem}.

Moreover, we recall that the attempt to prove the equivalence of the two curve classes analogs of Definition \ref{intro:-1div} has failed in $X_{n,s}$, since extra numerical assumptions are needed. It was proved, however in Theorem 6.13 of \cite{dm3} that the two curve analogs recently introduced are equivalent for blown up $\mathbb{P}^3$ in points via the same techniques as developed in this paper. This recent works suggests that the Weyl orbits of curve classes have some numerical interpretation, however the mathematical formulation of the numerical aspect is not obvious.

\end{remark}

Mukai introduced the notion of {\it $(-1)$ divisors} (see Definition~\ref{def:-1mu}). In the planar case, the notion of {\it $(-1)$ divisor} and {\it divisorial $(-1)$ class} is the same with {\it $(-1)$ curve} by Lemma \ref{lem:eq}, but this is not true in higher dimension. 
Throughout the paper we emphasize similarities and differences between the two-dimensional and $n$-dimensional cases, allowing us to conclude that the definition of divisorial $(i)$ classes is a natural generalization of $(i)$ curves. In order to state the main results, let us introduce some notation. We will define these terms more precisely later.  

\begin{notation}\label{not}
Let $\langle, \rangle$ define the Dolgachev-Mukai pairing on $\Pic(X_{n,s})$ (see Equation~\eqref{eq:pair}). By abuse of notation we use the terminology \emph{self-intersection} of a divisor $D$ on $\Pic(X_{n,s})$ to denote $\langle	D,D \rangle$. We also use terminology \textit{anticanonical degree} of the divisor $D$ to denote intersection of $D$ with the anticanonical divisor (see Equation~\eqref{eq:canonical}) rescaled by a factor of $1/(n-1)$, 
\[
\adeg(D):=	\frac{1}{n-1}\langle D,-K_{X_{n,s}}\rangle.
\]
Finally, we use \textit{degree} for the intersection of $D$ with a general hyperplane class $H$ 
\[
\deg(D):=\langle	D,H \rangle
\]
to denote the regular degree of the hypersurface defined by $D$. 

More explicitly, one can see that 
\[
\langle D,D \rangle = (n-1)d^2-\sum_{j=1}^s m_j^2
\quad\text{ and }\quad
\adeg(D)=d(n+1)-\sum_{j=1}^s m_j,
\]
therefore the description of divisorial $(i)$ classes in \eqref{eq:num} can be given as $\langle D, D\rangle=i$ and $\adeg(D)=2+i$.

\end{notation}

The main results of this article are the following three theorems. The first of these theorems, which we prove in Section~\ref{s:Noether}, generalizes the Max Noether inequality for $(-1)$ curves on $X_{2,s}$ to divisorial $(i)$ classes (in higher dimensional spaces, i.e. $n\geq 2$) for $i\in\{-1,0,1\}$. 

\begin{theorem}\label{thm:noether2}
	Let $D$ be a divisor with $d\geq m_j\geq 0$ satisfying $\adeg D=c+2$ and $\langle D, D \rangle=c+e$ for two integers $c$ and $e$ satisfying $-2\leq c, e \leq 1$. When $d=1$ further assume that $\langle D, D \rangle<0$. 
	We have the following: If $s\leq n$, then $m_1+\dots+m_s>(n-1)d$; if $s\geq n+1$, we can reorder the indices so that $m_1\geq m_2\geq \dots\geq m_s$ and $m_1+\dots+m_{n+1}>(n-1)d.$  
\end{theorem}



If $e=0$ then we say that any divisor class satisfying Definition~\ref{def:-1div} (irreducible or not) is not \emph{Cremona reduced} for every $-1\leq i \leq 1$, (see the M. Noether inequality \ref{thm:noether2} of Section \ref{s:Noether}).

One may notice that the assumptions in Theorem~\ref{thm:noether2} are weaker than the assumptions in the original theorem by Max Noether (see e.g. \cite{Dolgachev} and Remark~\ref{rem:Notherhypothesis}). The original Max Noether inequality requires the curve $C$ to be rational and $-2\leq \langle C,C\rangle \leq 1$. This is equivalent to saying $\adeg C=2+\langle C,C\rangle$. From Theorem~\ref{thm:noether2}, however, we deduce that we can apply Max Noether inequality for all $(i)$ curves with $-1\leq i \leq 1$, i.e. fixed curves \emph{and} movable curves.  

The second result, which we prove in Section~\ref{s:Nagatacorrespondence} is a generalization of a result discovered by Nagata in \cite{nagata1} (and reformulated by Dolgachev in \cite{DO}). Namely Nagata's original approach extends to $(i)$ Weyl divisors in $X_{n,s}$ giving  a correspondence between divisorial $(i)$ classes and $(i)$ Weyl divisors, as follows:

\begin{theorem}\label{thm:weyl3} Let $D$ be a divisor in $\Pic(X_{n,s})$. Then $D$ is a divisorial $(i)$ class if and only if it is a $(i)$ Weyl divisor.
In particular, the Weyl group acts transitively on the set of divisorial  $(i)$ classes.
\end{theorem}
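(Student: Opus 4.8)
The plan is to prove the two inclusions separately, with the "only if" direction (every $(-1)$ class lies in a Weyl orbit of some $E_i$) being the substantive one, following Nagata's descent strategy. The Weyl group $W_{n,s}$ is generated by the permutations of the $E_i$ together with the standard Cremona involution $\sigma$ centered at $n+1$ of the points, which acts on a class $D=dH-\sum m_iE_i$ (say with $m_1,\dots,m_{n+1}$ the chosen multiplicities) by $d\mapsto nd-(n-1)(m_1+\dots+m_{n+1})$ and $m_{i_j}\mapsto d-(n-1)^{-1}\cdot(\dots)$—more precisely by the reflection formula for the Dolgachev--Mukai pairing, so that $W_{n,s}$ preserves both quadratic forms appearing in \eqref{eq:num}. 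The "if" direction is then immediate: $E_i$ itself satisfies the two numerical identities in \eqref{eq:num}, these identities are $W_{n,s}$-invariant, and $E_i$ is effective, reduced and irreducible; one must only check that every class in the orbit remains effective/irreducible, which is where I would invoke that a standard Cremona transformation based at general points sends an irreducible hypersurface not equal to one of the exceptional configurations to an irreducible hypersurface, together with Theorem~\ref{thm:noether2} to control when such a Cremona move is legitimate.

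For the "only if" direction, let $D$ be a $(-1)$ class, which we may assume is not one of the $E_i$ (those are the base case). After permuting points we may order the multiplicities so that $m_1\ge m_2\ge\cdots\ge m_s\ge 0$ and we may assume $d\ge m_i$ for all $i$ (if $d<m_1$ one first argues $D$ cannot be irreducible of the required type, or reduces to that case). The key point is that $D$ satisfies the hypotheses of the Max Noether inequality: by \eqref{eq:num} we have $\langle D,D\rangle=-1$ and $\adeg D = \frac{1}{n-1}\langle D,-K\rangle = \frac{1}{n-1}\big((n+1)d-\sum m_i\big)\cdot$(appropriate normalization)$\,=2$ using the second identity, so we are exactly in the case $c=0$, $e=-1$ of Theorem~\ref{thm:noether2} (one must check the $d=1$ exceptional hypothesis: if $d=1$ then $\langle D,D\rangle=-1<0$ holds, so Theorem~\ref{thm:noether2} still applies). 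Hence there are indices $i_1<\cdots<i_{n+1}$ with $m_{i_1}+\cdots+m_{i_{n+1}}>(n-1)d$; taking the $n+1$ largest multiplicities, $m_1+\cdots+m_{n+1}>(n-1)d$. Applying the Cremona involution $\sigma$ at the points $p_{i_1},\dots,p_{i_{n+1}}$ strictly decreases the degree: $d' = nd-(n-1)(m_1+\cdots+m_{n+1}) < nd-(n-1)\cdot(n-1)d$—and a short estimate using $\sum m_i^2 \le d\sum m_i$ together with the identities shows $d'<d$ and $d'\ge 0$, so the new class $\sigma(D)$ is again of the form \eqref{eq:pic} with strictly smaller degree, still satisfies \eqref{eq:num}, and (by the Cremona-irreducibility input) is still effective, reduced and irreducible, i.e. again a $(-1)$ class. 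By induction on $d$ we eventually reach degree $0$, and the only $(-1)$ classes of degree $0$ are the $E_i$ (the numerical conditions force $\sum m_i^2=1$, $\sum m_i = -1\cdot$... wait, with $d=0$ the second equation gives $-\sum m_i = 1$, i.e. exactly one $m_i = -1$ and the rest $0$—which after the sign conventions of the Weyl action corresponds to some $E_i$). Tracing the sequence of Cremona moves and permutations back up exhibits $D$ in the $W_{n,s}$-orbit of that $E_i$.

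The transitivity statement then follows formally: every $(-1)$ class lies in the orbit of some $E_i$, and the $E_i$ are all in a single orbit (they are permuted by $W_{n,s}$), so all $(-1)$ classes form one orbit.

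The main obstacle I anticipate is not the descent bookkeeping but rather the \emph{irreducibility is preserved under the Cremona step} claim: one needs that when $D$ is irreducible and the $n+1$ points $p_{i_1},\dots,p_{i_{n+1}}$ are in general position, the strict transform $\sigma(D)$ is again irreducible (and effective). This can fail for degenerate configurations, which is exactly why the hypotheses of Theorem~\ref{thm:noether2} and the effectivity assumption are essential (cf. the cautionary Examples~\ref{eg:ex1} and~\ref{eg:NotEffective} flagged in the introduction). I would handle it by noting that an irreducible hypersurface whose class is a $(-1)$ class cannot be a component of, nor contain, the exceptional locus of the Cremona map (its class is primitive and not a multiple of a hyperplane-through-the-center class), so $\sigma$ restricts to a birational isomorphism from $D$ onto its image, and birational images of irreducible varieties are irreducible; effectivity is then automatic since the image is an honest (reduced, irreducible) hypersurface with the predicted class. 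A secondary technical point is the reduction to the normalized situation $d\ge m_i\ge 0$ with sorted multiplicities at the very start—this should follow by a preliminary permutation and, if some $m_i>d$, a case analysis showing such a class is either not irreducible or is handled by a single Cremona move that restores $d\ge m_i$.
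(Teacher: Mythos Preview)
Your approach is exactly Nagata's descent as in the paper: use Theorem~\ref{thm:noether2} to find $n+1$ multiplicities whose sum exceeds $(n-1)d$, apply the corresponding Cremona involution to strictly lower the degree, and induct. Two computational slips to fix: from \eqref{eq:num} one has $\adeg D=(n+1)d-\sum m_i=1$, not $2$, so the correct parameters in Theorem~\ref{thm:noether2} are $c=-1$, $e=0$ (this changes nothing, since both pairs lie in the admissible range); and the Cremona degree is $d'=nd-(m_1+\cdots+m_{n+1})$, with no factor of $(n-1)$ --- the inequality $d'<d$ is simply $k=m_1+\cdots+m_{n+1}-(n-1)d>0$, which is precisely Noether's conclusion.

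For the preservation of irreducibility under $\Cr$, the paper's argument (Proposition~\ref{prop:sd}) is cleaner than your geometric one: since $\Cr$ is an involution sending effective classes to effective classes, a decomposition $\Cr D=F+G$ into effective pieces would give $D=\Cr F+\Cr G$, contradicting irreducibility of $D$. This also handles your worry about negative multiplicities after a Cremona step: if $\Cr D$ is irreducible and some coefficient $m_i'<0$, then $|m_i'|E_i$ splits off as a fixed component, forcing $\Cr D=E_i$ outright (the paper spells this out explicitly rather than letting the induction bottom out at $d=0$). Your geometric argument about strict transforms would also work but requires more care about the indeterminacy locus.
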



Laface and Ugaglia gave an alternate proof only when $i=-1$ in \cite{LUst} via different methods.
However, until now, we did not realize the importance of divisorial $(0)$ divisors and divisorial $(1)$ divisors. 
Indeed, let $\mathcal{Z}_{\geq 0}$
denote the cone of curve classes in $A^{n-1}(X_{n, s})$ that meet all $(0)$-divisorial classes non-negatively.

Further, define $\mathcal{Z}_{\geq 0}\langle -1 \rangle$
to be the cone generated by $\mathcal{Z}_{\geq 0}$ and all $(-1)$-curves in $X_{n, s}$. Theorem~\ref{thm:weyl3} further implies the following Corollary, which also appeared in \cite{dm2}, as an application of this work.

\begin{corollary}
	The cone of classes of effective curves in $X_{n, s}$ is a subcone of 
	$\mathcal{Z}_{\geq 0}\langle -1 \rangle$.
\end{corollary}

As a corollary, we also generalize a result of Dolgachev regarding rational curves with self-intersection $-2$ from the two-dimensional to $n$-dimensional space. 

\begin{corollary}\label{thm:dol} Let $r\in \{-3, -2\}$. There are no irreducible divisors $D$ on $X_{n,s}$ with $ \langle D, D \rangle=r$ and $\adeg(D)=-2-r$.
\end{corollary}

Finally, we want to consider the condition of irreducibility.  
In general, this is a difficult condition to check, and one might ask whether any curve satisfying numerical conditions \eqref{eq:num} is irreducible. 

The answer is no; however, this question motivated the main result in \cite{DO} for planar curves
where it has been shown the irreducibility criterium of Definition~\ref{def:-1div} can be replaced by a numerical condition based on intersection of divisors with smaller degree. 

Moreover, a similar statement is true for divisorial $(i)$ classes in arbitrary dimension. This is done in the following theorem, which we prove in Section~\ref{s:irreduc}.
Furthermore, even in the case of planar curves, Theorem \ref{thm:do2} generalizes the  main result in \cite{DO} from $i=-1$ to $i\in\{-1,0,1\}$.

\begin{theorem}\label{thm:do2} Take $i\in\{-1,0,1\}$. The divisor $D$ is a divisorial $(i)$ class on $X_{n,s}$ if and only if $D$ is effective satisfying numerical conditions \eqref{eq:num} and for all degrees $0<d'<d$ and all divisorial $(-1)$ classes $D'$ of degree $d'$ we have $\langle D, D' \rangle\geq 0$.
\end{theorem}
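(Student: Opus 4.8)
The plan is to prove the two implications separately, with the forward direction being essentially immediate and the reverse direction carrying all the weight. For the forward direction: if $D$ is a $(-1)$ class it is by definition effective and satisfies \eqref{eq:num}; and if $D'$ is a $(-1)$ class of strictly smaller degree $d'<d$, then $D\neq D'$, both are effective, reduced and irreducible, so they share no common component, whence $\langle D,D'\rangle\geq 0$ because the Dolgachev--Mukai pairing of two distinct irreducible effective divisors is a sum of local intersection contributions that are nonnegative. (One should double-check that the Dolgachev--Mukai pairing of two effective classes with no common component is genuinely $\geq 0$ in the $n$-dimensional setting; this is the analogue of the surface statement and presumably was established when the pairing was introduced in Section~\ref{s:irreduc}, or follows from $\langle D, D'\rangle$ being a positive multiple of $D\cdot D'\cdot H^{n-2}$ for effective $D,D'$.)

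For the reverse direction, suppose $D$ is effective, satisfies \eqref{eq:num}, and has $\langle D, D'\rangle\geq 0$ for every $(-1)$ class $D'$ of degree $d'<d$. I would argue by induction on the degree $d$. The base case $d=1$: the numerical conditions force $D$ to be linearly equivalent to some $E_i$ up to the obvious combinatorics (a hyperplane through $n$ of the points, or an exceptional divisor), and these are visibly $(-1)$ classes. For the inductive step, the idea — following the Nagata/Dolgachev strategy the paper advertises — is to use Theorem~\ref{thm:noether2} (Noether's inequality) to find indices $i_1<\dots<i_{n+1}$ with $m_{i_1}+\dots+m_{i_{n+1}}>(n-1)d$; this requires first checking that $D$ satisfies the hypotheses of Theorem~\ref{thm:noether2}, i.e.\ that after reordering $d\geq m_i\geq 0$ and that $\adeg D$ and $\langle D,D\rangle$ lie in the required ranges — but for a class satisfying \eqref{eq:num} one computes directly that $\adeg D = 2$ and $\langle D,D\rangle = -1$, so $c=0$, $e=-1$, comfortably inside $[-2,1]$, and the hypothesis $m_i\geq 0$ is the only genuine thing to verify (the positivity of the $m_i$ should follow from $\langle D, E_i\rangle = m_i\geq 0$, which is one of our standing assumptions since each $E_i$ is a $(-1)$ class of degree $0<d$... or, if degree-$0$ classes are excluded from the hypothesis, from effectivity of $D$ together with $E_i$ being a fixed component analysis). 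Then apply the standard quadratic Cremona transformation $\mathrm{cr}$ based at $p_{i_1},\dots,p_{i_{n+1}}$: this is an element of the Weyl group, it preserves \eqref{eq:num}, and Noether's inequality guarantees that $\deg(\mathrm{cr}(D)) < d$. Now $\mathrm{cr}(D)$ is effective (Cremona transformations send effective classes of non-maximal degree to effective classes — this is where the effectivity hypothesis is essential, cf.\ the cautionary Examples~\ref{eg:ex1} and~\ref{eg:NotEffective}), satisfies \eqref{eq:num}, and one must check it still has nonnegative pairing with all $(-1)$ classes of degree smaller than its own degree: this follows because $\mathrm{cr}$ is an isometry of the pairing and sends $(-1)$ classes to $(-1)$ classes bijectively (Theorem~\ref{thm:weyl3}), so the condition is stable under $\mathrm{cr}$. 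By induction $\mathrm{cr}(D)$ is a $(-1)$ class, hence lies in the Weyl orbit of some $E_i$ by Theorem~\ref{thm:weyl3}, hence so does $D = \mathrm{cr}^{-1}(\mathrm{cr}(D))$, and applying Theorem~\ref{thm:weyl3} once more in the other direction, $D$ is a $(-1)$ class.

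The main obstacle — and the step I would spend the most care on — is the claim that $\mathrm{cr}(D)$ remains effective and that the inductive hypothesis genuinely applies to it, i.e.\ that the degree strictly drops and that no degenerate behavior occurs when several $m_i$ are equal to $d$ or when the Cremona transformation is "undefined" in a naive sense. The subtlety is that reducing the degree via Noether might, a priori, require passing through a class whose effectivity is not automatic; the resolution is that $\langle D, D'\rangle\geq 0$ for all lower-degree $(-1)$ classes $D'$ is precisely the hypothesis that rules out $D'$ being a fixed component obstructing the Cremona image, and it is here that the full strength of the numerical hypothesis (as opposed to bare effectivity) is used. A secondary technical point is to make sure that when $d$ is small — say $d=1$ or the borderline case where $\langle D,D\rangle$ could be $\geq 0$ — one still lands in the hypotheses of Theorem~\ref{thm:noether2}; the theorem's extra clause "if $d=1$ further assume $\langle D,D\rangle<0$" is exactly satisfied since $\langle D,D\rangle=-1$, so this causes no trouble, but it is worth flagging explicitly. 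Finally, one should remark (mirroring the $n=2$ proof in \cite{DO}) that it suffices to impose the pairing condition only for lower-\emph{degree} $(-1)$ classes rather than all of them, because the Weyl group acts transitively and degree is strictly reduced at each Cremona step, so the induction is well-founded.
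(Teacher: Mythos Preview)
Your overall strategy---induct on degree, invoke Noether's inequality (Theorem~\ref{thm:noether2}) to find a degree-reducing Cremona, and descend---matches the paper's, but there are two genuine problems, one in each direction.

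\textbf{Forward direction.} Your justification that $\langle D, D'\rangle \geq 0$ for distinct irreducible effective $D, D'$ via ``local intersection contributions'' is valid only for $n=2$. For $n \geq 3$ the Dolgachev--Mukai pairing is \emph{not} a geometric intersection number (nor a positive multiple of $D\cdot D'\cdot H^{n-2}$, which for $n\geq 3$ is just $dd'$). The paper instead proves and uses a Base Locus Lemma (Lemma~\ref{lem:base locus lemma}): if $F$ is a $(-1)$ class with $\langle D, F\rangle < 0$, then $F$ is a fixed component of $D$. This is established via Theorem~\ref{thm:weyl3} by transporting $F$ to some $E_i$ under the Weyl group, where the claim is obvious; it is precisely the ingredient you flag as needing a check but do not supply. (Minor arithmetic: you write $\adeg D = 2$, but \eqref{eq:num} gives $\adeg D = 1$, so $c=-1$, $e=0$ rather than $c=0$, $e=-1$; both values still lie in $[-2,1]$, so Theorem~\ref{thm:noether2} applies regardless.)

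\textbf{Reverse direction.} Here the argument genuinely breaks. You want $\mathrm{cr}(D)$ to inherit the hypothesis ``$\langle\,\cdot\,, D''\rangle\geq 0$ for every $(-1)$ class $D''$ of degree $< \deg(\mathrm{cr}(D))$''. This would require $\langle D, \mathrm{cr}^{-1}(D'')\rangle \geq 0$, but $\mathrm{cr}^{-1}(D'')$ is a $(-1)$ class whose degree may well be $\geq d$---Cremona does not respect the degree ordering---so the original hypothesis on $D$ says nothing about it. The isometry property of $\mathrm{cr}$ is of no help precisely because the pairing condition is degree-restricted. The paper sidesteps this by arguing the \emph{contrapositive}: assume $D$ is not irreducible, write $D = D_1 + D_2$ with $D_1, D_2$ effective, observe $\mathrm{cr}(D) = \mathrm{cr}(D_1) + \mathrm{cr}(D_2)$ is again reducible and of smaller degree, and invoke induction to produce a single $(-1)$ class $F$ with $\langle \mathrm{cr}(D), F\rangle < 0$; then $D' := \mathrm{cr}(F)$ satisfies $\langle D, D'\rangle < 0$. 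Producing one bad class is far easier than verifying that all lower-degree classes are good, and this reformulation is what makes the induction close.
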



The remainder of the paper is organized as follows.
In Section~\ref{s:surfaces} we review Cox rings and \emph{$(-1)$ curves} on blown up projective planes.
In Section~\ref{s:weyl} we review standard Cremona transformations, define the Weyl group, and discuss the Dolgachev-Mukai pairing. 
In Section~\ref{s:-1divisors} we introduce \emph{divisorial $(i)$ classes} on blown up projective spaces in $s$ general points. 
As mentioned above, \ref{s:Noether}, Sections~\ref{s:Nagatacorrespondence}, and \ref{s:irreduc} contain the proofs of Theorems \ref{thm:noether2}, \ref{thm:weyl3}, and \ref{thm:do2}, resp. 
Finally, in Section~\ref{s:MDS} we describe divisorial $(-1)$ classes for Mori Dream Spaces and a relationship between $X_{n,n+3}$ and the moduli space of certain rank two vector bundles over $\PP^1$.


\subsection{Recent work.}

Motivated by the birational geometry of $X_{n,s}$, the Weyl group action on cycles of arbitrary dimension was recently introduced and studied following the first drafts of the current article.
Indeed, the current article has served as the \emph{main example and motivation} in recent development of the theory of Weyl actions on the Chow ring $A^*(X_{n,s})$ of blown up projective spaces in $s$ general points. In particular, two definitions of certain Weyl cycles of dimension $r$ in $X_{n,s}$ were introduced in the literature. 

First, the concept of \emph{$r$-Weyl planes}, was introduced in \cite{dm1} as the Weyl group orbit on linear spaces of dimension $r$ passing through $r+1$ fixed points, while the notion of {\it $r$-Weyl cycles} was introduced in \cite{bdp3} as irreducible components of the intersection of pairwise orthogonal of divisorial $(-1)$ classes as defined in the current article (see also \cite{bdp4}). The orthogonality is considered with respect to the Dolgachev-Mukai bilinear form on the Picard group $\Pic(X_{n,s})$. 

For example, \emph{Weyl surfaces} in $X_{4,8}$, the first nontrivial case of  Weyl cycles, were given two different definitions in \cite{dm1} and \cite{bdp4}.  
In \cite{dm1}, they were first defined  via the Weyl group actions on $2$-planes in $X_{4,8}$ and in \cite{bdp4} as irreducible components of the intersections of orthogonal $(-1)$ divisorial classes. The first approach is geometric but difficult to compute, while the second is algebraic but easier to manipulate. 

It turns out the two definitions of Weyl surfaces in $X_{4,8}$ are equivalent and match the results of Casagrande, et al. in \cite{Cas2} obtained by analyzing the birational geometry of the space $X_{4,8}$. In recent work, in \cite{bdps2} the authors prove that the two concepts of $r$-Weyl cycles coincide in Mori Dream Space of type $X^n_s$. In Section~\ref{s:MDS}, we will show an explicit example of computation of a Weyl surface as intersection of two  divisorial $(-1)$ classes.

The theory of Coxeter groups induces a bilinear form on both the space of divisors and the space of curves on $X_{n,s}$, i.e. on $A^{1}(X_{n,s})$ and on $A^{n-1}(X_{n, s})$ (see \cite{dm3}). In the planer case both bilinear forms coincide with the intersection product on the Picard group, $\Pic(X_{2,s})$. The  bilinear form induced by the Coxeter theory on the Picard group $\Pic(X_{n,s})$ is known in the literature as the Dolgachev-Mukai form (see Notation~\ref{not}).

In the planar case of $X_{2,s}$, Nagata's counterexample on Hilbert's 14th problem motivated his work on the equivalence between $(-1)$ curves in $X_{2,s}$ and $(-1)$ Weyl lines (the planar analogue of $(-1)$-divisorial classes). The current article further generalizes Nagata's correspondence between $(-1)$ curves and $(-1)$ Weyl lines, from the planar case of $X_{2,s}$ to arbitrary dimension $X_{n,s}$. For this, the standard intersection product on the Picard group $\Pic(X_{2,s})$ is replaced by the Dolgachev-Mukai bilinear form on $\Pic(X_{n,s})$ induced by the Coxeter theory. More explicitly, the current article shows that divisorial $(i)$ classes in $\Pic(X_{n,s})$ can be equivalently defined numerically---by a linear and a quadratic invariant induced by Dolgachev-Mukai bilinear form.

On the other hand, the recently developed theory of curves in $X_{n,s}$, closely resembles the theory of divisors exploited in this work, offering a complementary approach in the understanding of the birational geometry of $X_{n,s}$. In particular, Weyl actions on curves were first introduced and computed  in \cite{dm2} and \cite{dm3}.

In \cite{dm2}, it was proved that in Mori Dream Spaces of type $X_{n,s}$, the $(-1)$ \emph{Weyl line} classes can also be defined \emph{numerically} via a linear and a quadratic invariant, coming from self-intersection of the curve class and the intersection of curve class with the anti-canonical curve class via the bilinear form of the Coxeter theory on $A^{n-1}(X_{n,s})$. It turns out that $(0)$ and $(1)$ Weyl \emph{lines} give extremal rays for the cone of movable curves, thus defining faces of the effective cone of divisors for Mori Dream Spaces $X_{n,s}$, (see \cite[Theorem 1.4]{dm2}). Moreover, the infinity of $(1)$ Weyl lines reproves the result of Mukai that $X_{n,s}$ is not  a Mori Dream Space whenever $\langle K_{X_{n,s}}, K_{X_{n,s}}\rangle\leq 0$.

In \cite[Theorem 6.11]{dm2} the authors use Theorem \ref{thm:weyl3} to prove that a $(-1)$ Weyl line and a divisorial $(-1)$ class that are part of a base locus of a general effective divisor $D$ don't intersect. This allows one to create resolutions of singularities of an effective divisor $D$ on $X_{n, s}$, and to prove a Riemann Roch type statement (see \cite[Corollary 6.18]{dm2}) for the Euler characteristics of the line bundle $\mathcal{O}(\tilde{D})$; here $\tilde{D}$ stands for the proper transform of $D$ under the blow up of $(-1)$ curves and linear base locus. 

Furthermore, in \cite[Theorem 6.11]{dm2}, the authors use Theorem \ref{thm:weyl3}  to prove that two divisorial $(-1)$ classes contained in the base locus of an effective divisor $D$ have to be orthogonal with respect to the Mukai pairing. This remark motivates the definition of \emph{Weyl cycles} (see \cite{bdp4}) in $X_{n,s}$ as irreducible components of the intersection of $(-1)$ Weyl divisors on $\Pic(X_{n,s})$ that are pairwise orthogonal with respect to the Dolgachev-Mukai pairing.

It was recently discovered that the birational geometry of Mori dream spaces of type $X_{n,s}$ is completely determined by \emph{Weyl cycles} and the geometric computations of such varieties are described in \cite{bdps1, bdps2}. In particular, for Mori Dream spaces, the Mori chamber decompositions of the effective cone of divisors in $X_{n,s}$, originally computed by Mukai via the birational geometry of moduli spaces, is induced by the Weyl actions on $X_{n,s}$. Moreover, in \cite{bdps1} the authors prove that the stable base locus decomposition, the Mori chamber decomposition for $X_{n,s}$ and the Weyl chamber decomposition coincide for Mori Dream spaces of type $X_{n_s}$.

Most importantly, when $X_{n,s}$ is not a Mori dream space, the identification of its birational geometry is a difficult problem. The work in \cite{bdps1, bdps2} opens new directions of investigation, by the identification of countably many walls determined by the $r$ Weyl cycles. Although the pseudo-effective divisorial cone for $X_{n,s}$ may be difficult to analyze in full generality, it is work in progress to identify a structure theorem similar to the structure theorem for the Mori cone of curves for $X_{n,s}$. In this direction, it is important to mention that the recent work of \cite{sx} uses results of the current article to prove that the pseudo-effective divisorial cone of $X_{3,8}$ is closed, and its extremal rays---the divisorial $(-1)$ classes---accumulate to the anti-canonical divisor $-K_{X_{3,8}}$. The same structure theorem was proved to hold for $X_{5,9}$ using results of the current article---most importantly Theorem~\ref{thm:do2} (see also \cite{ bdps2})---as well as results from \cite{sx}.

Finally, let us mention two more results related to the current work. In \cite{Mukai}, Mukai extended the action of Nagata to products of projective spaces. Let $X_{a,b,c}$ denote the blown up product $\PP^{c-1}\times \ldots \times\PP^{c-1}$ of $a-1$ terms, at $b+c$ general points. Then $\Cox(X_{a,b,c})$ is finitely generated if and only if $\frac{1}{a} + \frac{1}{b}+\frac{1}{c}>1$ (see \cite{CT}, \cite{Mu}). 

\begin{question}\label{quest}
	Can the $(i)$-Weyl divisors or $(i)$-Weyl lines be equivalently characterized numerically  via a linear and quadratic invariant on blown up products of projective spaces of type $X_{a, b, c}$?
\end{question}

Theorem 6.13 of \cite{dm3} proves that Question \ref{quest} holds for  $(i)$-Weyl lines on $X_{3,s}$ (corresponding to case $a=2$, $c=n+1$ and $b=s-n-1$ of $X_{a, b, c}$).

In \cite{totaro} Totaro, discusses Hilbert's fourteenth problem over arbitrary fields, in particular over fields of positive characteristics. For some cases, he relates finite generation of the total coordinate ring, to finiteness of a Mordell--Weil group.

\subsection*{Acknowledgements} The work of Professor Wolfgang Ebeling served as an inspiration for the students and postdoctorands in Hannover, in all directions of Algebraic Geometry especially in Singularity Theory, Birational Geometry or Gromov-Witten Theory. The discussions leading to this paper were initiated by the authors during their postdoctoral time in Hannover, in one of the research seminars on Algebraic Geometry and Complex Geometry that they organized. We kindly dedicate this work to the memory of Wolfgang Ebeling.

The collaboration that resulted in this article was first started at two conferences: ``Crossing Walls in Enumerative Geometry'' held at Snowbird Center in Salt Lake City and ``the Geometry and Physics of Quantum Curves'' held at the BIRS center in Banff. The authors would like to thank the organizers for the inspiring conferences. In addition, the authors are also indebted to BYU and CMU for hosting them. Much of the work for this article was accomplished while visiting these two institutions. The authors would also like to thank MPIM Bonn, IHES Bures-sur-Yvette, RIMS Kyoto, IMAR Bucharest, MPI Leipzig for their hospitality during the long visit of 2023, 2024, 2025 and 2026.

This collaboration was partially supported by the National Science Foundation Grant DMS - 1802082 and DMS 2152130 and by the Collaboration Grants 586691 and  855897 from the Simons Foundation.

\section{On Cox Rings and $(-1)$ curves.}\label{s:surfaces} 

In this section we will discuss what is already know about Cox rings and $(-1)$ curves on rational surfaces. We will begin with Cox rings.

\subsection{Cox Rings}\label{cox}
Recall that if $X$ a projective variety whose Picard group is freely generated by divisors $D_1, \ldots, D_r$, then the Cox ring is defined by 
\[
\Cox(X)=\bigoplus_{(n_1,\ldots, n_r)\in \mathbb{Z}^r}H^0(X, n_1D_1+\ldots+n_rD_r).
\]

Notice $X$ is toric if and only if its Cox ring is polynomial. In \cite{HuKeel}, Hu and Keel proved that a projective variety $X$ is a Mori Dream Space if and only if  $\Cox(X)$ is finitely generated. For example, all del Pezzo surfaces (i.e. the blow up of a projective plane in $s$ points, for $s\leq 8$) are Mori Dream Spaces. In fact  in \cite{TVV} the Cox ring of del Pezzo surfaces is proved to be a quadratic algebra---that is the Cox ring is generated in degree 1 with all relations in degree 2. As further examples, it is known that $X_{3,7}$ and 
$X_{4,8}$ are Mori dream spaces, but the Cox ring is only known for $X_{3,7}$ (see \cite{park}). 

Returning to the discussion in Section \ref{intro}, if we consider $G=\mathbb{G}_{a}^{2}$, then Castravet--Tevelev prove in \cite{CT} that $S^G$ is finitely generated, i.e. $X_{s-3,s}$ is a Mori dream space. 
In Nagata's example, we see that $X_{2,16}$ is not a Mori Dream Space, and via Mukai, the same is true for $X_{5,9}$. 

The connection between Hilbert's fourteenth problem and the blow up of $\PP^n$ is realized via the Cox ring. As mentioned in the Introduction, the example of Nagata 
was generalized 
by Steinberg in \cite{st} for $G=\mathbb{G}_{a}^{6}$, 
again by Mukai in \cite{Mukai} for $G=\mathbb{G}_{a}^{3}$ and blowing up $\PP^5$ in nine points (or more generally for $\dim G=i\geq 3$ and blowing up $s\geq \frac{i^2}{i-2}$ points).

An important class of divisors on $X_{2,s}$ are known as \emph{$(-1)$ curves}. Their importance is illustrated in the following results. 
In \cite{BP} Batyrev and Popov proved that the Cox rings for del Pezzo surfaces are generated by global sections of divisor classes that are $(-1)$ curves for $s\leq 7$, and by $(-1)$ curves together with the anticanonical class for $s=8$.  Moreover, $\Cox(X_{n,s})$ is generated by global sections of  \emph{divisorial $(-1)$ classes} for $s\leq n+3$ (see e.g. \cite{CT}). 

But even when $X_{n,s}$ is not a Mori Dream Space, the $(-1)$-curves play an important role in the geometry of the space. In this paper we prove several results generalizing what is known about $(-1)$ curves (in dimension two) to higher dimension. But first, let us review what is known about $(-1)$-curves.

\subsection{$(-1)$ curves on rational surfaces.}

The theory for $n=2$ is particularly nice. In this section we recall main important results obtained for  $X:=X_{2,s}$ the blown up projective plane in a collection of $s$ general points. The main results of the later sections are in large part inspired by what we know in dimension two, with some important differences, which we will try to point out. 
In this case a divisor is a planar curve. 

We recall the usual intersection pairing on $\Pic(X)$
\[
\cdot: \Pic X\times \Pic X\to \ZZ;
\]
recall that 
\begin{equation}
\begin{split}
H \cdot H &= 1,\\
H\cdot E_i&= 0,\\
E_i \cdot E_j &= -\delta_{i,j}.
\end{split}
\end{equation}

We recall the following definition of $(-1)$ curves on $X$ (see also \cite{nagata1}).
\begin{definition}\label{def:-1 curve} A smooth divisor $D\in \Pic X$ is a \textit{$(-1)$ curve} if $D$ is  irreducible, rational and has self-intersection $D \cdot D=-1$.
\end{definition}

To illustrate the importance of $(-1)$ curves on $X$ we will first recall two important conjectures in the Interpolation Problems area. 

The first of these is a conjecture by Gimigliano-Harbourne-Hirschowitz regarding effective divisors on $X$ (see e.g. \cite{Gim89, Har86, Hir89}). Ciliberto and Miranda proved in \cite{cm} that this is also equivalent to a conjecture of Segre; nowadays this conjecture is known as SGHH conjecture. 

Let $\chi(X, \mathcal{O}_X(D))$ denote the Euler characteristic of the sheaf $\mathcal{O}_X(D)$. This can be computed via Riemann-Roch theorem for divisors on the rational surface $X$.

\begin{align*}
\chi(X, \mathcal{O}_X(D)) &=1+\frac{D\cdot (D-K_X)}{2}\\
 &=\dim H^0(X,\mathcal{O}_X(D))- \dim H^1(X,\mathcal{O}_X(D)) + \dim H^2(X,\mathcal{O}_X(D)).
\end{align*}

\begin{remark}
We further remark that for divisors $D$ of $\Pic(X_{n,s})$ with positive coefficients all the higher cohomology groups vanish, namely $H^i(X,\mathcal{O}_X(D))=0$ for $i\geq 2$. This statement can be easily proved for each such divisor by induction on the multiplicity. Therefore, only nontrivial cohomology for such divisors can arise from $H^0(X,\mathcal{O}_X(D))$ and $H^1(X,\mathcal{O}_X(D))$.
\end{remark}
\begin{conjecture}[Gimigliano-Harbourne-Hirschowitz]\label{conj:GHH} Let $D\in \Pic (X)$ an effective divisor of form \eqref{eq:pic}. Then 
	\begin{equation}
	\chi(X, \mathcal{O}_X(D))=\dim H^0(X,\mathcal{O}_X(D))
	\end{equation}
	if and only if $D \cdot C\geq -1$ for all $(-1)$ curves $C$ on $X$.
\end{conjecture}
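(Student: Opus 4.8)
The plan is to first translate the stated cohomological equality into a non-speciality statement, and then handle the two implications separately: the forward one by an explicit ``stripping'' argument and the reverse one (the deep one) by Cremona reduction to a standard form followed by degeneration. Throughout write $h^i(D):=\dim H^i(X,\mathcal{O}_X(D))$. Riemann--Roch gives $\chi(D)=h^0(D)-h^1(D)+h^2(D)$, and Serre duality identifies $h^2(D)=h^0(K_X-D)$. Since $K_X-D=(-3-d)H+\sum_i(1+m_i)E_i$ has $H$-degree $-3-d<0$, it cannot be effective, so $h^2(D)=0$ for every effective $D$ (this is precisely why the $h^2$ term is absent in \eqref{eq:chi}). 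Consequently $\chi(D)=h^0(D)$ is equivalent to $h^1(D)=0$, and the conjecture becomes the assertion that $\lvert D\rvert$ is non-special if and only if $D\cdot C\geq -1$ for every $(-1)$-curve $C$.

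For the forward implication I would prove the contrapositive. Suppose some $(-1)$-curve $C$ satisfies $D\cdot C=-k$ with $k\geq 2$. Since $C$ is irreducible and $D\cdot C<0$, $C$ is a fixed component of $\lvert D\rvert$, and I strip copies of it using the restriction sequence
\[
0\to \mathcal{O}_X(D-C)\to \mathcal{O}_X(D)\to \mathcal{O}_C(D|_C)\to 0,
\]
where $C\cong\PP^1$ and $\mathcal{O}_C(D|_C)\cong\mathcal{O}_{\PP^1}((D-iC)\cdot C)$ at the $i$-th step. As long as $(D-iC)\cdot C=-k+i<0$, the global sections are unchanged while $\chi$ drops by $(D-iC)\cdot C+1$; peeling off the $k$ copies needed to reach intersection $0$ yields $h^0(D)=h^0(D-kC)$ and $\chi(D-kC)=\chi(D)+k(k-1)/2$. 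Since $D-kC$ is again effective, $h^2(D-kC)=0$, so $h^0(D-kC)\geq\chi(D-kC)$; combining the two equalities forces $h^0(D)\geq\chi(D)+k(k-1)/2$, and as $h^2(D)=0$ this gives $h^1(D)\geq k(k-1)/2>0$. Thus any $(-1)$-curve meeting $D$ in degree $\leq -2$ makes $\lvert D\rvert$ special.

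For the reverse implication I would exploit the Weyl group action. Assuming $D\cdot C\geq -1$ for every $(-1)$-curve, I would use the standard Cremona transformations based at triples of points (the $n=2$ case of Theorem~\ref{thm:weyl3}) to move $D$ into the fundamental chamber, i.e.\ to a representative $D_0$ with $d\geq m_1\geq\cdots\geq m_s\geq 0$ and $m_i+m_j+m_k\leq d$. The hypothesis $D\cdot C\geq -1$ is Weyl-invariant because the Weyl group permutes the $(-1)$-curves, and $\chi$, $h^0$, $h^1$ are preserved under Cremona, so it suffices to prove non-speciality for the reduced $D_0$. For such balanced systems I would degenerate the plane, specializing the points onto a line or a conic in the style of Ciliberto--Miranda and the Horace method, and induct on $d$ and $s$, using the $(-1)$-curve hypothesis to guarantee that the limit linear systems remain non-special.

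The hard part is exactly this reverse direction: it is the open Gimigliano--Harbourne--Hirschowitz conjecture. The Cremona reduction to $D_0$ is routine and Weyl-invariance of the hypothesis is immediate, but proving non-speciality of the reduced systems is not known uniformly. The degeneration arguments close the induction only in restricted ranges (small $s$, or bounded multiplicities), because controlling the limit linear system and the transversality of the specialization becomes delicate as the degree and number of points grow. So while the initial reduction and the entire forward implication are complete, the reverse implication rests on this unresolved analytic-combinatorial control of limit systems, which is the genuine obstacle.
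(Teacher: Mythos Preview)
This statement is labeled a \emph{Conjecture} in the paper, and the paper gives no proof of it whatsoever; it is quoted as the (still open) Segre--Gimigliano--Harbourne--Hirschowitz conjecture, used only to motivate the role of $(-1)$-curves. So there is no ``paper's own proof'' to compare your attempt against.

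Your treatment is appropriate given this status. The forward implication you establish (if some $(-1)$-curve $C$ has $D\cdot C\le -2$ then $h^1(D)>0$) is the classical, known direction, and your stripping argument is correct: with $C^2=-1$ and $C\cdot K_X=-1$ one checks $\chi(D-kC)=\chi(D)+\binom{k}{2}$, while $h^0(D-kC)=h^0(D)$ because each restriction $\mathcal{O}_C((D-iC)|_C)$ has negative degree, and $h^2(D-kC)=0$ since $D-kC$ is still effective of nonnegative $H$-degree; together these force $h^1(D)\ge\binom{k}{2}>0$. This is exactly the computation underlying the paper's Remark~\ref{rem:ghhr}. For the reverse implication you correctly isolate the genuine obstacle: Cremona-reducing $D$ to a standard representative is routine and does preserve $h^0$, $h^1$ and the hypothesis (the Weyl group permutes the $(-1)$-curves by Theorem~\ref{thm:weyl}), but proving non-speciality of the reduced system is precisely the open content of the conjecture, and no uniform degeneration argument is known to close the induction for all $s$ and multiplicities. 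Your proposal is therefore as complete as the current state of the problem allows, and your own final paragraph already says so.
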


In other words, the Euler characteristic is closely connected with the pairing against $(-1)$-curves. 

\begin{remark}\label{rem:ghhr}
	One can formulate Conjecture~\ref{conj:GHH} for any general effective divisor $D$ as follows: Let $C$ be any $(-1)$ curve so that $k_C:= - D \cdot C>0$, then
\[
\dim H^0(X,\mathcal{O}_X(D))= \chi(X, \mathcal{O}_X(D)) + \sum_{C \cdot D<0} \binom{k_C}{2}.
\]
\end{remark}

In order to state the second conjecture, consider a homogeneous divisor $D$, i.e. a divisor that has all multiplicities equal $m_1=\ldots=m_s=m$. For degree and multiplicities large enough, whenever $\frac{d}{m}<\sqrt{s}$, then $\chi(X, \mathcal{O}_X(D))=\tfrac{1}{2}(d^2-sm^2)+\tfrac{3d-sm+2}{2}$ is negative. By symmetry, if $s> 9$ then no $(-1)$ curve can intersect a homogeneous divisor $D$ negatively and by Conjecture~\ref{conj:GHH} one expects that $\dim H^0(X,\mathcal{O}_X(D))=0$. This motivates the following conjecture.

\begin{conjecture}[Nagata]\label{conj:nag} 
	If $\frac{d}{m}<\sqrt{s}$ then $D$ is not effective.
\end{conjecture}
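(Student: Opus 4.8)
The plan is first to acknowledge that this is Nagata's celebrated conjecture, which remains open in general; the one case that can be settled by the kind of degeneration Nagata himself used is that of a perfect square $s=r^2$, and this is exactly the case underlying the counterexample for $X_{2,16}$ described in the introduction. I would therefore prove the statement completely for $s=r^2$ and isolate the non-square case as the genuine obstacle.

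For $s=r^2$ I would reduce the homogeneous hypothesis to the existence of a single nef witness. Specialize the $s=r^2$ general points $p_1,\dots,p_s$ to the $r^2$ transverse intersection points of two general plane curves $C_1,C_2$ of degree $r$; by B\'ezout these are $r^2$ distinct reduced points, so the special configuration is a genuine limit of general configurations. On the blow-up $X'$ at this special configuration the pencil $\langle C_1,C_2\rangle$ becomes base-point free once all its base points are blown up, so the class $F=rH-\sum_{i=1}^{s}E_i$ is base-point free, hence nef, with $F\cdot F=r^2-r^2=0$, as befits a fibre of the induced fibration $X'\to\PP^1$.

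Next I would play $F$ against the specialized divisor $D'=dH-m\sum_{i}E_i$. One computes $D'\cdot F = dr-mr^2 = r(d-rm)$, which is strictly negative precisely under the Nagata hypothesis $d/m<\sqrt{s}=r$. Since $F$ is nef, every effective divisor meets it non-negatively, so $D'$ cannot be effective on the special configuration. Finally, because $h^0$ is upper semicontinuous in flat families, effectivity of $D$ at a general configuration would force effectivity at the special one; as the latter is excluded, $D$ is not effective for general points either, which is the assertion.

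The hard part, and the reason this is only a conjecture, is the non-square case $s\neq r^2$: then $\sqrt{s}$ is irrational, the class $\sqrt{s}\,H-\sum E_i$ is not integral, and there is no pencil of degree $\sqrt{s}$ to supply a nef witness, so the degeneration above has no analogue. One might instead try to descend using the Cremona action of Section~\ref{s:Nagatacorrespondence}: a putative effective homogeneous $D$ has negative self-intersection $d^2-sm^2<0$, and Theorem~\ref{thm:noether2} suggests performing Cremona transformations to lower the degree. However, the standard quadratic transformation sends the homogeneous datum $(d,m)$ toward degree $2d-3m$, which lowers the degree only when $d<3m$, a condition strictly stronger than $d<\sqrt{s}\,m$ once $s\geq 10$, and it destroys homogeneity after one step. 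Controlling such reductions for irrational slope is exactly where no argument is known, so I would present the perfect-square proof as the rigorous content and flag the irrational-slope range as the open obstacle.
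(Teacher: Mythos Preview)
The statement is labeled as a \emph{Conjecture} in the paper, and the paper supplies no proof of it; it only records that Nagata established the case where $s$ is a perfect square and cites \cite{nagata1} and \cite{cm2}. There is therefore no paper proof to compare your proposal against.

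That said, your treatment is appropriate and your perfect-square argument is correct. Specializing the $r^2$ points to the transverse intersection of two general degree-$r$ curves, the class $F=rH-\sum_{i=1}^{r^2}E_i$ is indeed the fibre class of the resulting elliptic (or genus-$\binom{r-1}{2}$) pencil, hence base-point-free and nef with $F^2=0$; the computation $D'\cdot F=r(d-rm)<0$ under the hypothesis $d/m<r$ then forbids effectivity on the special configuration, and upper semicontinuity of $h^0$ transfers this to the general configuration. This is the standard modern proof of the perfect-square case and matches what Nagata's result asserts.

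Your closing discussion of why the irrational-slope case resists the same method is informal but accurate as commentary. One small caution: invoking Theorem~\ref{thm:noether2} there is only suggestive, since that theorem requires $\adeg D$ and $\langle D,D\rangle$ to lie in a narrow range that a homogeneous divisor with $s\ge 10$ generally will not satisfy, so it does not actually give a reduction step; but you present this only as a heuristic obstacle, not as a claimed argument, so there is no error.
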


Nagata proved this conjecture 
when $s$ is a perfect square \cite{nagata1}. In fact, both Conjectures~\ref{conj:GHH} and \ref{conj:nag} are known to be true for homogeneous divisors whenever the number of points, $s$, is a perfect square \cite{cm2}. Conjecture~\ref{conj:nag} also has a more general form for non-homogeneous divisors, but for simplicity we will not discuss it here. 

These conjectures illustrate the importance of $(-1)$ curves on $X$, and we
now describe what is known about $(-1)$ curves.  
In later sections, we will obtain similar results to those mentioned here but in higher dimension.

Let $W_{2,s}$ denote the Weyl group on $X_{2,s}$ generated by reflections (see Section~\ref{ss:weylgroup} for more discussion of this group.)
In 1960 Nagata introduces classes of \emph{pre-exceptional type}, namely the orbit of the Weyl group $W_{2,s}$ action on one exceptional divisor. 
He proves the fundamental result that they are in bijective correspondence with $(-1)$ curves as in the following theorem:

\begin{theorem}[Nagata, \cite{nagata1}]\label{thm:weyl} There exists a bijection between the set of $(-1)$ curves on $X$ and the orbit of the Weyl group on one exceptional divisor, say $W_{2,s} \cdot E_i$. In particular the Weyl group acts transitively on the set of exceptional curves.
\end{theorem}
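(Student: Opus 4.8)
\textbf{Proof proposal for Theorem~\ref{thm:weyl}.}

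The plan is to follow Nagata's original strategy in \cite{nagata1}, adapted to the modern language of this excerpt. Recall that $(-1)$ curves on $X = X_{2,s}$ are irreducible, rational, with $D \cdot D = -1$; by the genus formula this is equivalent to $D \cdot D = -1$ and $D \cdot K_X = -1$, i.e.\ $D \cdot (-K_X) = 1$. The two directions of the bijection are of quite different character. The easy direction is that every element of the orbit $W_{2,s} \cdot E_i$ that happens to be effective is a $(-1)$ curve: the Weyl group acts by isometries of the intersection form and preserves $K_X$, so the numerical conditions $D \cdot D = -1$, $D \cdot (-K_X) = 1$ are automatic; one then needs that such a $D$, if effective, is automatically irreducible and reduced. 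This last point follows because if $D = \sum a_j C_j$ with the $C_j$ irreducible, then $-1 = D \cdot D$ combined with $-K_X$ being nef on a blow-up of $\mathbb{P}^2$ in general points (for $s \le 8$) or, more generally, by an explicit adjunction/positivity argument forces a single component with multiplicity one. So the real content is: (a) every $(-1)$ curve lies in $W_{2,s} \cdot E_i$, and (b) every element of the orbit is in fact effective (hence an honest $(-1)$ curve), which gives transitivity.

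For direction (a), I would argue by induction on the degree $d$ of the $(-1)$ curve $D = dH - \sum m_i E_i$. The base case $d = 0$ forces $D = E_i$ for some $i$ (from the numerical conditions $\sum m_i^2 = 1$, $\sum m_i = 1$). For the inductive step, one shows that after reordering the points so that $m_1 \ge m_2 \ge \cdots \ge m_s$, a standard quadratic Cremona transformation based on $p_1, p_2, p_3$ strictly decreases the degree. This is exactly where the Noether-type inequality enters: one needs $m_1 + m_2 + m_3 > d$, which is the $n=2$ case of Theorem~\ref{thm:noether2} (with the hypotheses $\adeg D = 1$, $\langle D, D\rangle = -1$ falling into the allowed range). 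Applying the Cremona transformation $\sigma$, which is an element of $W_{2,s}$, produces a new divisor $\sigma(D)$ of strictly smaller degree satisfying the same numerical conditions; one must check it remains effective (this is where having $D$ effective and $m_i \ge 0$ is used — cf.\ the remarks about Examples~\ref{eg:ex1} and \ref{eg:NotEffective}), and then invoke the inductive hypothesis to conclude $\sigma(D) \in W_{2,s} \cdot E_j$, whence $D = \sigma^{-1}(\sigma(D)) \in W_{2,s} \cdot E_j = W_{2,s} \cdot E_i$ since the orbit of any exceptional divisor is a single orbit (the simple reflections permuting the $E_i$ are in $W_{2,s}$).

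For direction (b) — that every element $D$ of the orbit $W_{2,s} \cdot E_i$ is effective — I would again induct on degree, running the same Cremona reduction in reverse: given $D$ in the orbit with $d > 0$, reorder and apply a Cremona transformation to reach $D'$ of smaller degree, still in the orbit; by induction $D'$ is effective, and one checks that the inverse Cremona transformation sends effective to effective in this situation (using that the three base multiplicities satisfy the Noether inequality, so no component of the exceptional locus is subtracted with negative coefficient). The base case is $D = E_i$, which is visibly effective. Combining (a) and (b): the map $W_{2,s} \cdot E_i \to \{(-1)\text{ curves}\}$ is well-defined (by (b) plus the easy irreducibility argument) and surjective (by (a)); injectivity is immediate since distinct classes in $\Pic X$ are distinct divisors. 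Transitivity of the Weyl action on $(-1)$ curves then follows because all exceptional divisors $E_1, \dots, E_s$ lie in one orbit.

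The main obstacle is establishing the Noether inequality $m_1 + m_2 + m_3 > d$ that powers every Cremona reduction step — this is precisely the substance of Theorem~\ref{thm:noether2}, and without it the induction on degree has no engine. A secondary subtlety, flagged by the authors' own remarks, is the careful bookkeeping of effectivity under Cremona transformations: one must verify that applying $\sigma$ to an effective divisor satisfying the numerical and ordering hypotheses yields an effective divisor, rather than one with a spurious negative multiplicity, and it is the Noether inequality together with the sign conditions $m_i \ge 0$ that guarantees this.
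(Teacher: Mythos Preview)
Your induction on degree for direction (a) --- reducing via the Noether inequality (Theorem~\ref{thm:noether2} with $c=-1$, $e=0$) and a standard Cremona transformation --- is exactly the paper's strategy; the paper proves the $n$-dimensional generalization Theorem~\ref{thm:weyl3} in Section~\ref{s:Nagatacorrespondence} by precisely this argument, and Theorem~\ref{thm:weyl} is the specialization $n=2$.

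Where you diverge from the paper is in the ``easy'' direction, and there is a genuine gap. Your irreducibility argument for an orbit element $D$ relies on $-K_X$ being nef (or ample), which only holds for $s\le 9$ (ample only for $s\le 8$); for $s\ge 10$ the anticanonical class is not nef and the decomposition $D=\sum a_jC_j$ cannot be ruled out this way. The vague appeal to ``an explicit adjunction/positivity argument'' does not fill this: on $X_{2,s}$ with $s$ large there is no ambient positivity to exploit. The paper's fix (Proposition~\ref{prop:sd}) is both simpler and works uniformly in $s$: since the standard Cremona transformation $\Cr$ is an involution that preserves effectivity, if $\Cr(D)=F+G$ with $F,G$ effective then $D=\Cr(F)+\Cr(G)$ is a nontrivial effective decomposition of $D$. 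Thus irreducibility propagates along the orbit starting from $E_i$, with no positivity hypothesis on $-K_X$ required.

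Your separate induction (b) for effectivity is also unnecessary in dimension two: as the paper notes in Remark~\ref{rem:eff}, any class satisfying the numerical conditions~\eqref{eq:num} on $X_{2,s}$ has $\chi=1$ by Riemann--Roch, hence $h^0\ge 1$ automatically. (This is precisely the step that fails for $n\ge 3$, which is why Definition~\ref{def:-1div} must impose effectivity explicitly; cf.\ Example~\ref{eg:NotEffective}.) So the easy direction reduces to: $E_i$ is a $(-1)$ curve, and the Weyl group preserves the numerical conditions (Theorem~\ref{thm:cc}), effectivity (Remark~\ref{rem:eff}), and irreducibility (the involution argument above).
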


Theorem~\ref{thm:weyl} of Nagata is based on the Lemma of Max Noether which is further exposed in \cite{Dolgachev} (and slightly reformulated in \cite[Lemma 2.2]{DO}). We obtain a similar result for $n\geq 2$ (see Theorem~\ref{thm:weyl3} and the proof in Section~\ref{s:Nagatacorrespondence}).

\begin{theorem}[M. Noether's inequality]\label{thm:noether1} Let $D$ be the class of an irreducible rational curve satisfying $-2\leq D \cdot D\leq 1$. Then there exist $i_1<i_2<i_3$ such that 
\[
m_{i_1}+m_{i_2}+m_{i_3}>d.
\]
\end{theorem}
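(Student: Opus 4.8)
The plan is to reduce the assertion to a purely numerical inequality among the integers $d,m_1,\dots,m_s$ by means of adjunction, and then to derive a contradiction from the assumption that the conclusion fails. I order the multiplicities so that $m_1\ge m_2\ge\dots\ge m_s\ge 0$, and I assume $d\ge 2$; for $d=1$ the only irreducible rational classes are $H$, $H-E_i$, $H-E_i-E_j$, which are treated separately by inspection.

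\emph{Extracting the Noether relations.} Since $D$ is the class of an irreducible rational curve, smooth as a curve on $X_{2,s}$ (as are the $(-1)$- and $(-2)$-curves to which the statement is applied), adjunction gives $p_a(D)=0$, i.e. $-K_X\cdot D=D\cdot D+2$. Writing $K_X=-3H+\sum_iE_i$ and $D=dH-\sum_i m_iE_i$ and using the intersection table, this reads $\sum_{i=1}^s m_i=3d-2-D\cdot D$, while $D\cdot D=d^2-\sum_{i=1}^s m_i^2$ gives $\sum_{i=1}^s m_i^2=d^2-D\cdot D$. Subtracting the two yields the genus identity $\sum_i m_i(m_i-1)=(d-1)(d-2)$, and the hypothesis $-2\le D\cdot D\le 1$ becomes $3d-3\le\sum_i m_i\le 3d$ together with $d^2-1\le\sum_i m_i^2\le d^2+2$.

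\emph{The contradiction.} Suppose $m_1+m_2+m_3\le d$. If $m_3=0$, only $m_1$ and $m_2$ enter the genus identity, so $m_1^2+m_2^2=(d-1)(d-2)+(m_1+m_2)\le d^2-2d+2$, contradicting $\sum_i m_i^2=m_1^2+m_2^2\ge d^2-1$ as soon as $d\ge 2$. If $m_3\ge 1$, the ordering gives $\sum_{i\ge 3}m_i(m_i-1)\le(m_3-1)\sum_{i\ge 3}m_i=(m_3-1)\bigl(\sum_i m_i-m_1-m_2\bigr)$; since $m_3-1\ge 0$ and $\sum_i m_i\le 3d$, the genus identity yields
\[
(d-1)(d-2)\le m_1(m_1-1)+m_2(m_2-1)+(m_3-1)(3d-m_1-m_2),
\]
which simplifies to $d^2+2\le m_1^2+m_2^2-m_3(m_1+m_2)+3m_3d$. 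For fixed $m_1+m_2$ the right-hand side is largest when $m_2=m_3$, and the resulting quantity is non-decreasing in $m_1+m_2$ on the range allowed by $m_1+m_2\le d-m_3$; hence it is at most its value at $m_1+m_2=d-m_3$, $m_2=m_3$, namely $d^2-2m_3d+6m_3^2$. Therefore $2\le 6m_3^2-2m_3d=2m_3(3m_3-d)$, which is $\le 0$ because $3m_3\le m_1+m_2+m_3\le d$ --- a contradiction.

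\emph{Main obstacle.} The only genuinely delicate point is the last paragraph in the case $m_3\ge 1$: one must verify carefully that $m_1^2+m_2^2-m_3(m_1+m_2)$ is, on the constraint region, maximized at the extreme splitting $m_2=m_3$ and on the boundary $m_1+m_2=d-m_3$, and then that the strict inequality survives --- which forces one to keep track of the extremal values $D\cdot D\in\{-2,1\}$, the degenerate case $m_3=0$, and the small value $d=1$. The remaining ingredients, adjunction and the intersection form on $\Pic(X_{2,s})$, are entirely routine.
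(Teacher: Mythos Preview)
The paper does not actually prove Theorem~\ref{thm:noether1}; it is quoted as classical background (with references to \cite{Dolgachev} and \cite{DO}). What the paper proves is the generalization Theorem~\ref{thm:noether2}, whose $n=2$ specialization recovers this statement (with the extra proviso $\langle D,D\rangle<0$ when $d=1$). So the meaningful comparison is against that proof.

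Your argument for $d\ge 2$ is correct and takes a genuinely different route. The paper's proof of Theorem~\ref{thm:noether2} introduces the weighted averages $q_j=\bigl(\sum_{i\ge j}m_i^2\bigr)\big/\bigl(\sum_{i\ge j}m_i\bigr)$, uses $m_j\ge q_j$, and telescopes through a chain of inequalities to get $m_1+\dots+m_{n+1}\ge (n+1)q_1>(n-1)d$. That machinery is designed to work uniformly in $n$. You instead combine the two Noether relations into the single genus identity $\sum_i m_i(m_i-1)=(d-1)(d-2)$ and run a direct two-variable maximization under the constraint $m_1+m_2+m_3\le d$. This is cleaner and more elementary for $n=2$, and the convexity/endpoint analysis you sketch is valid; but it is tailored to three multiplicities and does not obviously extend to the $(n+1)$-term sum needed in higher dimension.

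One genuine gap: your ``treated separately by inspection'' for $d=1$ is too quick, and in fact wrong as stated. For $D=H$ (with $D\cdot D=1$) and $D=H-E_i$ (with $D\cdot D=0$) the conclusion $m_{i_1}+m_{i_2}+m_{i_3}>1$ \emph{fails}. This is exactly why Theorem~\ref{thm:noether2} imposes the additional hypothesis $\langle D,D\rangle<0$ in degree~$1$; Theorem~\ref{thm:noether1} as stated must be read with that convention (equivalently, one excludes the line and the line through one point). You should flag this explicitly rather than sweep it under ``inspection''.
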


The following result (see e.g. \cite[Proposition 2.1]{DO}) provides an alternative definition of $(-1)$ curves. This result was one of the main motivations for the current work. Notice that the irreducibility assumption is not needed for next statement. Let $p_a(D)$ denote the arithmetic genus of the divisor $D$.

\begin{lemma}\label{lem:eq} 
Let $D=dH-\displaystyle\sum_{i=1}^{s} m_i E_i$ be an arbitrary curve class on $X$. Then any two conditions imply other two:
	
\begin{enumerate}
	\item\label{item:-1intersection} $D \cdot D  =-1$. 
	\item $p_a(D)=0$ (i.e. $D$ is a rational curve).
	\item\label{item:-1curveAdeg} $D \cdot (-K_X) =1$.
	\item\label{item:-1euler} $\chi(X, \mathcal{O}_X(D)) = 1$.
\end{enumerate}
\end{lemma}

The proof of this result follows from the adjunction formula 
\begin{equation}\label{eq:agenus}
p_a(D)=\frac{2+D\cdot (D+K_X)}{2}
\end{equation}
and Riemann-Roch theorem for divisors on the rational surface $X$. Notice that the first two conditions together with irreducibility of the divisor define $(-1)$ curves. This leads to the following corollary.

\begin{corollary} A divisor $D\in \Pic X$ is a \textit{$(-1)$ curve} if $D$ is smooth, irreducible on $X$ and any two conditions of Lemma~\ref{lem:eq} hold.
\end{corollary}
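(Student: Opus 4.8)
The statement is an immediate corollary of Lemma~\ref{lem:eq} and Definition~\ref{def:-1 curve}, so the plan is simply to unwind the definitions and invoke the equivalence of conditions already established. Recall that by Definition~\ref{def:-1 curve}, a divisor $D \in \Pic X$ is a $(-1)$ curve precisely when $D$ is smooth, irreducible, rational (i.e.\ $p_a(D)=0$), and has self-intersection $D \cdot D = -1$. Thus it suffices to show that, under the hypotheses that $D$ is smooth and irreducible, the two remaining defining conditions---$D \cdot D = -1$ and $p_a(D) = 0$---are forced whenever \emph{any} two of the four conditions of Lemma~\ref{lem:eq} hold.

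First I would observe that Lemma~\ref{lem:eq} asserts exactly this self-reinforcing structure: among the four numerical conditions (1) $D \cdot D = -1$, (2) $p_a(D) = 0$, (3) $D \cdot (-K_X) = 1$, and (4) $\chi(X, \mathcal{O}_X(D)) = 1$, any two of them imply the other two. The key step, then, is to note that conditions (1) and (2) are among these four, so whichever pair is assumed to hold will force conditions (1) and (2) in particular. Having secured $D \cdot D = -1$ and $p_a(D) = 0$, together with the standing hypotheses of smoothness and irreducibility supplied in the corollary's statement, one recovers precisely the four requirements in Definition~\ref{def:-1 curve}, and therefore $D$ is a $(-1)$ curve. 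Conversely, if $D$ is a $(-1)$ curve then conditions (1) and (2) hold by definition, which is a valid pair, so the biconditional is trivially consistent.

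The only substantive content hiding behind this corollary lives entirely inside Lemma~\ref{lem:eq}, which I may assume as already proved. For completeness I would recall the mechanism of that lemma: the adjunction formula $p_a(D) = \frac{2 + D \cdot (D + K_X)}{2}$ rewrites condition (2) as $D \cdot (D + K_X) = -2$, while the Riemann--Roch formula \eqref{eq:chi} gives $\chi(X, \mathcal{O}_X(D)) = 1 + \frac{D \cdot (D - K_X)}{2}$. Adding and subtracting these two relations expresses the pair $(D \cdot D,\ D \cdot K_X)$ linearly in terms of $p_a(D)$ and $\chi(X, \mathcal{O}_X(D))$, so that any two of the four conditions pin down the values of $D \cdot D$ and $D \cdot K_X$, hence all four conditions.

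The main (and essentially only) obstacle is purely bookkeeping: one must check that the logical closure truly holds for each of the $\binom{4}{2} = 6$ choices of an assumed pair, rather than only for the ``canonical'' pair (1)--(2). Since Lemma~\ref{lem:eq} already guarantees this for all pairs, no genuine difficulty remains, and the corollary follows formally. I would therefore present the proof as a one-line deduction: a smooth irreducible divisor satisfying any two conditions of Lemma~\ref{lem:eq} satisfies all of them, in particular $D \cdot D = -1$ and $p_a(D) = 0$, whence $D$ is a $(-1)$ curve by Definition~\ref{def:-1 curve}.
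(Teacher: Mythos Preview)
Your proposal is correct and matches the paper's treatment: the paper presents this corollary without a separate proof, merely noting that ``this leads to the following corollary'' immediately after Lemma~\ref{lem:eq}, so your one-line deduction from the lemma and Definition~\ref{def:-1 curve} is exactly what is intended.
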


Unfortunately, neither of these last two results is true for $n>2$. It is important to make the following remark

\begin{remark}\label{rem:eff} Any divisor $D$ on the blown up plane $X_{2,s}$, satisfying any two conditions of the set of four conditions of Lemma~\ref{lem:eq} is effective. In particular $(-1)$ curves are effective.
This follows from equality (4) since 
\[
\chi(X, \mathcal{O}_{X}(D))=\dim H^0(X, \mathcal{O}_{X}(D))-\dim H^1(X, \mathcal{O}_{X}(D))=1.
\]
\end{remark}

Let us also mention that although effectivity of $(-1)$ divisors is implied by the definition in dimension two, this is no longer the case in higher dimension. In Example~\ref{eg:NotEffective} we see a divisor satisfying the numerical conditions required for being a divisorial $(-1)$ class, but the curve is not effective, so it won't be in the Weyl group action $W_{n,s}\cdot E_i$. Therefore one needs to introduce effectivity in the definition of divisorial $(i)$ classes!

The following result, stronger than Remark~\ref{rem:eff}, holds only in dimension two. It follows from by property (3) of Corollary~\ref{cor:cre} and property (4) of Lemma~\ref{lem:eq}.

\begin{corollary} If $D$ is a \textit{$(-1)$ curve} then 
	\begin{align*} 
	& \dim H^0(X,\mathcal{O}_X(D))=1\\
	& \dim H^1(X,\mathcal{O}_X(D))=0.
	\end{align*}
	In particular Conjecture~\ref{conj:GHH} holds for $D$.
\end{corollary}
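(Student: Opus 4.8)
The plan is to combine the Euler-characteristic computation already at hand with a single vanishing statement. By Lemma~\ref{lem:eq} a $(-1)$ curve satisfies $\chi(X,\OO_X(D))=1$, and by Remark~\ref{rem:eff} it is effective, so $\dim H^0(X,\OO_X(D))\geq 1$. Since $\chi(X,\OO_X(D))=\dim H^0(X,\OO_X(D))-\dim H^1(X,\OO_X(D))$ by \eqref{eq:chi}, the corollary reduces entirely to proving $\dim H^1(X,\OO_X(D))=0$; once this is known, $\dim H^0(X,\OO_X(D))=1$ follows immediately from $\chi=1$.

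To establish the vanishing I would use the restriction (adjunction) exact sequence
\[
0\to \OO_X\to \OO_X(D)\to \OO_D(D)\to 0,
\]
obtained by twisting the structure sequence $0\to\OO_X(-D)\to\OO_X\to\OO_D\to 0$ by $\OO_X(D)$. The key geometric inputs come straight from Definition~\ref{def:-1 curve}: a $(-1)$ curve is smooth, irreducible and rational, hence $D\cong \PP^1$, and its self-intersection is $D\cdot D=-1$. Together these identify the restricted line bundle with the normal bundle, $\OO_D(D)\cong\OO_{\PP^1}(-1)$, so that both $H^0(\PP^1,\OO_{\PP^1}(-1))$ and $H^1(\PP^1,\OO_{\PP^1}(-1))$ vanish.

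Passing to the long exact sequence in cohomology, I would invoke that $X$ is a rational surface, so $H^0(X,\OO_X)=\CC$ and the irregularity vanishes, $H^1(X,\OO_X)=0$. The relevant portion then reads
\[
0\to \CC\to H^0(X,\OO_X(D))\to 0\to 0\to H^1(X,\OO_X(D))\to 0,
\]
which forces $\dim H^0(X,\OO_X(D))=1$ and $\dim H^1(X,\OO_X(D))=0$ at once, proving the two displayed equalities. For the final assertion, note that every $(-1)$ curve $C$ satisfies $D\cdot C\geq 0$ when $C\neq D$ (two distinct irreducible curves) and $D\cdot D=-1$ when $C=D$, so the hypothesis $D\cdot C\geq -1$ of Conjecture~\ref{conj:GHH} holds; since we have verified $\chi(X,\OO_X(D))=1=\dim H^0(X,\OO_X(D))$, the predicted equality is confirmed for $D$.

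The only genuinely delicate points are the two cohomological inputs: the identification $\OO_D(D)\cong\OO_{\PP^1}(-1)$, which relies precisely on $D$ being a smooth rational curve with $D\cdot D=-1$, and the vanishing $H^1(X,\OO_X)=0$, which is exactly the statement that the rational surface $X$ has irregularity zero. Both are standard facts, so I expect no real obstacle; the substance of the argument is simply arranging the restriction sequence so that the $\OO_X$ and $\OO_D(D)$ contributions cancel, leaving $\OO_X(D)$ with the cohomology of a point.
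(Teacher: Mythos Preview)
Your argument is correct and self-contained: the restriction sequence $0\to\OO_X\to\OO_X(D)\to\OO_D(D)\to 0$, together with $D\cong\PP^1$, $\OO_D(D)\cong\OO_{\PP^1}(-1)$, and the vanishing of the irregularity of $X$, yields both equalities cleanly.

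The paper, however, argues differently. Its proof invokes property~(4) of Remark~\ref{rem:cr} (via Corollary~\ref{cor:cre}), namely that $\dim H^0$ is preserved by every Weyl group element, together with Nagata's Theorem~\ref{thm:weyl}, which places $D$ in the orbit $W_{2,s}\cdot E_i$. This gives $\dim H^0(X,\OO_X(D))=\dim H^0(X,\OO_X(E_i))=1$ directly, and then $\dim H^1=0$ follows from $\chi=1$ (Lemma~\ref{lem:eq}(4)). So the paper's route is ``reduce to an exceptional divisor via Cremona'', in keeping with the paper's overall theme, whereas your route is the standard local computation on a smooth rational $(-1)$ curve. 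Your approach is more elementary and does not depend on the deeper Nagata correspondence; the paper's approach, by contrast, illustrates why the result is specific to dimension two (Cremona preserves $\chi$ only there, cf.\ Theorem~\ref{thm:coh}) and foreshadows the Weyl-group machinery used throughout.
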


In addition, Dolgachev obtains the following result, using the same techniques as Nagata.

\begin{proposition}[Dolgachev \cite{Dolgachev}]\label{prop:dolg1} There are no irreducible, rational curves on $X$ with $D\cdot D=-2$.
\end{proposition}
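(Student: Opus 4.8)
The plan is to follow the classical Nagata--Dolgachev strategy: reduce the degree of $D$ by applying standard Cremona transformations coming from the Weyl group $W_{n,s}$, using the Noether-type inequality (Theorem~\ref{thm:noether2}) as the engine that guarantees a Cremona transformation exists which strictly decreases the degree. First I would set $r\in\{-4,-3,-2\}$ and suppose for contradiction that $D=dH-\sum m_i E_i$ is an irreducible divisor with $\langle D,D\rangle=r$ and $\adeg(D)=-2-r$. Note that $\adeg(D)=-2-r\in\{0,1,2\}$, so in the language of Theorem~\ref{thm:noether2} we are in the regime $\adeg D=c+2$ with $c=-2-r-2=-r-4\in\{-2,-1,0\}$, and $\langle D,D\rangle=r=c+e$ forces $e=r-c=r-(-r-4)=2r+4\in\{-4,-2,0\}$; I would have to check the admissible range, and here the cases $r=-3,-2$ give $e\in\{-2,0\}\subseteq[-2,1]$ cleanly, while $r=-4$ gives $e=-4$ which is outside the stated range, so that case needs separate handling (possibly by first noting such a $D$ cannot be irreducible for elementary reasons, or by reordering the multiplicities so that $d\ge m_i\ge 0$ and re-deriving the inequality directly). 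After normalizing so that $m_1\ge m_2\ge\cdots\ge m_s\ge 0$ and $d\ge m_1$ (using that $D$ is irreducible, it has nonnegative intersection with every exceptional $E_i$ and with the classes $E_i-E_j$, which forces $m_i\ge 0$ and the ordering; irreducibility also gives $d\ge m_1+m_2+\cdots$ in appropriate subsets, hence $d\ge m_i$), Theorem~\ref{thm:noether2} supplies indices $i_1<\cdots<i_{n+1}$ with $m_{i_1}+\cdots+m_{i_{n+1}}>(n-1)d$.

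Next I would apply the standard Cremona transformation $\sigma$ based on those $n+1$ points (an element of $W_{n,s}$). Its effect on the Picard lattice is $d\mapsto d'=nd-\sum_{k=1}^{n+1}m_{i_k}$ and $m_{i_k}\mapsto (n-1)d-\sum_{j\ne k}m_{i_j}$, leaving the other multiplicities fixed; one computes $d' = nd - \sum m_{i_k} < nd-(n-1)d = d$, so the degree strictly drops. Since the Dolgachev--Mukai pairing and the anticanonical degree are $W_{n,s}$-invariant, the transformed divisor $D'=\sigma(D)$ still satisfies $\langle D',D'\rangle=r$ and $\adeg(D')=-2-r$. Moreover, because $D$ is irreducible with negative self-intersection $r<0$ and the only irreducible classes of negative degree are impossible (degree is $\langle D,H\rangle=d\ge 0$ for effective $D$), iterating produces a strictly decreasing sequence of nonnegative integers $d > d' > d'' > \cdots$, which must terminate. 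At termination we reach a class of minimal degree still satisfying the same numerical invariants. I would then run the terminal-case analysis: the base cases are $d=0$ and $d=1$ (the hypothesis of Theorem~\ref{thm:noether2} already singles out $d=1$ with the extra assumption $\langle D,D\rangle<0$, which holds here since $r<0$). For $d=0$ the class is $D=-\sum m_iE_i$ with $\langle D,D\rangle=-\sum m_i^2=r$ and $\adeg D=\tfrac{1}{n-1}\langle D,-K\rangle$; irreducibility of an effective divisor with $d=0$ forces it to be a single $E_i$ (so $r=-1$, contradiction) or shows no such irreducible divisor exists. For $d=1$, $D=H-\sum m_iE_i$ is a hyperplane through some of the points; irreducibility plus the numerical constraints should be shown to be incompatible with $r\in\{-4,-3,-2\}$ by a direct count.

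The main obstacle, as in Dolgachev's original argument, is the terminal/base-case analysis together with verifying that irreducibility really is preserved and really does force the normalization $d\ge m_i\ge 0$ at every stage — in particular, a standard Cremona transformation can fail to preserve effectivity or irreducibility if the chosen $n+1$ points are not "in general position with respect to $D$", so I would need the lemma (presumably available from the earlier sections on the Weyl group and Cremona transformations, cf.\ Corollary~\ref{cor:cre}) that for an irreducible $D$ that is not one of finitely many exceptional classes, $\sigma(D)$ is again irreducible and effective; the honest subtlety is handling the divisors for which this fails, i.e.\ showing directly that none of them has the invariants $(\langle D,D\rangle,\adeg D)=(r,-2-r)$ with $r\in\{-4,-3,-2\}$. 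I also expect the $r=-4$ case to require extra care because it sits just outside the clean range of Theorem~\ref{thm:noether2}; the fallback is to prove the Noether inequality by hand in that boundary case, or to observe that $\langle D,D\rangle=-4$ with $\adeg D=2$ would by the analogue of Riemann--Roch/Lemma~\ref{lem:eq}-type computations force $D$ reducible, contradicting the hypothesis.
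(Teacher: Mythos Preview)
Your approach is essentially the paper's. Note first that the paper does not prove Proposition~\ref{prop:dolg1} directly (it is cited from Dolgachev); the paper instead proves the generalization Theorem~\ref{thm:dol}, and your proposal is visibly aimed at that generalization. That is harmless, since Proposition~\ref{prop:dolg1} is the special case $n=2$, $r=-2$ (rationality plus $D\cdot D=-2$ and adjunction give $\adeg D=0=-2-r$).

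The paper's proof of Theorem~\ref{thm:dol} is the same Nagata--Dolgachev descent you outline, but terser: from irreducibility one has $d\ge m_i\ge 0$, then one iterates Theorem~\ref{thm:noether2} and a standard Cremona exactly ``as in the previous proof'' (of Theorem~\ref{thm:weyl3}) until $w(D)=H-\sum_{i=1}^{t}E_i$; invariance of $\langle\,,\,\rangle$ under $W_{n,s}$ gives $(n-1)-t=r$, hence $t=n-1-r\ge n+1$, and no hyperplane passes through $n+1$ points in general position. Your terminal-case analysis at $d\in\{0,1\}$ is precisely this computation written out.

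Your side-worries are reasonable but orthogonal to the stated proposition. The observation that $r=-4$ forces $e=-4$, outside the hypothesis range of Theorem~\ref{thm:noether2}, is a genuine wrinkle in the paper's one-line proof of Theorem~\ref{thm:dol}, but it is irrelevant for Proposition~\ref{prop:dolg1}, where $r=-2$ gives $(c,e)=(-2,0)$. Preservation of irreducibility under $\Cr$ is exactly the argument in the proof of Proposition~\ref{prop:sd}: if $\Cr D=F+G$ with $F,G$ effective then $D=\Cr F+\Cr G$; and if some multiplicity of $\Cr D$ becomes negative, irreducibility forces $\Cr D=E_i$, which is excluded since $\langle E_i,E_i\rangle=-1\neq r$. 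So for the statement actually at hand your plan goes through without the detours you flag.
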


In \cite{DO} the authors prove that the irreducibility condition for $(-1)$ curves can be replaced by a numerical condition. More precisely, they prove the following:

\begin{theorem}[Dumitrescu--Osserman, \cite{DO}]\label{thm:do} Let $X$ be the blow up of $\mathbb{P}^2$ at very general points $p_1,\ldots, p_s$. A divisor class $D$ is the class of a $(-1)$ curve if and only if either it is one of $E_i$ or it is of the form $dH-m_1E_1-\ldots - m_sE_s$ with $d>0, m_i\geq 0$ for all $i$ so that any two equivalent conditions of Lemma~\ref{lem:eq} hold and moreover for all $0< d'<d$ and all $(-1)$ curves $C$ of degree $d'$ on $X$, we have $D \cdot C\geq 0$.
\end{theorem}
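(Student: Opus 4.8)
The plan is to prove the two implications separately. The forward direction is elementary; the substance lies in the reverse direction, which I would establish by induction on the degree $d=\deg D$ via standard Cremona transformations, essentially following Nagata's reduction strategy, after one preliminary upgrade of the positivity hypothesis. For the forward direction: if $D=E_i$ there is nothing to check; otherwise $D$ is an irreducible curve distinct from every $E_j$, so $d=D\cdot H>0$ and $m_j=D\cdot E_j\ge 0$, and since $D$ is rational we have $p_a(D)=0$, so with $D\cdot D=-1$ all four conditions of Lemma~\ref{lem:eq} hold by that lemma; finally, if $C$ is a $(-1)$ curve of degree $d'$ with $0<d'<d$ then $C\neq D$, so $D\cdot C\ge 0$ because distinct irreducible curves meet non-negatively.

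For the reverse direction, assume $D=dH-\sum m_iE_i$ with $d>0$, $m_i\ge 0$, with two (equivalently, all) of the conditions of Lemma~\ref{lem:eq} satisfied, and $D\cdot C\ge 0$ for every $(-1)$ curve $C$ of degree $d'$ with $0<d'<d$. From the condition $\chi(X,\mathcal{O}_X(D))=1>0$ in Lemma~\ref{lem:eq}, $D$ is effective. The key preliminary step is to upgrade the positivity to: $D\cdot C\ge 0$ for \emph{every} $(-1)$ curve $C\neq D$. Suppose instead $D\cdot C<0$; then $C$ is a component of every effective divisor in $|D|$, so as classes $D=aC+\Delta$ with $a\ge 1$ and $\Delta$ effective, and comparing $H$-degrees gives $\deg C\le d$. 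If $\deg C=0$ then $C=E_i$ and $D\cdot C=m_i\ge 0$, a contradiction; if $0<\deg C<d$ the hypothesis gives $D\cdot C\ge 0$, a contradiction; and if $\deg C=d$ then necessarily $a=1$ and $\Delta=\sum_j b_jE_j$ is a non-negative combination of exceptional classes, in which case writing $C=dH-\sum n_jE_j$ (so $m_j=n_j-b_j$) and expanding $D\cdot D=C\cdot C=-1$ yields $\sum_j b_j(2m_j+b_j)=0$, whence all $b_j=0$ and $D=C$, contradicting $C\neq D$. This proves the upgrade.

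Now I would induct on $d$. If $d=1$, the conditions $\sum m_i^2=2$ and $\sum m_i=2$ force $D=H-E_i-E_j$, a line through two general points, which is an irreducible $(-1)$ curve. For $d\ge 2$: from $\sum m_i^2=d^2+1$ one gets $m_i\le d$ for all $i$, so the numerical form of Max Noether's inequality applies (the $n=2$ instance of Theorem~\ref{thm:noether2}, valid since $\adeg D=1$ and $\langle D,D\rangle=-1$), producing three indices whose multiplicities sum to more than $d$; hence the three largest multiplicities $m_1\ge m_2\ge m_3$ satisfy $m_1+m_2+m_3>d$. Applying the upgraded positivity to the degree-one curves $H-E_a-E_b$ gives $m_a+m_b\le d$ for all pairs, so the standard quadratic Cremona transformation $\sigma_{123}$ carries $D$ to a class $D'=d'H-\sum m_i'E_i$ with all $m_i'\ge 0$ and $0<d'=2d-m_1-m_2-m_3<d$ (the value $d'=0$ is impossible, as then $D'$ would be some $E_i$ and $D=\sigma_{123}(D')$ would have degree at most $1$). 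Because $\sigma_{123}$ is induced by an isomorphism from $X_{2,s}$ onto the blow-up of $\PP^2$ at $p_1,p_2,p_3$ together with the images of the remaining points (again in general position), $D'$ is effective; and because $\sigma_{123}\in W_{2,s}$ preserves the intersection pairing and $K_X$, the class $D'$ satisfies conditions \eqref{eq:num}. Finally, for any $(-1)$ curve $C'$ of degree $<d'$, its image $C=\sigma_{123}(C')$ is again an irreducible $(-1)$ curve (the isomorphic image of one, still a $(-1)$ class by Theorem~\ref{thm:weyl}), with $C\neq D$, so $D'\cdot C'=D\cdot C\ge 0$ by the upgraded positivity. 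Thus $D'$ satisfies the hypotheses of the theorem in degree $d'<d$, so by induction it is an irreducible $(-1)$ curve, and hence so is $D=\sigma_{123}^{-1}(D')$, its image under an isomorphism.

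The step I expect to be the real obstacle is the degree bookkeeping in the inductive step: the hypothesis only controls $D\cdot C$ for $(-1)$ curves of degree strictly less than $d$, whereas the Cremona image of such a curve may acquire degree $\ge d$, so without extra input the induction stalls. The fix is precisely the upgrade in the second paragraph, whose point is that a negatively-meeting $(-1)$ curve would have to appear as an extra component of $D$ and the self-intersection identity $\sum_j b_j(2m_j+b_j)=0$ then prevents this. The remaining points are routine but should be verified: that $\sigma_{123}$ genuinely transfers effectivity and irreducibility (this rests on the fact that the standard quadratic transformation at three of the points lifts to an automorphism of the degree-six del Pezzo surface obtained by blowing them up, and it is here that the "very general" hypothesis is used, to keep the transformed point configuration general), and the handling of the small values $d'\in\{0,1\}$.
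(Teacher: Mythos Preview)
Your proof is correct. Note first that the paper does not itself prove this theorem---it is quoted from \cite{DO}---but the paper does prove the higher-dimensional generalization, Theorem~\ref{thm:do2}, in Section~\ref{s:irreduc}, so that is the natural point of comparison.

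Your argument and the paper's proof of Theorem~\ref{thm:do2} differ in a genuine way. The paper argues the converse \emph{contrapositively}: assuming $D$ is reducible, it Cremona-reduces to a reducible $\overline{D}$ of smaller degree, invokes the induction hypothesis to find a $(-1)$ class $F$ with $\langle \overline{D},F\rangle<0$, and pulls $F$ back through the Cremona to get the offending $D'$. You instead argue \emph{directly}, and the substantive new ingredient is your ``upgrade'' lemma: from the hypothesis $D\cdot C\ge 0$ for $(-1)$ curves of degree $<d$ you deduce $D\cdot C\ge 0$ for \emph{all} $(-1)$ curves $C\neq D$, via the effective decomposition $D=aC+\Delta$ and the identity $\sum_j b_j(2m_j+b_j)=0$ in the equal-degree case. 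This upgrade is exactly what is needed to push the positivity hypothesis through the Cremona transformation, since $\sigma_{123}$ does not respect the degree filtration on $(-1)$ curves. You correctly identified this degree-bookkeeping as the crux, and your resolution of it is clean. The paper's contrapositive route avoids transporting the positivity hypothesis altogether (it transports reducibility instead), at the cost of having to treat separately the case where the Cremona image acquires a negative multiplicity; your route is arguably more transparent about why the theorem is true, while the paper's is more uniform across dimensions. Both rely on the same engine---Noether's inequality to guarantee a degree-decreasing Cremona, and Nagata's Theorem~\ref{thm:weyl} to know that the Weyl group permutes $(-1)$ curves.
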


\begin{example}\label{eg:ex1}
Notice that the last condition is needed. Indeed, as observed by Dumitrescu--Osserman in \cite{DO},	the divisor $D=5H-3E_1-3E_2-E_3-\ldots-E_{10}$ satisfies the numerical conditions \eqref{item:-1intersection} and \eqref{item:-1curveAdeg} in Lemma~\ref{lem:eq}, but fails the last condition of Theorem~\ref{thm:do}, with $C=H-E_1-E_2$. We see that $C$ is a $(-1)$ curve, however $D\cdot C=-1$.  
Moreover, we see that $D$ is not irreducible as $D$ it splits as the sum of two curves $H-E_1-E_2$ and $4H-2E_1-2E_2-E_3-\ldots-E_{10}$. Therefore $D$ is not a $(-1)$ curve. 
\end{example}

\begin{example}
Theorem~\ref{thm:do} essentially says that if $D$ satisfies the numerical conditions of Theorem~\ref{lem:eq}, then we can check irreducibility by intersecting with $(-1)$ curves of smaller degree. For general curves in the blow up $X_{2,s}$, irreducibility cannot be tested by intersection with $(-1)$ curves. The theorem only applies to divisors satisfying the conditions of Lemma~\ref{lem:eq}. Indeed, take a sextic with nine double points $D=6H-2E_1-\ldots-2E_9$ and notice that this curve does not satisfy the requirements of $(-1)$ curve, since $D\cdot D\neq -1$, and it does not have arithmetic genus $p_a(D)\neq 0$. Furthermore, $D$ is not irreducible; it consists of the double cubic passing through the nine points. However, one can check that $D$ does satisfy the numerical criterion $D \cdot C\geq 0$ for all $(-1)$ curves $C$ as in Theorem~\ref{thm:do}. 
\end{example}

\section{Weyl group action on $\Pic(X_{n,s})$ and the Mukai pairing.}\label{s:weyl}

In this section we discuss the Weyl group action and an important class of strongly birational maps on $\PP^n$, called the standard Cremona transformations. 

\subsection{Weyl group action}
The \emph{standard Cremona transformation based at the $n+1$ coordinate points} of $\PP^{n}$ is defined to be the birational map
\[
[x_0, \dots, x_{n}]\rightarrow \bigg[\frac{1}{x_0}, \dots, \frac{1}{x_{n}}\bigg].
\]
This map is given by divisors of degree $n$ with multiplicity $n-1$ at each of the $n+1$ coordinate points.  
The standard Cremona transformation contracts each of the coordinate hyperplanes to a point.
The indeterminacy locus of the standard Cremona transformation consists of the collection of $n+1$ coordinate points, and all linear subspaces of dimension at most  $n-2$ generated by these points. If we lift to $X_{n,n+1}$, we get a strong birational map on $X_{n,n+1}\dashrightarrow X_{n,n+1}$, i.e. an isomorphism in codimension 1.

Moreover it induces an automorphism of the Picard group of $X_{n,s}$
for $s\geq n+1$ points and by abuse of notation we will denote $\Cr: \Pic X_{n,s}\stackrel{}{\rightarrow} \Pic X_{n,s}$
via the rule 
\begin{equation}\label{eq:crem}
\Cr(dH-\sum_{i=1}^{s}m_{i}E_{i})= 
(d-k)H-\sum_{i=1}^{n+1}(m_i-k)E_{i}-\sum_{i=n+2}^{s}m_{i}E_{i} 
\end{equation}
where 
\[
k=m_{1}+\dots+m_{n+1}-(n-1)d
\] 
and the first $n+1$ points are chosen to be the coordinate points of $\PP^{n}$.

Denote the canonical divisor on $X_{n,s}$ by
\begin{equation}\label{eq:canonical}
K_{X_{n,s}}:= -(n+1)H+(n-1)E_1+\dots +(n-1)E_s.
\end{equation}

\begin{remark}\label{rem:cr} The standard Cremona transformation of $\mathbb{P}^n$ (1) is an involution of $\mathbb{P}^n$, that (2) fixes canonical divisor of $X_{n,s}$, and (3) preserves semigroup of effective divisors. Moreover, (4) Cremona transformation preserves dimension of space of global sections of divisors (see \cite{Dumnicki}). These four points are summed up in the following equations. 

\begin{enumerate}
\item\label{item:CrInvolution} $\Cr \Cr D = D$
\item\label{item:CrCanon} $\Cr K_{X_{n,s}}=K_{X_{n,s}}$.
\item\label{item:CrEffective} If $D\geq 0$ then $\Cr D\geq 0$.
\item \label{item:globalsections} $\dim H^0(X_{n,s}, \mathcal{O}(D))=\dim H^0(X_{n,s}, \mathcal{O}(\Cr D))$.
\end{enumerate}
\end{remark}

\begin{remark}\label{rem:ex1} 
By property~\eqref{item:globalsections} of Remark~\ref{rem:cr}, if $D$ is an effective divisor then $\Cr(D)$ is also effective so it has positive degree. 
Indeed, one can also check that \[
\deg\Cr(D)=nd-\displaystyle\sum_{i=1}^{n+1} m_i>0
\]
since $D$ is effective. However, the multiplicities of $\Cr(D)$ may not all be positive. Indeed, 
\[
m_{n+1}-k=(n-1)d-\displaystyle\sum_{i=1}^{n}m_i
\]
can be negative if $D$ is not irreducible---for example, if the hyperplane through first $n$ points is a fixed component of $D$.
\end{remark}

\begin{remark}	
	In \cite{Dolgachev} Dolgachev defines a Cremona isometry to be an automorphism of $A^{1}(X)$ preserving Dolgachev-Mukai intersection pairing (defined in Equation~\eqref{eq:pair}, see also Theorem~\ref{thm:cc})  and Properties~\eqref{item:CrCanon} and \eqref{item:CrEffective} of Remark~\ref{rem:cr}. He further proves that the group of effective Cremona isometries---the ones induced by automorphisms of $X$---is the Weyl group. 
	This property is far from being true if $\dim X\geq 3$.
\end{remark}

We can generalize the map $\Cr$ to include any subset $I\subset \{1,2,\dots , s\}$ of size $n+1$ by precomposing $\Cr$ with a projective transformation, taking the points indexed by $I$ to the $n+1$ coordinate points of $\PP^n$. This transformation is also called a \emph{standard Cremona transformation}, and we denote it by $\Cr_I$. In other words, a standard Cremona transformation is a transformation projectively equivalent to $\Cr$. Obviously, the properties of Remark~\ref{rem:cr} also hold for $\Cr_I$.

For later section it is useful to mention the following result that holds only for blown up planes.

\begin{theorem}\label{thm:coh} For any divisor $D$ on $X=X_{2,s}$ we have the following
\begin{align*} 
& \dim H^0(X, \mathcal{O}(\Cr D))=\dim H^0(X, \mathcal{O}(D))\\
& \dim H^1(X, \mathcal{O}(\Cr D))=\dim H^1(X, \mathcal{O}(D)).
\end{align*}
\end{theorem}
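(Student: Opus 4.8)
The plan is to establish Theorem~\ref{thm:coh} as a direct consequence of the fact that a standard Cremona transformation on $X_{2,s}$ is an \emph{isomorphism} (induced by an automorphism of $X$, at least after permuting the blown-up points generically), together with the behavior of the derived pushforward under the birational morphisms resolving the transformation. First I would recall the classical picture: the standard Cremona transformation $\mathrm{Cr}$ based at three of the points factors as a blow-up of the three coordinate points followed by a blow-down of the strict transforms of the three coordinate lines; on $X_{2,s}$, where those three points and three lines have already been blown up, $\mathrm{Cr}$ is therefore realized by an honest automorphism $\varphi\colon X_{2,s}\to X_{2,s}$ (for very general points the images of the coordinate points are again general, so both source and target are the same abstract surface). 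Under this automorphism $\varphi^*\mathcal{O}(\mathrm{Cr}\,D)\cong\mathcal{O}(D)$ by the definition of the action on $\Pic$ in Equation~\eqref{eq:crem}, and since $\varphi$ is an isomorphism it induces isomorphisms $H^i(X,\mathcal{O}(D))\cong H^i(X,\mathcal{O}(\mathrm{Cr}\,D))$ for all $i$. Taking $i=0$ recovers property~\eqref{item:globalsections} of Remark~\ref{rem:cr} (already cited from \cite{Dumnicki}), and $i=1$ is the new content of the theorem; the $i=2$ equality then follows for free, or alternatively from Serre duality combined with $\mathrm{Cr}\,K_X=K_X$ (property~\eqref{item:CrCanon}).

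Alternatively, and perhaps more self-containedly, I would avoid invoking the automorphism and instead argue through the resolution diagram. Let $Y$ be the common blow-up resolving $\mathrm{Cr}$, with morphisms $\pi_1,\pi_2\colon Y\to X_{2,s}$, where $\pi_2=\pi_1$ composed with the Cremona map. The centers blown up are (the strict transforms of) the three coordinate points, which on $X_{2,s}$ have already been replaced by exceptional curves; blowing them up further and contracting the other rulings is what exchanges the roles. One then checks that $\pi_1^*\mathcal{O}(D)\cong\pi_2^*\mathcal{O}(\mathrm{Cr}\,D)$ — this is exactly the linear-algebra identity Equation~\eqref{eq:crem} pulled back to $\Pic(Y)$ — and that $R^j(\pi_k)_*\mathcal{O}_Y=0$ for $j>0$ with $(\pi_k)_*\mathcal{O}_Y=\mathcal{O}_{X_{2,s}}$, since $\pi_k$ is a composition of point blow-ups of a smooth surface. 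The projection formula then gives $R(\pi_k)_*\pi_k^*\mathcal{O}(\cdot)\cong\mathcal{O}(\cdot)$, and the Leray spectral sequence collapses to yield $H^i(X,\mathcal{O}(D))\cong H^i(Y,\pi_1^*\mathcal{O}(D))\cong H^i(Y,\pi_2^*\mathcal{O}(\mathrm{Cr}\,D))\cong H^i(X,\mathcal{O}(\mathrm{Cr}\,D))$ for all $i$, in particular for $i=0,1$.

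The main obstacle, such as it is, is a bookkeeping one: verifying carefully that the line bundle identity $\pi_1^*\mathcal{O}(D)\cong\pi_2^*\mathcal{O}(\mathrm{Cr}\,D)$ on $Y$ really is encoded by Equation~\eqref{eq:crem}, i.e.\ tracking how the Picard group of $Y$ sits over $\Pic(X_{2,s})$ via the two different contractions and matching the coefficients $d\mapsto d-k$, $m_i\mapsto m_i-k$ against the exceptional divisor classes on $Y$. There is also a minor subtlety about whether one wants $\mathrm{Cr}$ genuinely induced by an automorphism of the \emph{fixed} surface $X_{2,s}$ (true only for suitably general point configurations, and only up to relabelling the points) versus an isomorphism between two a priori different blow-ups; for the cohomological statement this distinction is harmless, since isomorphic varieties have isomorphic cohomology and the point configurations are general by hypothesis. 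I expect the proof in the paper to simply cite the isomorphism of varieties and conclude in one line, since $H^1$ is an isomorphism invariant; the only reason the statement is flagged as "holds only for blown up planes" is that in higher dimension the analogous map is merely birational, not biregular, so cohomology is not preserved — which is precisely the phenomenon the rest of the paper must work around.
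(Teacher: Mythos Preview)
Your proof is correct but follows a genuinely different route from the paper's. The paper argues numerically: by Riemann--Roch on the surface, $\chi(X,\mathcal{O}_X(\Cr D))=1+\tfrac{1}{2}\Cr D\cdot(\Cr D-K_X)$, and since $\Cr$ preserves both the intersection pairing (Theorem~\ref{thm:cc}) and the canonical class (Property~\eqref{item:CrCanon} of Remark~\ref{rem:cr}), this equals $\chi(X,\mathcal{O}_X(D))$. The $H^0$ equality is then taken as already known (Property~\eqref{item:globalsections}, cited from \cite{Dumnicki}), and subtracting gives the $H^1$ statement. Your approach is instead geometric: you observe that on $X_{2,s}$ the Cremona map is resolved to an honest isomorphism, so all cohomology groups are preserved at once. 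Your argument is more conceptual---it explains transparently why the result is special to $n=2$ and yields every $H^i$ simultaneously without appealing to the black-box $H^0$ result---whereas the paper's argument is quicker given the ingredients already assembled in the surrounding text, and dovetails with its emphasis on the Dolgachev--Mukai pairing. One small wrinkle in your second (Leray) variant: on $X_{2,s}$ the three base points are already blown up, so the common resolution $Y$ is just $X_{2,s}$ itself and the diagram collapses to your first argument; there is no ``further'' blow-up needed.
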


\begin{proof}
To prove this result apply Riemann--Roch formula in dimension 2:
\[
\chi(X, \mathcal{O}_X(\Cr D))=1+\frac{\Cr D\cdot (\Cr D-K_X)}{2}=\chi(X, \mathcal{O}_X(D)).
\]
Indeed we will see in Theorem~\ref{thm:cc} that $\Cr$ preserves the intersection pairing (or the Dolgachev-Mukai pairing in higher dimension). Conclude with Properties~\eqref{item:CrCanon} and \eqref{item:globalsections} of Remark~\ref{rem:cr}. 
\end{proof}

\subsection{Root systems and Weyl groups.}\label{ss:weylgroup}
The exposition of this section  follows Dolgachev (see \cite{Dolgachev}) and Mukai (see \cite{Mukai} and \cite{Mu}).
In \cite{manin}, Manin associated the group $E_6=T_{3,2,2}$ to the configuration of 27 lines on a nonsingular cubic surface in $\PP^3$---i.e. the blow up of $\PP^2$ at six points. This result was generalized by Dolgachev for $X_{n,s}$ as we describe below.
For $s\geq n+1$ let $L$ be a lattice of rank $s+1$ with orthogonal basis $H, E_1, \ldots, E_s$. The orthogonal complement of the canonical divisor $K_{X_{n,s}}$ (see Equation~\eqref{eq:canonical}) has basis $\mathcal{B}$ given by
\begin{align*}
\alpha_1:=E_1-E_2, \quad \alpha_2&:=E_2-E_3, \quad\ldots\quad \alpha_{s-1}:=E_{s-1}-E_{s}, \text{ and }\\ 
\alpha_{s}&:=H-\sum_{i=1}^{n+1}E_i
\end{align*}
that becomes a root system for the vector space $V=\Pic(X_{n,s})\otimes_{\mathbb{Z}} \mathbb{R}$. The dual base is $\mathcal{B}^{\vee}=\{\alpha_1^{\vee}, \ldots, \alpha_{s}^{\vee}\}$ in $N_1(X_{n,s})$,  where $\alpha_i^{\vee}=f_i-f_{i+1}$ for $i\leq s-1$ and 
$\alpha_s^{\vee}=(n-1)l-\displaystyle\sum_{i=1}^{n+1}f_i$. 
Here, $f_i$ denotes the class of a line in the exceptional divisor $E_i$ and $l$ a general line class on $X_{n,s}$, so $f_i\cdot E_i=-1$ and $l\cdot H=1$.
Let $T_i:V\stackrel{}{\rightarrow}V$ be the simple reflections for $1\leq i\leq s$ defined by
\[
T_i(x):=x+\alpha_i^{\vee}(x)\cdot \alpha_i.
\]

For any $i<s$, we see that $T_i(E_{i})=E_{i+1}$ and $T_i(E_{i+1})=E_{i}$ and $T_i$ leaves the other bases elements of $\Pic(X_{n,s})$
fixed, while
\begin{equation}\label{eq:weylgenerators}
\begin{split}
T_s(H)&=nH-(n-1)\sum_{i=1}^{n+1} E_i\\ 
T_s(E_j)&=H-\sum_{i\neq j, i=1}^{n+1} E_{i} \quad \text{for } j\leq n+1\\
T_s(E_j)&=E_j \qquad\text{for } j>n+1
\end{split}
\end{equation}

From this description, we can recognize $T_s$ as the automorphism induced by $\Cr$ on $\Pic (X_{n,s})$ as described in \eqref{eq:crem}.
The Dynkin diagram of the group generated by the $T_i$ for $1\leq i\leq s$ is often described as $T_{n+1, s-n-1, 2}$, which denotes a T-shaped graph with three legs of length $2$, $n+1$ and $s-n-1$, resp.

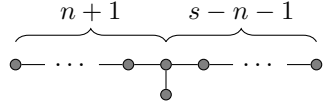
\begin{figure}[h]
	\centering
	\begin{tikzpicture}[xscale=.5,yscale=.4]

\begin{scope}[every node/.style={circle, draw, fill=black!50, inner sep=0pt, minimum width=4pt}]
	\node (n1) at (-4,0) {};
	\node (n3) at (-1,0) {};
	\node (n4) at (0,0) {};
	\node (n5) at (0,-1) {};

	\node (n8) at (1,0) {};
	\node (n10) at (4,0) {};
\end{scope}

	\node (n9) at (2.5,0) {$\dots$};
\node (n2) at (-2.5,0) {$\dots$};
	
	\foreach \from/\to in {n1/n2,n2/n3,n3/n4,n4/n5,n4/n8,n8/n9,n9/n10}
	\draw (\from) -- (\to);

	\draw [decorate,decoration={brace,amplitude=5pt,raise=2ex}]
	(-4,0) -- (0,0) node[midway,yshift=2em]{$n+1$};
		\draw [decorate,decoration={brace,amplitude=5pt,raise=2ex}]
	(0,0) -- (4,0) node[midway,yshift=2em]{$s-n-1$};
	\end{tikzpicture}
	\caption{Dynkin diagram for $T_{n+1,s-n-1,2}$}
\end{figure}

The construction of Dolgachev was generalized by Mukai in \cite{Mu} for products of projective spaces $X_{a,b,c}$ whose corresponding root systems are comprised of $a+b+c-2$ vertices representing a basis for the vector space $\Pic(X_{a,b,c})\otimes_{\mathbb{Z}} \mathbb{R}$. The Dynkin diagram in this case is $T_{a,b,c}$, which has the shape of  a ``T'' and with the three legs having length $a$, $b$ and $c$, resp.

\begin{definition}
The \emph{Weyl group} $W_{n,s}$  is defined to be the group generated by all simple reflections $T_i$ on $X_{n,s}$ where $1\leq i\leq s$.
\end{definition}

\begin{remark}\label{rem:composition}
Any element of the Weyl group $w\in W_{n,s}$ is a composition of standard Cremona transformations based at arbitrary subsets of $n+1$ points of $\{1,\ldots, s\}$. In other words, there exist index subsets $I_1,\ldots, I_t$, where $I_j\subset \{1,\ldots, s\}$ and $|I_j|=n+1$, so that
\[
w=\Cr_{I_1}\circ \Cr_{I_2} \circ \ldots \circ \Cr_{I_t}.
\]

Obviously the inverse of $w$ in the Weyl group $W_{n,s}$ is
\[
w^{-1}=\Cr_{I_t}\circ \Cr_{I_{t-1}} \circ \ldots \circ \Cr_{I_1}.
\]
\end{remark}

\begin{corollary}\label{cor:cre}
Properties~\eqref{item:CrCanon}, \eqref{item:CrEffective} and \eqref{item:globalsections} of Remark~\ref{rem:cr} hold for any Weyl group element $w\in W_{n,s}$.
\end{corollary}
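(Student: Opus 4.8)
The plan is to deduce the corollary from Remark~\ref{rem:cr} by propagating the three properties one transformation at a time along a Cremona decomposition. First I would apply Remark~\ref{rem:composition} to write an arbitrary $w\in W_{n,s}$ as a finite composite of standard Cremona transformations,
\[
w = \Cr_{I_1}\circ \Cr_{I_2}\circ \cdots \circ \Cr_{I_t},
\]
with each $I_j\subset\{1,\dots,s\}$ of size $n+1$. As was observed immediately after $\Cr_I$ was introduced, every factor $\Cr_{I_j}$ already satisfies properties \eqref{item:CrCanon}, \eqref{item:CrEffective} and \eqref{item:globalsections} of Remark~\ref{rem:cr}. So the task reduces to checking that each of these three properties is preserved under composition, which I would do by induction on $t$.

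For \eqref{item:CrCanon} the induction is immediate: $w K_{X_{n,s}} = \Cr_{I_1}\circ\cdots\circ\Cr_{I_{t-1}}\bigl(\Cr_{I_t}K_{X_{n,s}}\bigr) = \Cr_{I_1}\circ\cdots\circ\Cr_{I_{t-1}}\bigl(K_{X_{n,s}}\bigr)$, and one repeats. For \eqref{item:CrEffective} I would feed an effective $D$ through the composite from the right: $\Cr_{I_t}D\geq 0$, hence $\Cr_{I_{t-1}}\circ\Cr_{I_t}(D)\geq 0$, and after $t$ applications $wD\geq 0$. For \eqref{item:globalsections} I would apply the equality of $h^0$ from Remark~\ref{rem:cr} successively to the partial composites $D_j := \Cr_{I_{j+1}}\circ\cdots\circ\Cr_{I_t}(D)$, obtaining $\dim H^0(X_{n,s},\mathcal{O}(wD)) = \dim H^0(X_{n,s},\mathcal{O}(D_1)) = \cdots = \dim H^0(X_{n,s},\mathcal{O}(D))$.

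I do not anticipate a real obstacle here; the only points worth a sentence are that Remark~\ref{rem:composition} supplies exactly the decomposition we need, and that if one prefers to argue directly from the simple generators $T_i$ rather than from the $\Cr_I$, the permutation generators $T_i$ for $i<s$ must be handled as trivial base cases — but these merely relabel the exceptional classes $E_i$, so they visibly fix $K_{X_{n,s}}$, send effective divisors to effective divisors, and preserve $h^0$. The same argument, applied to the decomposition of $w^{-1}$ that is also furnished by Remark~\ref{rem:composition}, shows in addition that the implication in \eqref{item:CrEffective} is in fact an equivalence, although we will not need this.
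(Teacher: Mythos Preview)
Your proposal is correct and matches the paper's (implicit) approach: the paper states the corollary immediately after Remark~\ref{rem:composition} without a written proof, clearly intending it as an obvious consequence of decomposing $w$ into standard Cremona transformations and applying Remark~\ref{rem:cr} to each factor. Your write-up simply spells this out, and your side remark about handling the permutation generators $T_i$ ($i<s$) separately is a sensible precaution in case one is uneasy about whether the $\Cr_I$ alone generate all of $W_{n,s}$.
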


The group of \textit{birational automorphisms} of a projective space $\mathbb{P}^n$ is called  Cremona group (see e.g. \cite{Dolgachev}). 




\subsection{Properties of the Dolgachev-Mukai pairing.}\label{s:pairing}

We introduce a pairing on Picard group $\Pic(X_{n,s})$ following \cite{Mu} (recall the description of $\Pic(X_{n,s})$ in \eqref{eq:pic}):
\[
\langle , \rangle: \Pic X_{n,s}\times \Pic X_{n,s}\to \ZZ.
\]
The pairing has a simple description:
\begin{equation}\label{eq:pair}
\begin{split}
\langle H, H\rangle &= n-1,\\
\langle H, E_i\rangle &= 0,\\
\langle E_i, E_j\rangle &= -\delta_{i,j}.
\end{split}
\end{equation}

By B\'ezout theorem for $n=2$, the Dolgachev-Mukai pairing $\langle C, F \rangle=C \cdot F$ coincides with the intersection of two general divisors (curves) $C$ and $D$ on $X$.

\begin{definition}
For a divisor $D=d_1H-\displaystyle\sum_{i=1}^sm_iE_i\in \Pic(X_{n,s})$ denote by
$\widetilde{D}:=d_1H -d_1E_0- \displaystyle\sum_{i=1}^s m_iE_i\in \Pic(X_{n+1,s+1})$ to be the cone over $D$  with vertex at the exceptional divisor denoted by $E_0$. This cone consists by the union of all lines through $E_0$ and points of $D$. Part (1) of next Theorem was also observed by Laface and Ugaglia in \cite{LUst}.
\end{definition}

\begin{theorem}\label{thm:cc} The following two statements hold: 
	
\begin{enumerate} 
\item\label{item:CrIntersection} The Cremona transformation on $X_{n,s}$ preserves the Dolgachev-Mukai pairing and the anticanonical degree of divisors.
\item\label{item:CrCones} Cones in $X_{n+1,s+1}$ over divisors in $X_{n,s}$ with the same vertex set preserve the intersection pairing Dolgachev-Mukai and the anticanonical degree of divisors.
\end{enumerate}
\end{theorem}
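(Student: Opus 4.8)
The plan is to verify both statements by direct computation on the generators of the relevant Picard groups, using the explicit formulas already recorded in the excerpt. For part \eqref{item:CrIntersection}, recall that the Cremona transformation $\Cr$ coincides with the simple reflection $T_s$ of the Weyl group (as noted after Equation~\eqref{eq:weylgenerators}), and more generally $\Cr_I$ is a reflection in the root $\alpha$ associated to $I$. A reflection in a root $\alpha$ with $\langle \alpha,\alpha\rangle \neq 0$ is an isometry of the ambient pairing precisely when it has the form $x \mapsto x - \frac{2\langle x,\alpha\rangle}{\langle\alpha,\alpha\rangle}\alpha$; so the natural first step is to identify the root $\alpha_s = H - \sum_{i=1}^{n+1}E_i$, compute $\langle \alpha_s,\alpha_s\rangle = (n-1) - (n+1) = -2$, and check that the formula \eqref{eq:crem} for $\Cr$ is exactly this reflection. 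Concretely, I would plug $H$ and each $E_i$ into \eqref{eq:weylgenerators} and confirm $\langle T_s(H),T_s(H)\rangle = n-1$, $\langle T_s(E_j),T_s(E_j)\rangle = -1$, and all cross terms match the values in \eqref{eq:pair}; since $\langle\,,\,\rangle$ is bilinear this suffices. The statement about anticanonical degree then follows immediately from Property~\eqref{item:CrCanon} of Remark~\ref{rem:cr}: $\langle \Cr D, -K\rangle = \langle \Cr D, -\Cr K\rangle = \langle \Cr D, \Cr(-K)\rangle = \langle D, -K\rangle$, where the last equality is the isometry property just established. (Alternatively one checks $\adeg$ directly from the definitions $d(n+1) - \sum m_i$ and the transformation rule.)

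For part \eqref{item:CrCones}, I would compute directly with the coning map $D \mapsto \widetilde{D}$. Given $D = d_1 H - \sum_{i=1}^s m_i E_i$ and $D' = d_1' H - \sum_{i=1}^s m_i' E_i$ on $X_{n,s}$, the cones are $\widetilde{D} = d_1 H - d_1 E_0 - \sum m_i E_i$ and $\widetilde{D'} = d_1' H - d_1' E_0 - \sum m_i' E_i$ in $X_{n+1,s+1}$. Using the pairing \eqref{eq:pair} on $X_{n+1,s+1}$ — where now $\langle H,H\rangle = n$ — one gets
\[
\langle \widetilde{D},\widetilde{D'}\rangle = n\, d_1 d_1' - d_1 d_1' - \sum_{i=1}^s m_i m_i' = (n-1) d_1 d_1' - \sum_{i=1}^s m_i m_i' = \langle D,D'\rangle,
\]
where the contribution $-d_1 d_1'$ comes from the $E_0$-coefficients. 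For the anticanonical degree, note $-K_{X_{n+1,s+1}} = (n+2)H - n E_0 - n\sum_{i=1}^s E_i$, so $\langle \widetilde{D}, -K_{X_{n+1,s+1}}\rangle = n(n+2)d_1 - n d_1 - n\sum m_i = n\big((n+1)d_1 - \sum m_i\big)$; dividing by $(n+1)-1 = n$ recovers $(n+1)d_1 - \sum m_i = (n-1)\adeg(D)$ up to the rescaling convention, i.e. $\adeg(\widetilde{D}) = \adeg(D)$ after accounting for the $1/(n-1)$ versus $1/n$ factors. I should be careful here to state precisely which normalization of $\adeg$ is being preserved, since the rescaling factor changes with $n$; the cleanest formulation is that the \emph{unrescaled} intersections satisfy $\langle \widetilde D, -K_{X_{n+1,s+1}}\rangle = \frac{n}{n-1}\langle D, -K_{X_{n,s}}\rangle$, which gives equality of the rescaled $\adeg$.

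The computations are all elementary, so I do not anticipate a genuine obstacle; the only subtlety is bookkeeping. For \eqref{item:CrIntersection} the main point to get right is the claim that $\Cr$ acts as a \emph{reflection} — that is, that the shift $k = m_1 + \cdots + m_{n+1} - (n-1)d$ equals $-\langle D,\alpha_s\rangle$ (up to sign), so that \eqref{eq:crem} matches $x \mapsto x - \frac{2\langle x,\alpha_s\rangle}{\langle\alpha_s,\alpha_s\rangle}\alpha_s = x + \langle x,\alpha_s\rangle\,\alpha_s$. Indeed $\langle D,\alpha_s\rangle = (n-1)d - \sum_{i=1}^{n+1}m_i = -k$, and adding $-k\,\alpha_s = -k H + k\sum_{i=1}^{n+1}E_i$ to $D$ yields exactly the right-hand side of \eqref{eq:crem}. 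Once this identification is in place, both assertions of part \eqref{item:CrIntersection} are one-line consequences of general properties of reflections together with Remark~\ref{rem:cr}\eqref{item:CrCanon}. For \eqref{item:CrCones} there is essentially nothing to watch except the normalization issue flagged above. I would present the proof as: (i) identify $\Cr$ with the reflection $T_{\alpha_s}$ and invoke invariance of a bilinear form under reflections; (ii) deduce anticanonical invariance from $\Cr K = K$; (iii) for cones, carry out the two displayed bilinear-form computations above.
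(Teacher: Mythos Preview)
Your proof is correct. For part~\eqref{item:CrCones} you do exactly what the paper does: the direct bilinear-form computation $\langle\widetilde D,\widetilde{D'}\rangle = n d_1 d_1' - d_1 d_1' - \sum m_i m_i' = \langle D,D'\rangle$ and the analogous line for $\adeg$. Your worry about normalization is misplaced, though: since $\adeg(D) = (n+1)d_1 - \sum m_i$ already (the $(n-1)$ in the denominator of the definition cancels the $(n-1)$ coming from $\langle H,H\rangle$ and from the $E_i$-coefficients of $K$), the quantity $(n+1)d_1 - \sum m_i$ equals $\adeg(D)$ on the nose, not $(n-1)\adeg(D)$. So the computation is even cleaner than you feared, and the paper simply writes $\adeg\widetilde D = (n+2)d_1 - d_1 - \sum m_i = (n+1)d_1 - \sum m_i = \adeg D$.

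For part~\eqref{item:CrIntersection} you take a genuinely different route from the paper. The paper expands $\langle \Cr D,\Cr F\rangle$ by brute force from the explicit formula~\eqref{eq:crem}, collects the $k_1,k_2$ cross terms, and observes they cancel. Your approach---recognizing that $\Cr$ is the reflection $x\mapsto x + \langle x,\alpha_s\rangle\,\alpha_s$ in the $(-2)$-root $\alpha_s$ and invoking the general fact that such reflections are isometries---is more conceptual and arguably more illuminating, since it explains \emph{why} the pairing is preserved rather than merely verifying it. It also extends immediately to every $\Cr_I$ and hence to the full Weyl group without further computation. The paper's direct expansion has the minor advantage of being entirely self-contained (no appeal to the reflection formalism), but your argument is shorter and connects better with the Weyl-group framework of Section~\ref{ss:weylgroup}. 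Both approaches handle $\adeg$ identically, via $\Cr K = K$.
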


\begin{proof} For \eqref{item:CrIntersection}, let $\displaystyle D:=d_1H - \sum_{i=1}^s m_iE_i$ and $\displaystyle F=d_2H-\sum_{j=1}^s p_jE_j$ be divisors on $X_{n,s}$ with $m_i, p_i\geq 0$. 
From our description of the Cremona action \eqref{eq:crem} we see
\begin{align*}
\Cr D&:= (d_1-k_1) H - \sum_{i=1}^{n+1} (m_i-k_1)E_i - \sum_{j=n+2}^s m_j E_j\\
\Cr F&:= (d_2-k_2) H - \sum_{i=1}^{n+1} (p_i-k_2)E_i - \sum_{j=n+2}^s p_j E_j 
\end{align*}
where $k_1:=m_1+\ldots+ m_{n+1}- (n-1)d_1$ and $k_2:=p_1+\ldots+ p_{n+1}- (n-1)d_2$. Then
\begin{align*}
\langle \Cr D, \Cr F \rangle & = (n-1)(d_1-k_1)(d_2-k_2)- \sum_{i=1}^{n+1} (m_i-k_1)(p_1-k_2) - \sum_{j=n+2}^s m_jp_j\\
&= \langle D, F \rangle - k_1[k_2+(n-1)d_2 - \sum_{j=1}^{n+1} p_j]-k_2[k_1+(n-1)d_1-\sum_{i=1}^{n+1} m_i]\\
&= \langle D, F \rangle.
\end{align*}

By Property~\eqref{item:CrCanon} of Remark~\ref{rem:cr}, the canonical divisor $-K_{X_{n,s}}$ is invariant under the Cremona action. (See Equations~\eqref{eq:canonical} and \eqref{eq:crem} for descriptions of the canonical dvisior and $\Cr$, resp.)
\[
\Cr K_{X_{n,s}}=K_{X_{n,s}}.
\]

We conclude
\begin{align*}
\adeg (D): &=\frac{\langle D, - K_{X_{n,s}} \rangle}{n-1} \\
&=\frac{\langle \Cr D, - \Cr K_{X_{n,s}} \rangle}{n-1} \\
&=\frac{\langle \Cr D, -  K_{X_{n,s}} \rangle}{n-1} \\
&=\adeg (\Cr(D)).
\end{align*}

To prove \eqref{item:CrCones}, take $\displaystyle \widetilde{D}:=d_1H -d_1E_0- \sum_{i=1}^s m_iE_i$ and
$\displaystyle\widetilde{F}=d_2H- d_2E_0-\sum_{j=1}^s n_jE_j$ cones in $X_{n+1, s+1}$  over divisors $D$ and $F$. 
\begin{align*}
\langle  \widetilde{D}, \widetilde{F} \rangle & = nd_1d_2- d_1d_2-\sum_{i=1}^{s} m_in_i \\
&=(n-1)d_1d_2- \sum_{i=1}^{s} m_in_i\\
&= \langle D, F \rangle\\
\adeg\widetilde{D} & = (n+2) d_1 - d_1-\sum_{i=1}^s m_i\\
&=(n+1)d_1-\sum_{i=1}^s m_i\\
&=\adeg(D).
\end{align*}
\end{proof}

\section{Divisorial $(i)$ classes on blown up projective space.}\label{s:-1divisors}

In this section we define divisorial $(i)$ classes we will be studying, and prove some preliminary results. We also give examples of what we call sporadic divisors. 

We are now prepared to make the following definition.  

\begin{definition}\label{def:-1div}
 Let $i\in \{-1,0, 1\}$ and $D\in \Pic X_{n,s}$ be a  divisor

\begin{enumerate}
\item We say $D$ is a \emph{divisorial $(i)$ class} on $X_{n,s}$ if $D$ is an effective and irreducible divisor (possibly non-reduced) that satisfies the following two conditions:
\begin{enumerate}
	\item $\langle	D,D \rangle= i,$ \label{item:selfint}
	\item $\adeg(D)=	\frac{1}{n-1}\langle D,-K_{X_{n,s}}\rangle = 2+i.$ \label{item:adeg}
\end{enumerate}
\item We say  $D$ is a {\it $(i)$ Weyl divisor} if there exists $w\in W_{n,s}$ such that $D=w(H_{n-1-i})$, where $H_{n-1-i}$ is a hyperplane passing through $n-1-i$ points.

\end{enumerate}
\end{definition}

For two examples, we will show in Lemma~\ref{lem:exceptional} that $E_j$ is a divisorial $(-1)$ class for $1\leq j \leq s$, and that a general hyperplane $H_{n-1-i}$ passing through $n-1-i$ points is a divisorial $(i)$ class.  

\begin{remark} Conditions \eqref{item:selfint} and \eqref{item:adeg} in the previous definition are a restatement of Equation~\eqref{eq:num} in the Introduction. In \cite[Definition 4.1]{LUst}, Laface and Ugaglia define \emph{$(-1)$ classes} to be divisors $D$ that are effective, \textit{reduced} and irreducible satisfying Conditions \eqref{item:selfint} and \eqref{item:adeg}. This is equivalent to Definition~\ref{def:-1div}, when $i=-1$. However, Lemma~\ref{reduced} proves that the reducibility assumption---which does not appear in this definition---
is redundant. In \cite{park} Park and Lesieutre briefly relate $(-1)$ divisors to effective divisors satisfying conditions \eqref{item:selfint} and \eqref{item:adeg}.  
\end{remark}

\begin{lemma}\label{reduced} Let $i\in \{-1,0,1\}$ and $D\in \Pic X_{n,s}$ be an irreducible divisor 
satisfying $\langle	D,D \rangle= i$ and $\adeg D=2+i$. Then $D$ is reduced.
\end{lemma}

\begin{proof}
Assume $D$ is a non reduced irreducible divisor satisfying $\langle	D,D \rangle= i$. Assume, for some positive integer $m>1$, we have
\[D=mF.\]

We obtain
\[
\langle	D,D \rangle= \langle	mF,mF \rangle= m^2 \cdot\langle	F,F \rangle= i.
\]
For $i\in\{-1,1\}$ we obtain  a contradiction, since $\langle	D,D \rangle$ is an integer and $m>1$.

Assume now $i=0$, then $\langle	D,D \rangle= 0$ implies $\langle	F,F \rangle= 0$. Now $\adeg D=2$ implies that $m\cdot \adeg F=2$; but the condition $m>1$ implies $m=2$,  i.e. $\adeg F=1$. A divisor with these numerical properties cannot exist, by Corollary \ref{coro}. We obtain a contradiction.
\end{proof}

The next example shows that unlike the case of $\PP^2$ (see Remark~\ref{rem:eff}), the numerical conditions of Definition~\ref{def:-1div} are not enough to guarantee effectivity. 
(Also see Definition~\ref{def:-1 curve}.) 
This shows why the effectivity hypothesis is needed in the definition. 

\begin{example}\label{eg:NotEffective} Take $i\in \{-1,0,1\}$ and $D:=10H- 7E_1-6E_2-6E_3 - 6E_4-6E_5-E_6-\ldots-E_{12-i}$ in $\Pic(X_{3, 12-i})$. Then $D$ satisfies numerical conditions of a  divisorial $(i)$ class since
\begin{align*}
&\langle D, D \rangle =(3-1)\cdot 10^2-7^2-4\cdot 6^2-(7-i)=i\\
&\adeg D=\frac{\langle D, - K_{X_{3,13}} \rangle}{2} =4\cdot 10-(38-i)=2+i
\end{align*}

Note that in $\Pic(X_{n,s})$ any effective divisor satisfies condition $\displaystyle nd \geq \sum_{j=1}^{n+2} m_j$ for any index set of size $n+2$ of $\{1,\ldots, s\}$ while $d\geq m_j$ for any $1\leq j\leq s$. We see that $D$ is not effective, since $3\cdot 10<7+6+6+6+6$ (even though $d\geq m_j$).
\end{example}

We also emphasize that Lemma~\ref{lem:eq} doesn't hold on $X_{n,s}$. Notice that the numerical conditions of Definition~\ref{def:-1div} are conditions \eqref{item:-1intersection} and \eqref{item:-1curveAdeg} of Lemma~\ref{lem:eq}. We now present an example of a divisor satisfying numerical conditions of Definition~\ref{def:-1div} (in fact we will see by Theorem~\ref{thm:weyl3} that $D$ is a divisorial $(-1)$ class) that does not satisfy $\chi(X, \mathcal{O}_{X}(D))=1$ (condition~\eqref{item:-1euler} of Lemma~\ref{lem:eq}). Moreover, standard Cremona transformations do not preserve higher cohomology groups, and therefore also do not preserve the Euler characteristics of divisors (as in Theorem~\ref{thm:coh}).

\begin{example}\label{eg:ex4} 
Consider the following divisor in $X=X_{4,7}$ in the exception list of the celebrated Alexander--Hirschowitz Theorem 
\[
D:=3H-2E_1-\ldots-2E_7.
\]

It is easy to see that $D$ satisfies numerical conditions \eqref{eq:num}. In fact, $D$ is in the Weyl group orbit of an exceptional divisor $W_{4,7}\cdot E_3$, 
therefore by Theorem~\ref{thm:weyl3}, it is effective and irreducible so $D$ is a divisorial $(-1)$ class. To see this consider the following sets of indices: $I_1=\{1,2,3,4,5\}$, $I_2=\{1,2,3,6,7\}$ and $I_3=\{3,4,5,6,7\}$. One can check that $D=\Cr_{I_1}\Cr_{I_2}\Cr_{I_3}E_3$. 
Moreover, from Property~\ref{item:globalsections} of Remark~\ref{rem:cr} we see that
\[
\dim H^0(X, \mathcal{O}_{X}(D))=1.
\]

Notice that
\[
\chi(X, \mathcal{O}_{X}(D))=\binom{7}{4}-7\binom{5}{4}=0
\]
implying that
\[
\dim H^1(X, \mathcal{O}_{X}(D))=0.
\]

We conclude that Theorem~\ref{thm:coh} also holds only in the planar case.

\end{example}

Finally, let us make one remark about rationality---the last remaining of the four conditions in Lemma~\ref{lem:eq}. In the planar case the rationality of $(-1)$ curves follows from the Adjunction formula as explained in Lemma~\ref{lem:eq}. However, in higher dimension a numerical criterium for rationality is difficult to find. For example, it was proved by Castelnuovo that any complex surface with the property that both the irregularity and second plurigenus vanish is rational. 
This criterium is used in the Enriques-Kodaira classification to identify the rational surfaces. In this paper we will not address the rationality question of divisorial $(i)$ classes.

The notion of \emph{$(-1)$ divisors} has been defined previously by Mukai. Our definition is more restrictive when $i=-1$. In order to state Mukai's definition, recall a strong birational map (or pseudo-isomorphism) is an isomorphism outside a set of codimension at least two.

\begin{definition} [Mukai, \cite{Mu}]\label{def:-1mu}
A \textit{$(-1)$ divisor} on $X_{n,s}$ is a divisor $D$ of $X_{n,s}$ for which there exists a strong birational map from $X_{n,s}$ to some $X'$ so that the image of D can be contracted to a smooth point. 
\end{definition}

\begin{remark}\label{rem:-1divweyl}
Since the standard Cremona transformation is a strong birational map, any element of the orbit of the Weyl group action on an exceptional divisor $W_{n,s}\cdot E_i$, is a $(-1)$ divisor.
\end{remark}

Even if the next remark is obvious (see also Remark~\ref{rem:eff}) we will include it here just to emphasize that both definitions of $(-1)$ divisors introduced by Mukai and divisorial $(-1)$ classes are generalizing the notion of $(-1)$ curves in the plane.

\begin{remark}\label{rem:planar}
On the blow up of the projective plane in points the three definitions \emph{divisorial $(-1)$ classes} (Definition~\ref{def:-1div}), \emph{$(-1)$ divisors} (Definition~\ref{def:-1mu})  and \emph{$(-1)$ curves} (Definition~\ref{def:-1 curve}) are equivalent.
\end{remark}

\begin{proposition}\label{prop:sd}
For $i\in \{-1,0,1\}$, let $D$ be a divisorial $(i)$ class on $X_{n,s}$. Then
\begin{enumerate}
\item\label{item:wDstandard} $w(D)$ is also a divisorial $(i)$ class for any Weyl group element $w\in W_{n,s}$.
\item\label{item:wDcones} Cones over $D$ are divisorial $(i)$ classes on $X_{n+1,s+1}$.
\end{enumerate}
\end{proposition}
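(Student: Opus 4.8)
The plan is to verify each part by reducing it to the already-established invariance properties of the Dolgachev-Mukai pairing and the anticanonical degree, and then separately handling the geometric conditions (effective, reduced, irreducible) in the definition of a $(-1)$ class.

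For part \eqref{item:wDstandard}, write $w = \Cr_{I_1}\circ \cdots \circ \Cr_{I_t}$ as in Remark~\ref{rem:composition}, so it suffices to treat a single standard Cremona transformation $\Cr_I$ and then iterate. The numerical conditions $\langle D, D\rangle = -1$ and $\adeg(D) = 1$ are preserved by part \eqref{item:CrIntersection} of Theorem~\ref{thm:cc}, since the Cremona transformation preserves both the Dolgachev-Mukai pairing and the anticanonical degree. The effectivity of $\Cr_I(D)$ follows from property \eqref{item:CrEffective} of Remark~\ref{rem:cr} (equivalently Corollary~\ref{cor:cre}). The remaining points---that $\Cr_I(D)$ is reduced and irreducible---follow from the fact that $\Cr_I$ is a strong birational map (an isomorphism in codimension one, as recalled in Section~\ref{s:weyl}): the strict transform of a reduced irreducible divisor under a pseudo-isomorphism is again reduced and irreducible, because the open loci where the map is an isomorphism are dense in both $D$ and its image. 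I expect this last geometric point to be the only subtle step; everything else is a direct appeal to Theorem~\ref{thm:cc} and Remark~\ref{rem:cr}. One should be slightly careful that ``$\Cr_I(D)$'' here means the class obtained by the formula \eqref{eq:crem}, which agrees with the strict transform precisely because no coordinate hyperplane is a component of $D$ (this uses irreducibility together with $\adeg(D)=1$, which forces $D \neq H - \sum_{i\in J} E_i$ for any hyperplane class).

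For part \eqref{item:wDcones}, let $\widetilde D = d_1 H - d_1 E_0 - \sum_{i=1}^s m_i E_i \in \Pic(X_{n+1,s+1})$ be the cone over $D$ with vertex $E_0$. By part \eqref{item:CrCones} of Theorem~\ref{thm:cc}, coning preserves both $\langle \cdot, \cdot\rangle$ and $\adeg$, so $\langle \widetilde D, \widetilde D\rangle = \langle D, D\rangle = -1$ and $\adeg(\widetilde D) = \adeg(D) = 1$; the numerical conditions \eqref{eq:num} are thus inherited. Effectivity is clear: the cone over an effective divisor is an effective divisor (it is literally the union of lines through the vertex point over the points of $D$, and set-theoretically this is cut out by the same equations viewed in the larger projective space). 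Reducedness and irreducibility of the cone over a reduced irreducible projective variety is a standard fact---the affine cone over an irreducible projective variety is irreducible, and blowing up the vertex does not change this---so $\widetilde D$ is again reduced and irreducible. Hence $\widetilde D$ is a $(-1)$ class on $X_{n+1,s+1}$.

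The main obstacle, such as it is, lies in part \eqref{item:wDstandard}: one must make sure that applying the linear-algebra formula \eqref{eq:crem} to a $(-1)$ class really does coincide with taking the strict transform, so that the resulting class is genuinely effective, reduced, and irreducible rather than merely numerically correct. This is where the hypothesis that $D$ is irreducible (and not a hyperplane through $n+1$ of the points) is essential---compare Example~\ref{eg:ex1} and the warning in Remark~\ref{rem:ex1} that multiplicities of $\Cr(D)$ can go negative when a coordinate hyperplane is a fixed component. Once that identification is in hand, the proof is a routine concatenation of Theorem~\ref{thm:cc}, Remark~\ref{rem:cr}, and the birational-invariance of irreducibility.
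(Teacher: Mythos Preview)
Your proposal is correct and follows the same overall structure as the paper: reduce to a single $\Cr_I$, invoke Theorem~\ref{thm:cc} for the numerical conditions, Remark~\ref{rem:cr}\eqref{item:CrEffective} for effectivity, and then argue irreducibility separately; likewise for cones via Theorem~\ref{thm:cc}\eqref{item:CrCones}.

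The one genuine difference is in how irreducibility is handled for part~\eqref{item:wDstandard}. You argue geometrically: $\Cr_I$ is a pseudo-isomorphism, so the strict transform of an irreducible divisor is irreducible, and you then spend effort reconciling the strict transform with the class given by formula~\eqref{eq:crem}. The paper instead gives a two-line algebraic argument using the involution property (Remark~\ref{rem:cr}\eqref{item:CrInvolution}): if $\Cr(D)=F+G$ with $F,G$ effective, then $D=\Cr(F)+\Cr(G)$ is a decomposition of $D$ into effective pieces (by Remark~\ref{rem:cr}\eqref{item:CrEffective}), contradicting irreducibility. This sidesteps entirely your concern about whether the formula agrees with the strict transform or whether a coordinate hyperplane could be a component---the argument lives purely at the level of classes and effectivity. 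Your geometric route is perfectly valid, but the involution trick is shorter and avoids the caveats you flagged. For part~\eqref{item:wDcones} the paper argues irreducibility the same way (decompose the cone, project from the vertex, contradict irreducibility of $D$), which is essentially your ``cone over irreducible is irreducible'' phrased contrapositively.
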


Part (1) of this Proposition was also appeared in \cite{LUst}, but only when $i=-1$.

\begin{proof}
For \eqref{item:wDstandard} it is enough to prove that $Cr(D)$ is a divisorial $(i)$ class by Remark~\ref{rem:composition}.

Let $D$ be a divisorial $(i)$ class. 
We have seen in Theorem~\ref{thm:cc}
\begin{align*}
\langle \Cr(D), \Cr(D)\rangle&=\langle D, D \rangle=i\\
\adeg(\Cr D)&=\adeg(D)=2+i.
\end{align*}

Property~\eqref{item:CrEffective} of Remark~\ref{rem:cr} implies that $\Cr(D)$ is effective.
The last thing we need to check is that $\Cr D$ is irreducible. Assume by contradiction that $\Cr(D)$ is not irreducible. For $F, G\in \Pic (X_{n,s})$ then
\[
\Cr(D)=F+G.
\]

By Property~\eqref{item:CrInvolution} of Remark~\ref{rem:cr} we have
\[
D=\Cr(F)+\Cr(G).
\]

This a contradiction of the irreducibility of $D$. Thus $\Cr(D)$ is irreducible, and therefore a divisorial $(i)$ class.

We use the same technique to prove \eqref{item:wDcones}. Let $\widetilde{D}$ be the cone over $D$. Indeed by Theorem~\ref{thm:cc},
\begin{align*}
\langle \widetilde{D}, \widetilde{D} \rangle&=\langle D, D \rangle=i\\
\adeg(\widetilde{D})&=\adeg(D)=2+i.
\end{align*}

Obviously, $\widetilde{D}$ is effective because $D$ is effective.
Moreover, assuming that the cone $\widetilde{D}$ is not irreducible, $\widetilde{D}=\widetilde{G}+\widetilde{K}$ then denote by $G$ and $K$ to be the image of $\widetilde{G}$ and $\widetilde{K}$ under projection from the vertex. Then $D=G+K$ contradicting the irreducibility assumption of $D$. Thus the cone $\widetilde{D}$ is a divisorial $(i)$ class.
\end{proof}

In particular, we see that divisorial $(-1)$ classes are also $(-1)$ divisors in the sense of Mukai's definition (see Remark~\ref{rem:-1divweyl}). 
	
We end this section with several examples of important divisors that are not divisorial $(-1)$ classes. It is unknown if they are $(-1)$ divisors in the sense of Mukai, however we expect the first two examples (in Example~\ref{eg:sporadic}) to be generators for the Cox rings of $X_{3,9}$ and $X_{4,14}$, resp. In fact, $D_2$ is related to the Keel--Vermeire divisor on $\overline{M}_{0,6}$, which is known to be a generator of the Cox ring. We call them \emph{sporadic divisors}. The other two examples in Remark~\ref{rem:baselocus} illustrate a point about divisors with base locus.

\begin{example} [Sporadic divisors]\label{eg:sporadic}
Let $D_1\in \Pic(X_{3,9})$ and $D_2\in \Pic(X_{4,14})$ be defined by 
\[
D_1:=2H- E_1-\ldots - E_9, \text{  and } D_2:=2H-E_1-\ldots-E_{14}
\]

We can easily see that  
\begin{align*}
\adeg D_1&=\frac{1}{2}\langle D_1,-K_{X_{3,9}}\rangle = 4 \cdot 2 -9=-1\\
\langle D_1, D_1\rangle &=-1
\end{align*}
and 
\begin{align*}
\adeg D_2&=\frac{1}{3}\langle D_2,-K_{X_{4,14}}\rangle = 5 \cdot 2 -14=-4\\
\langle D_2, D_2\rangle &=-2
\end{align*}
so neither $D_1$ nor $D_2$ is a divisorial $(-1)$ class. 

However, 
\[
\chi (X_{3,9},\mathcal{O}_{X_{3,9}}(D_1))=\binom{5}{2}-9=1.
\]
and
\[
\chi (X_{4,14},\mathcal{O}_{X_{4,14}}(D_2))=\binom{6}{2}-14=1.
\]

Although not divisorial $(-1)$ classes, we see that $D_1$ still satisfies two properties of Lemma~\eqref{lem:eq}, namely
\[
\langle D_1, D_1\rangle =-1\quad \text{  and }\quad \chi (X_{3,9},\mathcal{O}_{X_{3,9}}(D_1))=1
\]
while $D_2$ satisfies just one property of Lemma~\ref{lem:eq}. 

The importance of these divisors lies in the fact that both $D_1$ and $D_2$ are not in the Weyl group orbit of an exceptional divisor $E_i$, but whenever they are contained in the base locus of any divisor $D$, they will create a change in $\dim H^0(X_{n,s},\mathcal{O}_{X_{n,s}}(D))$ in the sense of Conjecture~\ref{conj:GHH} and its corollary.

We would like to investigate in future work if there are other numerical criterium to classify such sporadic divisors on $X_{n,s}$.
\end{example}

\begin{remark}\label{rem:baselocus}
The Alexander--Hirschowitz Theorem classifies all effective divisors $D$ with double points in $X_{n,s}$ for which 
\[\dim H^0(X_{n,s},\mathcal{O}_{X_{n,s}}(D))=\chi (X_{n,s},\mathcal{O}_{X_{n,s}}(d)).
\]
This theorem can also be stated in terms of the secant variety, namely the higher secant variety $\sigma_{s} (V_{n,d})$ to the Veronese embedding $V_{n,d}$ of degree $d$ in $\PP^{n}$  is non-defective with a short list of exceptions. Two of the exceptions are the two divisors $F_1\in \Pic(X_{3,9})$ and
$F_2\in \Pic(X_{4,14})$ given by
\[
F_1:=4H- 2E_1-\ldots - 2E_9, \text{  and } F_2:=4H-2E_1-\ldots-2E_{14}
\]	

Notice that divisors $F_1$ and $F_2$ have the quartics $D_1$ and $D_2$ respectively as base locus. Similar to the base locus Lemma~\ref{lem:base locus lemma} the Dolgachev-Mukai pairing between $D_j$ and $F_j$ is negative

\[
\langle F_1, D_1\rangle =-2 \text{ and } \langle F_2, D_2\rangle =-2.
\]

For both divisors $F_j$ we have $\chi (X_j,\mathcal{O}_{X_j}(F_j))=0$, but they are effective since $F_j=2 \cdot D_j$. Therefore 
\[
\dim H^0(X_j,\mathcal{O}_{X_j}(F_j))=\dim H^1(X_j,\mathcal{O}_{X_j}(F_j))=1.
\]

For arbitrary number of points in higher dimensional projective spaces, no analogue of Conjecture~\ref{conj:GHH} exists for $X_{n,s}$, except in the case $s\leq n+3$; we state this conjecture as Conjecture~\ref{conj:RNC}. 

In general, it is expected that all effective divisors $D$ with $\dim H^1(X_{n,s},\mathcal{O}_{X_{n,s}}(D))\neq 0$ contain base locus. A variety that produces non-vanishing cohomology in degree 1 when contained as the fixed point of a divisor with multiplicity, is called a \emph{special effect variety}. For example, the two divisors $D_1$ and $D_2$ of our previous example are special effect varieties. 
Laface-Ugaglia conjectured in \cite{LU} that the only special effect divisors of $X_{3,s}$ are divisorial $(-1)$ classes and divisors in the Weyl group orbit of $D_1$. It will be interesting to study if sporadic divisors (like $D_j$ or cones over $D_j$)  have  special numerical interpretation similar to divisorial $(-1)$ classes.
\end{remark}

\section{A generalization of the Max  Noether inequality to $\PP^n$.}\label{s:Noether}
In previous sections we emphasized several differences between $(-1)$ curves and divisorial $(i)$ classes.  In this section we will prove Theorem~\ref{thm:noether2}, which we restate here. Recall the classes $\alpha_{k}$ from Section~\ref{ss:weylgroup}.

\begin{remark}
	The Theorem \ref{thm:noether2} generalizes the original Noether's inequality from the projective plane to $\mathbb{P}^n$, following the technique described by Dolgachev \cite{Dolgachev} and \cite[Lemma 5.7.10]{Dolgachev book}, see also \cite[Lemma 2.2]{DO}.
\end{remark}

\begin{theorem*}[=Theorem~\ref{thm:noether2}]
Let $D$ be a divisor with $d\geq m_i\geq 0$ so that $\adeg D=c+2$ and $\langle D, D \rangle=c+e$ for two integers $c$ and $e$ satisfying $-2\leq c, e \leq 1$. If $d=1$ further assume that $\langle D, D \rangle<0$. Then we can reorder the indices, so that  
$\langle	D,\alpha_{k} \rangle\geq 0$ for any $1\leq k\leq s-1$ and  $\langle	D,\alpha_{s} \rangle<0$.

\end{theorem*}


The condition $\langle	D,\alpha_{k} \rangle\geq 0$  says that $m_{1}\geq \ldots \geq m_{s}$ (after reordering) and condition $\langle	D,\alpha_{s} \rangle<0$ says that $m_{1}+m_{2}+ \ldots + m_{n+1} > (n-1) d$. By \eqref{eq:crem}, there is a Cremona transformation such that $\Cr(D)$ has strictly smaller degree than $D$, and also strictly smaller multiplicities at the first $n+1$ points. Thus we say that a divisor $D$ satisfying $\langle	D,\alpha_{s} \rangle<0$ as in the Theorem is not \emph{Cremona reduced}. 

In the case $n=2$, the original hypotheses of the Max Noether inequality for curves---irreducibility, rationality and bounded self-intersection---apply to $(i)$ curves. In contrast, this new result 
says that under good numerical hypotheses of degree and self-intersection (notice the absence of the irreducibility assumption in the hypothesis, so that $D$ is not necessarily a divisorial $(i)$-class), one can perform a Cremona transformation in such a way that reduces the degrees and multiplicities. 

The next example shows that condition $d\geq m_j$ is mandatory in the hypothesis of Theorem~\ref{thm:noether2}. 

\begin{example}\label{eg:ex1} Take $D:=5H- 6E_1-2E_2-E_3-\ldots-E_{13} \in \Pic(X_{3, 13})$. Then 
\begin{equation}
\begin{split}
&\langle D, D \rangle =(3-1)\cdot 5^2-6^2-2^2-11=-1\\
&\adeg D=\frac{\langle D, - K_{X_{3,13}} \rangle}{2} =4\cdot 5-(6+2+11)=1
\end{split}
\end{equation}
We can see that $c=-1$ and $e=0$, however the maximal sum of four multiplicities is 10, and $(n-1)d=10$, so the conclusion of Theorem~\ref{thm:noether2} does not hold. 
The point is $D$ is not effective, and moreover $m_1>d$, so the theorem doesn't apply to this divisor.
\end{example}

\begin{remark}\label{rem1} 
The proof of Theorem~\ref{thm:noether2} can extend also to some cases where $e=2$, but we will leave this to the interested reader. 
\end{remark}

\begin{remark}\label{rem:Notherhypothesis} 
Notice that the irreducibility condition in the hypothesis of the Noether's original is replaced in Theorem~\ref{thm:noether1} by the condition $d\geq m_j\geq 0$. This stronger version of the theorem was originally observed for the planar case in Theorem~\ref{thm:do} of \cite{DO}, however by Remark~\ref{rem:eff} the assumption $d\geq m_j$ can be eliminated only for $n=2$.
\end{remark}

\begin{remark}
Theorem~\ref{thm:noether2} generalizes the Noether inequality even in the case when $n=2$. Indeed, the original Noether inequality (Theorem~\ref{thm:noether1}) requires the curve to be irreducible and rational. The arithmetic genus formula in dimension 2 (Equation~\eqref{eq:agenus}) with $p_a(D)=0$ yields $D\cdot D=\adeg(D)-2$. In the notation of Theorem~\ref{thm:noether2}, this means that $e=0$. On the other hand, using the same notation for $n=2$, we see that $p_a(D)=\frac{e}{2}$. If we consider only elliptic curves, the condition $-2\leq e\leq 1$ forces $p_a(D)=0$, as $p_a(D)$ is a non-negative integer. Again the absence of any irreducibility assumption shows that Theorem~\ref{thm:noether2} is less restrictive. 
Further notice that case $e=2$ of Theorem~\ref{thm:noether2} discussed in Remark~\ref{rem1} generalizes the planar Noether inequality to non-rational divisors $D$.

\end{remark}

We will now dedicate the remaining part of this section to the proof of Theorem~\ref{thm:noether2} generalizing  Max  Noether inequality from $\mathbb{P}^2$ to $\mathbb{P}^n$.

\begin{proof}[Proof of Theorem~\ref{thm:noether2}.]
Case 1. $s\leq n$. Condition $\adeg D =2+c$ implies that $m_{1}+\ldots+m_{s}-(n-1)d=-c+2(d-1)\geq 1$ for $d\geq 2$. If $d=1$, the hypothesis $\langle D, D \rangle<0$ implies $s\geq n$ therefore the statement holds.

Case 2. $s\geq n+1$. We order multiplicities in decreasing order $m_1\geq m_2\geq \ldots\geq m_s$. 

We first assume $d=m_1$---i.e. $D$ is a cone---and prove the statement under this assumption by induction on the dimension $n$. 

The base case is $n=2$. Let $t$ denote the last index with $m_t\neq 0$, i.e. $m_1,\dots, m_t\neq 0$, but $m_{t+1}=\dots=m_s=0$. The conditions $\adeg D=c+2$ and $\langle D,D\rangle =c+e$ imply $-m_2^2-\ldots-m_s^2\geq -4$ and $2m_1-m_2-\ldots-m_s\in \{0,\ldots, 3\}$. This forces $m_2\leq 2$, $t\leq 5$  and $d\leq 3$. We conclude that the only possible cones for $n=2$ satisfying hypothesis conditions are
\begin{itemize}
	\item $d=1$ and $2\leq t\leq 3$; 
	\item $d=m_1=m_2=2$ and $t=2$; 
	\item $d=m_1=2$, $m_2=1$ and $2\leq t\leq 5$;
	\item $d=m_1=3$, $m_2=1$ and $2\leq t\leq 5$.
\end{itemize}

In each of the four cases above the conclusion holds. For $n\geq 3$, let $D=\widetilde{F}$ be a cone over a divisor $F\in \Pic (X_{n-1, s-1})$ of degree $d$ and multiplicities and $m_2,\ldots, m_s$.    
Theorem~\ref{thm:cc} implies that $F$ satisfies hypothesis 
\[
\langle \widetilde{F}, \widetilde{F} \rangle=\langle F, F \rangle=c+e \quad \text{ and }\quad \adeg\widetilde{F}= \adeg F= c+2,
\] 
so by the induction hypothesis $m_{2}+m_{3}+ \ldots + m_{n+1} > (n-2) d.$ Therefore $m_1+m_{2}+ \ldots + m_{n+1} > (n-1) d$, and this concludes the proof for $d=m_1$.  

Finally, we lift the requirement that $d=m_1$. If $d=1$ then condition $\langle D, D \rangle<0$ implies that $m_k=1$ for all $k\leq n$ therefore conclusion holds. We can therefore assume $d\geq 2$, $n\geq 2$ and $d>m_k$ for all $k$. 

For $1\leq j\leq n+1$ define
\[
q_j:=\frac{\sum_{k=j}^s m_k^2}{\sum_{k=j}^s m_k}.
\]

Because $m_j\geq m_k$ for $k\geq j$ we have that $m_j\geq q_j$ for any $1\leq j\leq n+1$. Set
\begin{equation}\label{eq:1}
r_j:=m_j-q_j\geq 0.
\end{equation}

and observe the following equalities
\[
q_1=\frac{(n-1)d^2-c-e}{(n+1)d-c-2} \]
\[q_j=q_{j-1}-r_{j-1}\frac{m_{j-1}}{m_j+\ldots+ m_s}
\]
for any $2\leq j<n+1$.

Recall that by hypothesis $m_1+\ldots+m_s=(n+1)d-c-2$ and $d>m_k$ for all $k$. For every $2\leq j\leq n+1$, since $d\geq m_j+1$ we obtain
\begin{align*}
m_j+\ldots+m_s& =(n+1)d-(m_{1}+\ldots +m_{j-1})-2-c\\
&=(n+2-j)d+(d-m_1)+\dots+(d-m_{j-1})-2-c\\
&\geq (n+2-j)d-c+(j-3).
\end{align*}

From this equality (and $d\geq m_j+1$) we obtain 
\begin{equation*}
\begin{split}
q_j &=q_{j-1}-r_{j-1}\frac{m_{j-1}}{m_j+\ldots+ m_s}\\
&\geq q_{j-1}-r_{j-1}\frac{d-1}{(n+2-j)d-c+(j-3)}.
\end{split}
\end{equation*}

Recall $m_j\geq q_j$, so we obtain
\begin{equation}\label{eq:2}
\begin{split}
m_1+m_2+\ldots+m_{n+1} &\geq (q_1+r_1)+\ldots+(q_n+r_n)+q_{n+1}\\
&\geq(q_1+r_1)+\ldots+(q_{n-1}+r_{n-1})+2q_{n}+r_{n}\Big(1-\frac{d-1}{d-c+(n-2)}\Big)\\
&\geq \sum_{i=1}^{n-2}(q_i+r_i)+3q_{n-1}+r_{n-1}\Big(1-2\frac{d-1}{2d-c+(n-3)}\Big)+
r_{n}\Big(1-\frac{d-1}{d-c+(n-2)}\Big)\\
&\geq (n+1)q_1+\sum_{k=1}^{n} r_k\Big(1-(n+1-k)\frac{d-1}{(n+1-k)d-c+(k-2)}\Big).
\end{split}
\end{equation}

We will now prove that 

\begin{equation}\label{eq:3}
1-(n+1-k)\frac{d-1}{(n+1-k)d-c+(k-2)}=\frac{n-1-c}{d(n+1-k)-c+(k-2)}\geq 0.
\end{equation}
\vskip.3cm 

Indeed, notice that $c\leq 1$ implies $n-1-c\geq 0$ (and equality only for $n=2$ and $c=1$). Moreover, $d\geq 2$ and $k\leq n+1$ imply
\vskip .3cm
\begin{equation*}
\begin{split}
d(n+1-k)+(k-2)-c& \geq 2(n+1-k)+(k-3)\\
&\geq 2n-k-1\\
&\geq 1.
\end{split}
\end{equation*}

Inequalities~\eqref{eq:1},\eqref{eq:2}, and \eqref{eq:3} imply that 
\begin{equation*}
\begin{split}
m_1+\ldots+m_{n+1}&\geq (n+1)q_{1}\\
&=(n+1)\frac{d^2(n-1)-c-e}{(n+1)d-c-2}
\end{split}
\end{equation*}
with equality either if $m_i$ are equal for all $i$ (i.e. $r_1=0$) or if  $n=2$ and $c=1$.

We finally claim that 
\[
(n+1)\frac{d^2(n-1)-c-e}{(n+1)d-c-2}>(n-1)d.
\]

This is equivalent to proving that for all $-2\leq c, e \leq 1$, $n\geq 2$ and $d\geq2$ the following inequality holds
\begin{equation}\label{eq:4}
(c+2)(n-1)d> (c+e)(n+1).
\end{equation}
This follows since
\begin{itemize}
\item If $c=-2$ then $0> (e-2)(n+2)$.
\item If $c=-1$ then $(n-1)d\geq 2(n-1)>0\geq (e-1)(n+1)$.
\item If $c=0$ then $2(n-1)d\geq 4(n-1)>n+1\geq e(n+1)$ since $n\geq2>\frac{5}{3}$.
\item If $c=1$ then $3(n-1)d\geq 6(n-1)\geq 2(n+1)\geq (1+e)(n+1)$. \\
\end{itemize}
Notice equality holds only in the last case $c=e=1$ and $n=d=2$. However, the $\PP^2$ hypothesis implies
$\displaystyle d^2-\sum_{k=1}^s m_k= 4-\sum_{k=1}^s 1=2$  and $\displaystyle 6-\sum_{k=1}^s 1= 3$ therefore $s=3$, so the conclusion holds.
\end{proof}

\section{Generalization of Nagata's correspondence.}\label{s:Nagatacorrespondence}

In this section we will prove Theorem~\ref{thm:weyl3}, that generalizes Theorem~\ref{thm:weyl} (due to Nagata) to $\PP^n$ (see also Remark~\ref{rem:planar}) following the approach of Nagata. In \cite{Dolgachev}, Dolgachev has a nice exposition of Nagata's theorem. Let us first recall Theorem~\ref{thm:weyl3}

\begin{theorem*}[=Theorem~\ref{thm:weyl3}] Let $i\in\{-1,0,1\}$ and $D$ be a divisor in $\Pic(X_{n,s})$. Then $D$ is a divisorial $(i)$ class if and only if it is in the orbit of $H_{n-1-i}$ a hyperplane passing through $n-1-i$ points under the action of the Weyl group. 
In particular, the Weyl group acts transitively on the set of divisorial $(i)$ classes.
\end{theorem*}

It is important to remark that Example~\ref{eg:ex1} and Example~\ref{eg:NotEffective} emphasize the importance of the effectivity assumption for the main theorems of the paper, namely Theorems~\ref{thm:noether2} and the Nagata correspondence in Theorem~\ref{thm:weyl3}.

The first part of the proof of Theorem~\ref{thm:weyl3} follows from the following lemma:

\begin{lemma}\label{lem:exceptional} Let $i\in\{-1,0,1\}$.  A proper transform $H_{n-1-i}$ of a hyperplane  passing through $n-1-i$ points is a divisorial $(i)$ class for $i\in \{-1,0,1\}$.
	
Furthermore, $E_j$ is a divisorial $(-1)$ class for $1\leq j \leq s$.

\end{lemma}

\begin{proof}

For $i\in\{-1,0,1\}$ we have
\begin{align*}
 &\langle H_{n-1-i}, H_{n-1-i} \rangle =i \\
\frac{1}{n-1}&\langle H_{n-1-i}, - K_{X_{n,s}} \rangle =i+2
\end{align*}

Moreover, the proper transform of the hyperplane passing through $n-1-i$ points, $H_{n-1-i}$ is effective and irreducible. The same argument shows that $E_i$ is also a divisorial $(-1)$ class.

\end{proof}

Recall that Theorem~\ref{thm:cc} and Proposition~\ref{prop:sd} imply that the Weyl group preserves intersection pairing of Dolgachev-Mukai and divisorial $(i)$ classes.
In other words, if $w\in W_{n,s}$, then we have 
	\begin{itemize}
		\item $\langle w(D), w(F) \rangle=\langle D, F \rangle$. 
		\item If $D$ is a divisorial $(i)$ class then $w(D)$ is a divisorial $(i)$ class.
	\end{itemize}

This proves the following corollary.

\begin{corollary}\label{cor:weyl2} Let $i\in \{-1,0,1\}$. If $D$ is an $(i)$ Weyl divisor, then $D$ is a divisorial $(i)$ class.

\end{corollary}

For the rest of the proof of Theorem~\ref{thm:weyl3}, it suffices to prove the converse of Corollary~\ref{cor:weyl2} in $X_{n,s}$.

\begin{proof}[Proof of Theorem~\ref{thm:weyl3}] Let $\displaystyle D=dH-\sum_{k=1}^sm_kE_k\in \Pic(X_{n,s})$.  \\
If $D$ is an $(i)$ Weyl divisor, then $D$ is a divisorial $(i)$ class by Corollary~\ref{cor:weyl2}. \\

Conversely, assume that $D$ is a divisorial $(i)$ class on $X_{n,s}$. We prove the statement by induction on $\deg(D)$. 

If $d=0$, the irreducibility assumption implies  $D=E_1$ so $i=-1$ and the result follows from Lemma~\ref{lem:exceptional}.

If $d=1$ all multiplicities are at most $1$ and by irreducibility the self intersection condition 
\[
(n-1)-\sum_{j=1}^s m_j=i
\]
implies that the number of non-zero multiplicities is $n-1-i$. Therefore $D$ is the hyperplane passing through $n-1-i$ points. Again the result follows by Lemma~\ref{lem:exceptional}.

Assume now that $\deg(D)\geq 2$. For convenience order multiplicities increasingly, so that $m_1\geq m_2\geq\dots\geq m_s$. If $s\geq n+1$ then Theorem~\ref{thm:noether2} for $c=i$ and $e=0$ implies that 
\[
m_{1}+\ldots+m_{n+1}> (n-1)d.
\]
Apply a standard Cremona transformation based on points $p_1, \ldots, p_{n+1}$. Equation~\eqref{eq:crem} implies that $\deg (\Cr D)< \deg D$. By Proposition~\ref{prop:sd}, we see that $\Cr D$ is also a divisorial $(i)$ class of smaller degree. The induction hypothesis implies $\Cr D\in W_{n,s}\cdot H_{n-i-1}$, and therefore $D\in W_{n,s}\cdot H_{n-i-1}$, where $H_{n-i-1}$ represents a hyperplane passing through $n-1-i$ points, so it is a $(i)$ Weyl divisor.

Assume $\deg(D)\geq 2$ and $s\leq n$. Since $D$ is an effective divisor, we know $d\geq m_i$ that implies $(n-1)d\geq \sum_{k=1}^{n-1} m_{k}\geq 0$.
Moreover,  Theorem~\ref{thm:noether2} implies that $\sum_{i=1}^n m_i>(n-1)d\geq \sum_{k=1}^{n-1} m_i$. We conclude $s=n$. 


Moreover, since $\sum_{i=1}^{n}m_i-(n-1)d>0$ the Base Locus Lemma 2.1 of \cite{bdp1}, Lemma \eqref{lem:base locus lemma}, we obtain that the hyperplane passing through the $n$ points is in the base locus of the divisor $D$. However, the definition of a divisorial $(i)$ class, implies that $D$ is irreducible. We conclude that $D$ is a hyperplane passing through $n$ points, therefore it is a $(-1)$ Weyl divisor and $\deg(D)=1$.

\end{proof}

\begin{remark}
As Example~\ref{eg:ex1} illustrates, Theorem~\ref{thm:weyl3} does not hold without the effectivity hypothesis of divisorial $(i)$ classes.  This is because the condition of Theorem~\ref{thm:noether2}, namely $d\geq m_k$ for all $1\leq k\leq s$, is not invariant under Cremona transformations. So without the effectivity hypothesis of the divisors, Theorem~\ref{thm:noether2} cannot be applied repeatedly. 
\end{remark}

We now generalize Proposition~\ref{prop:dolg1} (see also \cite{Dolgachev} for $n=2$).

\begin{theorem*}[=Theorem~\ref{thm:dol}] There are no irreducible effective divisors $D$ on $X_{n,s}$ with $ \langle D, D \rangle=r\in \{-3, -2\}$ and $\adeg(D)=-2-r$.
\end{theorem*}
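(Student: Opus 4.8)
The plan is to mimic the strategy used to prove Theorem~\ref{thm:weyl3}: reduce to a Cremona-minimal representative via repeated standard Cremona transformations, and then check the short list of minimal cases directly. First I would observe that the hypotheses are exactly set up so that Theorem~\ref{thm:noether2} applies with $c := -2-r$ and $e := (c+e) - c = \langle D,D\rangle - c = r - (-2-r) = 2r+2$; however one must be careful, since for $r\in\{-4,-3,-2\}$ we get $c\in\{2,1,0\}$ (out of range for $c\le 1$ when $r=-4$) and $e\in\{-6,-4,-2\}$ (out of range for $e\ge -2$ except $r=-2$). So the clean application of Theorem~\ref{thm:noether2} only works for $r=-2$, where $(c,e)=(0,-2)$; for $r=-3,-4$ one would need either the extension alluded to in Remark~\ref{rem1} (the proof of Theorem~\ref{thm:noether2} ``can extend to some cases where $e=2$'', and presumably other boundary values by the same computation) or a direct re-run of the inequality~\eqref{eq:4}-style estimate with the relevant $c,e$. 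I would state up front which numerical inequality is needed, namely that for a divisor with $d\ge m_i\ge 0$ in one of these three numerical types, if $s\ge n+1$ then $m_{i_1}+\dots+m_{i_{n+1}} > (n-1)d$ for some $n+1$ indices — this is the Noether-type step, and I expect it to be the main obstacle, precisely because it falls just outside the stated range of Theorem~\ref{thm:noether2}.

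Granting that Noether-type inequality, the argument runs as follows. Suppose $D$ is an irreducible divisor on $X_{n,s}$ with $\langle D,D\rangle = r$ and $\adeg(D) = -2-r$ for some $r\in\{-4,-3,-2\}$; after reordering the points we may assume $m_1\ge \dots \ge m_s \ge 0$ and, since $D$ is irreducible with positive degree, $d\ge m_i$ for all $i$ (an irreducible hypersurface of degree $d$ cannot have a point of multiplicity $>d$; the exceptional divisors $E_i$ have $\adeg = 1 \ne -2-r$ and self-intersection $-1\ne r$ so they are excluded). If $s\le n$, I would run the same small computation as in Case~1 of the proof of Theorem~\ref{thm:noether2}: the anticanonical degree condition forces $\sum m_i - (n-1)d = -c + 2(d-1)$, and combined with $\langle D,D\rangle = c+e < 0$ one derives a contradiction with $s\le n$ directly, since there are too few points to make the self-intersection that negative while keeping $d\ge m_i$. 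If $s\ge n+1$, apply the Noether-type inequality to get $n+1$ indices, relabel them as the first $n+1$ points, and perform the standard Cremona transformation $\Cr$ based at those points. By Theorem~\ref{thm:cc}\eqref{item:CrIntersection}, $\Cr$ preserves both $\langle D,D\rangle$ and $\adeg(D)$, and by Remark~\ref{rem:cr}\eqref{item:CrEffective} it preserves effectivity; the inequality guarantees $k = m_1+\dots+m_{n+1}-(n-1)d > 0$, so $\deg(\Cr D) = d - k < d$. Irreducibility, however, need not be preserved — but this is fine: what I need is only that the process strictly decreases the degree while preserving the numerical type, so after finitely many steps I reach a divisor $D'$ in the same numerical type with minimal degree, and I analyze $D'$ by hand.

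So the core of the proof becomes: classify all $D'$ with $d' \ge m'_i \ge 0$, $\langle D',D'\rangle = r$, $\adeg(D') = -2-r$, and $d'$ as small as the Cremona reduction allows (in particular $d'$ small enough that the Noether-type inequality can no longer produce a degree drop — heuristically $d'=1$ or $d'=2$). For $d'=1$: all multiplicities are $0$ or $1$, say $t$ of them equal $1$, so $\adeg = (n+1)-t = -2-r$ and $\langle D',D'\rangle = (n-1)-t = r$; subtracting gives $2 = (-2-r)-r$, i.e. $r=-2$, $t=n+1$ — but a hyperplane through $n+1$ general points does not exist (general position), contradiction, and for $r=-3,-4$ the two equations on $(n,t)$ are simply inconsistent. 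For $d'=2$ (and the remaining small cases that survive the reduction), I would enumerate the possible multiplicity vectors exactly as in the base case $n=2$ of the proof of Theorem~\ref{thm:noether2} — the bounds $m'_2\le$ (small constant), $t\le$ (small constant), $d'\le$ (small constant) coming from $\sum (m'_i)^2 \le$ bounded and $2d' - \sum m'_i \in$ bounded set — and check that each either fails general position or in fact admits a further Cremona reduction, contradicting minimality. The upshot in every branch is a contradiction, so no such irreducible $D$ exists. The one genuinely delicate point, to repeat, is making the Noether-type inequality rigorous for the types with $r=-3,-4$; I would handle it by literally re-running the telescoping estimate \eqref{eq:2}--\eqref{eq:4} from the proof of Theorem~\ref{thm:noether2} with the specific values $(c,e)\in\{(1,-4),(2,-6)\}$ and checking that inequality~\eqref{eq:3}, namely $n-1-c\ge 0$, still holds where it is needed — for $c=2$ this requires $n\ge 3$, so the planar case $n=2$, $r=-4$ must be excluded or treated separately, which matches the fact that Proposition~\ref{prop:dolg1} and Theorem~\ref{thm:dol} for $n=2$ only concern $r=-2$.
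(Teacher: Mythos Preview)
Your overall strategy matches the paper's, but a computational slip makes everything look harder than it is. You set $c := -2-r$, but Theorem~\ref{thm:noether2} uses the convention $\adeg D = c+2$, so in fact $c = \adeg(D) - 2 = (-2-r)-2 = -4-r$; for $r\in\{-2,-3,-4\}$ this gives $c\in\{-2,-1,0\}$, all in range. Then $e = \langle D,D\rangle - c = 2r+4 \in\{0,-2,-4\}$, so Theorem~\ref{thm:noether2} applies verbatim for $r\in\{-2,-3\}$, and for $r=-4$ (where $c=0$, $e=-4$) the key inequalities~\eqref{eq:3} and~\eqref{eq:4} in its proof hold a fortiori: $n-1-c=n-1\ge 1$, and the right side $(c+e)(n+1)$ of~\eqref{eq:4} is negative. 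Your worry that $c=2$ would force $n\ge 3$ simply evaporates.

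Second, you write ``Irreducibility, however, need not be preserved'' and then pass to a possibly reducible minimal-degree representative. The paper does the opposite: it keeps irreducibility throughout, by the same argument as in Proposition~\ref{prop:sd}\eqref{item:wDstandard} --- if $\Cr D=F+G$ with $F,G$ effective and nonzero then $D=\Cr F+\Cr G$ contradicts irreducibility of $D$. This guarantees $d\ge m_i\ge 0$ at every step (a negative multiplicity on an irreducible class forces it to be some $E_i$, whose invariants are wrong here), so the Noether inequality can be applied again and the degree strictly drops until $d'\le 1$. Degree $0$ would give $E_i$, excluded; at degree $1$ one has $w(D)=H-\sum_{i\in I}E_i$ and the self-intersection alone yields $|I|=n-1-r\ge n+1$, a hyperplane through at least $n+1$ general points --- contradiction. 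That is the paper's (two-line) argument; your separate degree-$1$ and degree-$2$ case analyses become unnecessary once irreducibility is carried along. Your observation that for $r\in\{-3,-4\}$ the two numerical conditions are inconsistent at $d'=1$ is correct and is just another face of the same contradiction.
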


\begin{proof} We assume by contradiction that such divisor $D$ exist. Let $\adeg(D)=-2-r=c+2$, so $c=-r-4\in \{-1,-2\}$. Moreover, $\langle D, D \rangle=r=c+e$ implies $e=r-c=2r+4\in \{-2, 0\}$. We conclude that the divisor $D$ satisfies the hypothesis of Theorem~\ref{thm:noether2} and so the divisor $D$ is not Cremona reduced. We will apply Cremona transformations to reduce the degree and we recognize that $\Cr D$ is an irreducible, effective divisor therefore satisfying the same hypothesis of the theorem.


We apply a sequence of Cremona transformations to transform the divisor $D$ to an effective divisor of minimal degree $w(D)$. If the degree of $w(D)$ is zero, then $w(D)=2E_i$ since $D$ is irreducible and $ \langle D, D \rangle\in \{-3, -2\}$. However the divisor $2E_i$ violates the condition $\adeg(D)=-2-r$, for $r=-2$.
 If $w(D)$ is a divisor degree $1$, so that $\displaystyle w(D)=H-\sum_{i=1}^{s} E_{i}=H_{1\ldots s}$, then the self intersection condition implies that the number of points satisfies $s\geq n+1$ and therefore the divisors $w(D)$ and $D$ are not effective. We obtain a contradiction.
\end{proof}

\begin{corollary}\label{coro}
There are no irreducible effective divisors $D$ on $X_{n,s}$ with $ \langle D, D \rangle=0$ and $\adeg(D)=1$.

\end{corollary}
\begin{proof}
Assume by contradiction such divisor exists. Apply Theorem \ref{thm:noether2} for $c=-1$ and $e=1$ to conclude that $D$ is not Cremona reduced. Therefore, one can apply a Weyl group element until $d=1$, and by effectivity assumption all other multiplicities are at most $1$
 so $w(D)= H-\sum_{k=1}^{q} E_k$. Now $ \langle D, D \rangle=(n-1)-q=0$ implies that $q=n-1$ but $\adeg(D)=(n+1)-(n-1)=2\neq 1$ and this gives a contradiction.
\end{proof}

We remark that in the planar case a smooth curve class $C$ on $X_{2,s}$ with $C\cdot C=0$ and $C\cdot (-K)=1$ would have arithmentic genus $p_a(C)=\frac{2+D\cdot (D+K_X)}{2}=1/2$ that is a contradiction.

\section{Irreducibility Criterium of divisorial $(i)$ classes.}\label{s:irreduc}

In this section, we prove Theorem~\ref{thm:do2}, a  generalization of Theorem~\ref{thm:do} of Dumitrescu--Osserman in \cite{DO}, stating that we can replace the irreducibility assumption by a numerical criterium. Examples~\ref{eg:ex1} and \ref{eg:NotEffective}
show that the assumption that divisors be effective is needed in Theorem~\ref{thm:do2}.

For reference, let us recall Theorem~\ref{thm:do2}. 

\begin{theorem*}[=Theorem~\ref{thm:do2}] Let $i\in \{-1,0,1\}$. The divisor $D$ is a divisorial $(i)$ class on $X_{n,s}$ if and only if $D$ is effective satisfying numerical conditions \eqref{eq:num} and for all degrees $0<d'<d$ and all divisorial $(-1)$ classes $D'$ of degree $d'$ we have $\langle D, D' \rangle\geq 0$.
\end{theorem*}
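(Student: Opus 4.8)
The plan is to prove both directions, using Theorem~\ref{thm:weyl3} (Nagata's correspondence) as the main engine. The forward direction is immediate: if $D$ is a $(-1)$ class, then it is effective and satisfies \eqref{eq:num} by definition, and for any $(-1)$ class $D'$ of degree $d' < d$ we have $D \ne D'$, so since both are effective and irreducible, $\langle D, D' \rangle = D \cdot D'$ (interpreting the Dolgachev--Mukai pairing via Bezout/intersection for distinct irreducible divisors) must be $\geq 0$; one has to be slightly careful that no component of $D$ equals $D'$, but since $D$ is irreducible and $\deg D' < \deg D$, this cannot happen. So the content is entirely in the converse.

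For the converse, suppose $D$ is effective, satisfies \eqref{eq:num}, and $\langle D, D'\rangle \geq 0$ for every $(-1)$ class $D'$ of strictly smaller degree. I would argue by induction on $\deg(D) = d$. The base case $d = 1$ is handled exactly as in the proof of Theorem~\ref{thm:weyl3}: the numerical conditions force all multiplicities to be $1$ and $s = n$, so $D$ is the hyperplane through $n$ points, which is irreducible, hence a $(-1)$ class. For the inductive step with $d \geq 2$: after ordering multiplicities and noting that effectivity gives $d \geq m_i \geq 0$, apply Theorem~\ref{thm:noether2} with $c = -1$, $e = 0$ to get indices with $m_{i_1} + \cdots + m_{i_{n+1}} > (n-1)d$, so that the standard Cremona transformation $\Cr_I$ based at these points has $k > 0$ and hence $\deg(\Cr_I D) < \deg(D)$. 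By Remark~\ref{rem:cr}, $\Cr_I D$ is effective, and by Theorem~\ref{thm:cc} it still satisfies the numerical conditions \eqref{eq:num}. The key point to verify is that $\Cr_I D$ still satisfies the intersection hypothesis: for any $(-1)$ class $D''$ of degree $< \deg(\Cr_I D)$, we have $\langle \Cr_I D, D'' \rangle = \langle D, \Cr_I D'' \rangle$ (since $\Cr_I$ preserves the pairing and is an involution), and $\Cr_I D''$ is again a $(-1)$ class by Proposition~\ref{prop:sd}; one needs that its degree is still $< d$, which should follow because $\deg(\Cr_I D'') \leq$ something controlled — this is the delicate bookkeeping step. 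If $\Cr_I D$ has all multiplicities nonnegative, we would like to conclude by induction that $\Cr_I D \in W_{n,s} \cdot E_i$, hence $D \in W_{n,s}\cdot E_i$, hence $D$ is a $(-1)$ class by Corollary~\ref{cor:weyl2}. If instead some multiplicity of $\Cr_I D$ is negative, then as in the proof of Theorem~\ref{thm:weyl3}, the numerical conditions force $\Cr_I D = E_i$ for some $i$, and again $D \in W_{n,s} \cdot E_i$.

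The main obstacle I anticipate is controlling the degrees of $(-1)$ classes under Cremona transformations so that the induction hypothesis genuinely applies — specifically, establishing that the condition "$\langle D, D'\rangle \geq 0$ for all $(-1)$ classes $D'$ of smaller degree" is inherited by $\Cr_I D$. The subtlety is that $\Cr_I$ can \emph{increase} the degree of some $(-1)$ classes while decreasing others, so one cannot naively transport the hypothesis. The resolution should be that because $\Cr_I$ is an involution preserving the pairing, testing $\Cr_I D$ against a small-degree $(-1)$ class $D''$ is equivalent to testing $D$ against $\Cr_I D''$, and one must show $\deg(\Cr_I D'') < \deg(D)$ whenever $\deg(D'') < \deg(\Cr_I D)$; this likely requires an explicit degree inequality using $k > 0$ together with the numerical constraints \eqref{eq:num} on both $D$ and $D''$. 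A secondary subtlety worth flagging is the case where $\Cr_I D$ acquires a negative multiplicity: one must check that the intersection hypothesis on $D$ is actually enough to rule out pathologies here, or rather that the conclusion $\Cr_I D = E_i$ still goes through purely numerically as in Theorem~\ref{thm:weyl3}. Finally, I would remark (cf.\ the remark after Theorem~\ref{thm:weyl3} and Examples~\ref{eg:ex1}, \ref{eg:NotEffective}) that the effectivity hypothesis is essential precisely because the condition $d \geq m_i$ needed to invoke Theorem~\ref{thm:noether2} is not Cremona-invariant without it.
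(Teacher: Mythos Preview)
Your forward direction has a real error: for $n>2$ the Dolgachev--Mukai pairing is \emph{not} the intersection product of divisors, so you cannot invoke Bezout to conclude $\langle D,D'\rangle\ge 0$ for distinct irreducible effective $D,D'$. The paper instead proves a Base Locus Lemma (Lemma~\ref{lem:base locus lemma}): if $D$ is effective and $F$ is a $(-1)$ class with $\langle D,F\rangle<0$, then $F$ is a fixed component of $D$. This is established by writing $F=\omega(E_i)$ via Theorem~\ref{thm:weyl3} and observing $\langle D,F\rangle=\langle \omega^{-1}(D),E_i\rangle$ is minus a multiplicity. Irreducibility of $D$ then gives the desired $\langle D,D'\rangle\ge 0$.

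For the converse, the obstacle you flag---showing the intersection hypothesis is inherited by $\Cr_I D$---is genuine, and the paper does \emph{not} attempt it. Instead the paper argues the \emph{contrapositive}: assume $D$ is effective, satisfies \eqref{eq:num}, but is \emph{reducible}, say $D=D_1+D_2$, and produce a $(-1)$ class $D'$ of smaller degree with $\langle D,D'\rangle<0$. The point is that reducibility is trivially preserved by Cremona: $\overline{D}:=\Cr_I D=\Cr_I D_1+\Cr_I D_2$ is still reducible, still effective, still satisfies \eqref{eq:num}, and has strictly smaller degree. The induction hypothesis (the full biconditional, applied to $\overline{D}$) then furnishes a $(-1)$ class $F$ with $\deg F<\deg\overline{D}$ and $\langle\overline{D},F\rangle<0$; setting $D':=\Cr_I F$ and using that $\Cr_I$ is an involution preserving the pairing gives $\langle D,D'\rangle<0$. (If $\overline{D}$ acquires a negative multiplicity $m'_j<0$, one takes $F=E_j$ directly.) So the bookkeeping you were worried about---controlling how Cremona moves degrees of \emph{all} $(-1)$ classes---is completely bypassed: you only need to transport one property through $\Cr_I$, and reducibility is the right one.
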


The proof of the theorem requires the following lemma---a base locus lemma for divisorial  $(-1)$ classes. This is proved in \cite[Corollary 4.5]{LU}, so we omit the proof. 

\begin{lemma}[Base locus lemma \cite{LU}]\label{lem:base locus lemma}
Fix an effective divisor $D$ and let $F$ be a divisorial $(-1)$ class satisfying
\[
-k_F=\langle D, F \rangle< 0.
\]
Then
$F$ is a fixed component of $D$ with multiplicity of containment $k_F>0$.
\end{lemma}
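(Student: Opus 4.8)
The plan is to mimic the classical argument for $(-1)$ curves on surfaces: if $\langle D, F\rangle < 0$, then $F$ must be forced out of the linear system $|D|$ as a fixed component, because an irreducible divisor cannot meet another divisor negatively unless the second one contains it. First I would write $D$ as an effective divisor and decompose it into irreducible components. Since $F$ is a $(-1)$ class, it is in particular irreducible and reduced by Definition~\ref{def:-1div}. If $F$ is \emph{not} among the irreducible components of $D$, then $D$ and $F$ have no common component, and I would argue that $\langle D, F\rangle \geq 0$ in this case — this is the crux and I will come back to it. Granting that, we conclude $F$ must appear in $D$, say $D = k_F F + D'$ with $k_F \geq 1$ the multiplicity of containment and $D'$ effective.

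To pin down the exact value $k_F = -\langle D, F\rangle$, I would iterate: write $D = k F + D'$ with $k \geq 1$ maximal so that $D'$ is effective and does not contain $F$ as a component. Then by bilinearity $\langle D, F\rangle = k\langle F, F\rangle + \langle D', F\rangle = -k + \langle D', F\rangle$. Since $D'$ and $F$ share no component, the same positivity statement gives $\langle D', F\rangle \geq 0$, whence $-\langle D, F\rangle \leq k$. For the reverse inequality, I would use that $D'$ being effective forces $\langle D', F\rangle \geq 0$ combined with the fact that $k$ was chosen maximal, so that $D' - F$ is not effective; but one also needs $\langle D', F \rangle = 0$, which would follow if we knew that an effective divisor sharing no component with the irreducible $F$ meets it in a nonnegative way and, when that meeting number is positive, $F$ would have to be "used up" — this requires a little care, and in practice the cleanest route is to establish $\langle D', F\rangle = 0$ directly from maximality together with the positivity lemma, giving $k_F = -\langle D, F \rangle$ exactly.

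The main obstacle is the positivity claim: \emph{if $G$ is effective and shares no irreducible component with the $(-1)$ class $F$, then $\langle G, F\rangle \geq 0$}. In the surface case this is immediate from the geometric intersection pairing (two distinct irreducible curves meet nonnegatively). For $n \geq 3$, however, the Dolgachev--Mukai pairing is \emph{not} the geometric intersection product, so this needs a separate argument. The natural approach is to use Nagata's correspondence (Theorem~\ref{thm:weyl3}): write $F = w(E_i)$ for some $w \in W_{n,s}$, so that $\langle G, F\rangle = \langle w^{-1}(G), E_i\rangle$ by $W_{n,s}$-invariance of the pairing (Theorem~\ref{thm:cc}). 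Since $w^{-1}$ is a composition of standard Cremona transformations which by Corollary~\ref{cor:cre} preserve effectivity, $w^{-1}(G)$ is effective; and for an effective divisor $G' = d'H - \sum m_i' E_i$ one has $\langle G', E_i\rangle = m_i' \geq 0$, using the elementary fact that effective divisors on $X_{n,s}$ have nonnegative multiplicities at the points. The only subtlety is to verify that $w^{-1}$ carries "does not contain $F$ as a component" to "has nonnegative $E_i$-coefficient," i.e. that the reduction does not introduce a fixed copy of $E_i$; this should follow from tracking the Cremona transformations one at a time and invoking irreducibility of $F$ exactly as in the proof of Theorem~\ref{thm:weyl3}.

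Once the positivity claim is in hand, the rest is the bookkeeping sketched above, and the statement $k_F = -\langle D, F\rangle > 0$ follows. I would organize the write-up as: (i) state and prove the positivity claim via Weyl-invariance and Corollary~\ref{cor:cre}; (ii) deduce that $F$ is a component of $D$; (iii) extract $k_F$ by the bilinearity computation.
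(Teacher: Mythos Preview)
Your proposal is correct and uses the same essential idea as the paper: reduce to the case $F=E_i$ via Theorem~\ref{thm:weyl3}, using Weyl-invariance of the pairing (Theorem~\ref{thm:cc}) and preservation of effectivity (Corollary~\ref{cor:cre}). The difference is that the paper does this in one stroke rather than through your detour. Writing $F=\omega(E_i)$ and $G:=\omega^{-1}(D)$, the paper simply computes $\langle G,E_i\rangle=\langle D,F\rangle=-k_F$, which says directly that the $E_i$-coefficient of the effective class $G$ equals $-k_F$; hence $E_i$ is a fixed component of $G$ with multiplicity $k_F$, and transforming back by $\omega$ finishes. There is no separate positivity claim, no irreducible decomposition of $D$, and no iterative bookkeeping to extract $k$.

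Two remarks on your version. First, the assertion that ``effective divisors on $X_{n,s}$ have nonnegative multiplicities at the points'' is false as stated---an effective class can have $m_i'<0$, and this is exactly the mechanism the lemma exploits---so your positivity claim genuinely requires transporting the ``no common component with $F$'' hypothesis through $w^{-1}$ to ``no $E_i$-component'', the very subtlety you flag; the paper's direct argument never needs the contrapositive form and so never encounters this issue. Second, your difficulty with the reverse inequality $k\leq k_F$ dissolves in the paper's framing: since the $E_i$-coefficient of $G$ is \emph{exactly} $-k_F$, the class $G-k_F E_i=\omega^{-1}(D-k_F F)$ has $E_i$-coefficient zero, which pins down the multiplicity without any appeal to maximality.
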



\begin{example}
Lemma~\ref{lem:base locus lemma} requires a divisorial $(-1)$ class. Notice that for $n=2$, the multiplicity of containment of a curve in 
the base locus of a divisor $D\in \Pic(X_{3,9})$ is given by intersection pairing only for $-1$ curves. Indeed, for any positive integer $m$ let us consider the divisor
\begin{equation}\label{eq:multiple}
D:=3mH-mE_1-\ldots-mE_9.
\end{equation}
Take $E:=3H-E_1-\ldots-E_9$ the unique cubic curve passing through nine points. $E$ is an elliptic curve (therefore not a $(-1)$ curve) and notice 
\[
k_E=D\cdot E=9m-9m=0.
\]
By Ciliberto-Miranda \cite{cm2} the Gimigliano-Harbourne-Hirschowitz conjecture \eqref{conj:GHH} holds when the number of points $s$ is a perfect square and when all multiplicities are equal. Therefore
\[\dim H^0(X,\mathcal{O}_X(D))=\chi(X, \mathcal{O}_X(D))=\binom{3m+2}{2}-9\binom{m+1}{2}=1.\]
This implies that projectively $|D|$ contains just one element, and since $mE$ is an element of form \eqref{eq:multiple} we conclude
\[
D=mE.
\]

We conclude that even though $k_E=0$, the elliptic curve $E$ is contained in the base locus of $D$ precisely $m$ times.

\end{example}

We now proceed with the proof of Theorem~\ref{thm:do2}. 

\begin{proof}[Proof of Theorem~\ref{thm:do2}] 
First notice that an effective divisor can never have a negative intersection with a $(i)$-divisorial class with $i\in\{0,1\}$. For example with $i=0$,  assume there exist an effective divisor $D$ say that $\langle D, H_{n-1}\rangle<0$. This is equivalent to the inequality that
\[
(n-1)d-\sum_{i}^{n-1} m_i<0.
\] 
Indeed, $H_{n-1}$ is in particular an effective divisor therefore $d\geq m_i$ for every $1\leq i\leq n-1$, leading to a contradiction. This observation can be generalized to any divisorial $(i)$ class for $i\in\{0,1\}$. 

Proceeding with the proof, assume $D$ is a divisorial $(i)$ class that fails the last condition of the statement of the theorem. We may therefore assume $i=-1$. Then there exists a divisorial $(-1)$ class $D'$ of degree $d'$ smaller than $d$, so that $\langle D, D' \rangle< 0$. By Lemma~\ref{lem:base locus lemma}, $D'$ is a fixed component of $D$ so $D$ can be written as the sum of two divisors $D'$ and $D''$,
\[
D=D'+D''.
\]
This gives a contradiction since $D$ is irreducible.

Conversely, assume that $D$ satisfies the conditions \eqref{eq:num}, we will use induction on $d=\deg D$ to show that $D$ is irreducible.

\begin{enumerate}
\item For the base case $i=-1$, $d=0$, conditions \eqref{eq:num} imply that $m_1=-1$ and all other multipilicities are zero, so $D=E_1$ a $(-1)$ Weyl divisor by Lemma~\ref{lem:exceptional} (a similar argument holds for $d=1$). 
 We remark that $d=0$ and $i\in\{0,1\}$ conditions \eqref{eq:num} have only the trivial solution $D=0$, so we assume that the degree is positive.
\item If $i\in\{-1, 0,1\}$ and $d=1$, one obtains $m_k\geq 0$ with $(n-1)-\sum_{k=1}^s m_k^2=i$ and also $(n+1)-\sum_{k=1}^{s} m_k=2+i$.  If you subtract the two equations you obtain $\sum_{k=1}^{s} m_k \cdot (m_k - 1)=0$ i.e. $m_k\in \{0,1\}$ for all $k\in \{1,\ldots,s\}$.
\end{enumerate}

Therefore we have equality so $s=n-1-i$ and $m_k=1$ for all $k\in \{1,s\}$. For $d=1$ we concluded that $D$ is an $(i)$ divisorial class via Lemma \ref{lem:exceptional}.

Let $d\geq 1$. By induction hypothesis we know the theorem holds for all divisors $D'$ of degree $d'<d$. 
We prove the remainder of the statement contrapositively, namely, assuming $D$ fails the irreducibility condition, we will prove that there exist a divisorial $(i)$ class $D'$ of degree $d'<d$ so that $\langle D, D' \rangle< 0$. By induction, we know this statement holds for divisors of degree smaller than $D$ and we want to prove it for $D$.

If $D$ is not irreducible then there exist $D_1$ and $D_2$---two effective divisors---satisfying
\[
D=D_1+D_2.
\]

We assume all multiplicities of $D$ are non-negative, if not the statement follows trivially.
By Theorem~\ref{thm:noether2} for $c=i$ and $e=0$, there exist indices $i_1,\ldots,i_{n+1}$ so that
\[
m_{i_1}+\ldots+m_{i_{n+1}}> (n-1)d
\]

Applying a standard Cremona transformation based on set $I=\{i_1, \ldots, i_{n+1}\}$, denote by $\overline{D}=\Cr_I D$. Then $\overline{D}$ is effective, satisfies conditions \eqref{eq:num} by Theorem~\ref{thm:cc}, and has smaller degree than $D$. But $\overline{D}$ is not irreducible since 
\[
\overline{D}=\Cr_I(D)=\Cr_I (D_1)+\Cr_I(D_2).
\]
By the induction hypothesis on $\overline{D}$, there exists a divisorial $(-1)$ class $F$, of degree smaller than degree of $D$ so that 
\[
\langle \overline{D}, F \rangle< 0.
\]

We perform a standard Cremona transformation on the index set $I$ and denote by $D':=\Cr_I(F)$. Proposition~\ref{prop:sd} implies that $D'$ is a divisorial $(-1)$ class. Theorem~\ref{thm:cc} implies that 
\begin{align*}
\langle D, D' \rangle&=\langle \Cr\Cr(D), \Cr(F) \rangle\\
&=\langle \Cr(D), F \rangle\\
&=\langle \overline{D}, F \rangle\\
&<0
\end{align*}
This proves the claim.
\end{proof}

The next example shows why irreducibility is needed in Definition~\ref{def:-1div} and why last condition of Theorem~\ref{thm:do2} is needed. This is a generalization
 to dimension three of Example~\eqref{eg:ex1}  of \cite{DO}.

\begin{example}\label{eg:NotIrred} 
Take $i\in\{-1,0,1\}$ and $D\in \Pic(X_{3,8-i})$.
\[
D:=4H-3E_1-3E_2-3E_3-E_4-\ldots-E_{8-i}
\]
Notice that 
\begin{align*}
\langle D, D \rangle&=32-27-(5-i)=i\\
\adeg D &= \frac{\langle D, -K_{X_{3,8-i}} \rangle}{2}=16-9-(5-i)=2+i
\end{align*}
Notice that no divisor in the linear system $|D|$ is irreducible. Indeed, let $H_{123}$ denote the hyperplane passing through the first three points. Lemma~\ref{lem:base locus lemma} implies that hyperplane $H_{123}$ is a fixed component of $D$ since
\[
\langle D, H_{123} \rangle=8-(3+3+3)=-1<0.
\]
Let 
\[
D_1:=D-H_{123}=3H- 2E_1-2E_2-2E_3-E_4-\ldots-E_{8-i}
\]
and notice that $D_1$ (and therefore $D$) is effective since
\begin{equation*}
\begin{split}
\dim H^0(X_{3,8-i}, \mathcal{O}_{X_{3,8-i}}(D_1))-\dim H^1(X_{3,8-i}, \mathcal{O}_{X_{3,8-i}}(D_1))&=\chi(X_{3,8-i}, \mathcal{O}_{X_{3,8-i}}(D_1))\\
&=\binom{6}{3}-3\binom{4}{3}-(5-i)\\
&=3+i>0.
\end{split}
\end{equation*}

\end{example}

\section{Mori Dream Spaces.}\label{s:MDS} 

We end with a few results about Mori Dream Spaces. As before, let $X_{n,s}$ denote blow up of projective space at a collection of $s$ general points and let $W_{n,s}$ be the Weyl group of $X_{n,s}$. It is well known that $X_{n,s}$ is a Mori Dream Space (MDS) whenever $s\leq n+3$; the birational geometry of this space is studied in \cite{AC}, \cite{AM}, \cite{bdp3}, and \cite{Moon}. If $s\geq n+4$, then $X_{n,s}$ is generally not a MDS with the following notable exceptions:

\begin{itemize}
	\item all Del Pezzo surfaces $X_{2, s}$ with $s\leq 8$, 
	\item $X_{3, 7}$ and 
	\item $X_{4, 8}$.
\end{itemize} 
In fact, explicit generators are known for the Cox rings in all of these exceptional cases except the last one (see \cite{park}).

In \cite{CDD} the authors study birational properties of MDS and prove that if $n+1\leq s\leq n+3$, the movable cone of $X_{n,s}$ is the intersection between the Effective cone $\Eff_{\mathbb{R}}(X_{n,s})\subseteq N^1(X_{n,s})_{\mathbb{R}}$, and the dual of the Effective cone $\Eff_{\mathbb{R}}(X_{n,s})^{\vee}$ under the Dolgachev-Mukai pairing.  
In other words,
\[
\Mov(X_{n,s})= \Eff _{\mathbb{R}}(X_{n,s})\cap \Eff_{\mathbb{R}}(X_{n,s})^{\vee}.
\]
This description of the movable cone relies on an interesting description of the orbit $W_{n,s}\cdot E_i$; this description is provided in \cite[Theorem 4.6]{CDD}, which basically says that if $n+1\leq s\leq n+3$, then 
\[
W_{n,s}\cdot E_k= \{D\in \Eff(X_{n,s})| \adeg(D)=1 \}.
\]
The next result, proved in \cite[Theorem 2.7 and 2.8]{CT}, strengthens that theorem to include $s\leq n$.

\begin{theorem}\label{thm:we} Let $s\leq n+3$ with $n\geq 2$. Then the orbit of the Weyl group on $W_{n,s}$ on the exceptional divisor $E_k$ can be described as
	\[
	W_{n,s}\cdot E_k = \{D\in \Eff(X_{n,s})| \adeg(D)=1 \}.
	\]
\end{theorem}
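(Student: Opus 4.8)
The plan is to prove the two inclusions separately, using the Weyl group to reduce to a normal form. The inclusion $W_{n,s}\cdot E_i \subseteq \{D\in\Eff(X_{n,s})\mid \adeg(D)=1\}$ is the easy direction: each $E_i$ is effective with $\adeg(E_i)=1$ (this is Lemma~\ref{lem:exceptional}), and by Corollary~\ref{cor:cre} the Weyl group preserves effectivity, while by Theorem~\ref{thm:cc}\eqref{item:CrIntersection} it preserves $\adeg$. Since every $w\in W_{n,s}$ is a composition of standard Cremona transformations (Remark~\ref{rem:composition}), both properties are preserved along the whole orbit. So the substance is the reverse inclusion.

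For the reverse inclusion, suppose $D=dH-\sum m_iE_i$ is effective with $\adeg(D)=1$; I want to show $D\in W_{n,s}\cdot E_i$ for some $i$. Effectivity forces $d\geq m_i\geq 0$ for all $i$ (and $d\geq 0$). I would argue by induction on $d=\deg(D)$. If $d=0$, then $\adeg(D)=1$ gives $\sum m_i = -(n-1)\cdot 0 \cdot \text{(...)} $ — more precisely $\adeg(D)= \tfrac{1}{n-1}\langle D,-K\rangle = \tfrac{1}{n-1}((n+1)d - (n-1)\sum m_i)$, so $d=0$ forces $-\sum m_i = 1$, impossible with $m_i\geq 0$ unless we allow $D=E_i$, i.e. one $m_i=-1$; I would handle $D=E_i$ as the genuine base case. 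If $d=1$: $\adeg(D)=1$ gives $(n+1) - (n-1)\sum m_i = n-1$, so $\sum m_i = 2/(n-1)\cdot\ldots$ — again I would just check directly that the only effective possibility (using $m_i\in\{0,1\}$ when $d=1$ and $s\leq n+3$) is the hyperplane through $n$ points, which is $T_s(E_{n+1})$-type, hence in the orbit. For $d\geq 2$, the key is to apply a standard Cremona transformation to strictly decrease the degree: I need indices $i_1<\dots<i_{n+1}$ with $m_{i_1}+\dots+m_{i_{n+1}} > (n-1)d$, because then $k>0$ in \eqref{eq:crem} and $\deg(\Cr_I D) < \deg D$.

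The main obstacle is establishing that degree-reducing Cremona move, and here is where $s\leq n+3$ enters crucially: I would invoke Theorem~\ref{thm:noether2}. For that I need $\adeg(D)=c+2$ and $\langle D,D\rangle = c+e$ with $-2\leq c,e\leq 1$; so $c=-1$, and I need $-2\leq \langle D,D\rangle - (-1) \leq 1$, i.e. $-3\leq \langle D,D\rangle \leq 0$. This is exactly where $s\leq n+3$ is used: for effective $D$ with $\adeg(D)=1$ on $X_{n,s}$ with few points, one should be able to bound $\langle D,D\rangle$ from below (the anticanonical-degree-$1$ condition pins down $\sum m_i$ in terms of $d$, and with only $s\leq n+3$ multiplicities, each at most $d$, the quantity $(n-1)d^2 - \sum m_i^2$ cannot be too negative). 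I would make this bound precise as a separate short lemma: \emph{if $s\leq n+3$, $D$ effective, $\adeg(D)=1$, then $\langle D,D\rangle\in\{-3,-2,-1,0\}$} — likely by using the constraint $\sum m_i = \tfrac{(n+1)d-(n-1)}{n-1}$ together with the inequality $\sum m_i^2 \leq d\sum m_i$ (from $m_i\leq d$) and a lower bound on $\sum m_i^2$ coming from $s\leq n+3$ terms. Once Theorem~\ref{thm:noether2} applies, $\Cr_I D$ is effective (Remark~\ref{rem:cr}\eqref{item:CrEffective}) with $\adeg(\Cr_I D)=1$ (Theorem~\ref{thm:cc}) and strictly smaller degree; if all its multiplicities are $\geq 0$ I finish by induction, and if one multiplicity is negative then, as in the proof of Theorem~\ref{thm:weyl3}, irreducibility-free bookkeeping of \eqref{eq:crem} forces $\Cr_I D = E_j$ directly, so $D\in W_{n,s}\cdot E_j$. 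The only real work beyond invoking earlier results is the self-intersection bound, and verifying the $d\leq 1$ base cases explicitly for $s\leq n+3$.
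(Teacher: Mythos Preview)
Your easy inclusion is fine, and the idea of reducing degree via a Cremona transformation is natural, but the reverse inclusion has a real gap at the ``separate short lemma''. First a computational slip: from \eqref{eq:pair} and \eqref{eq:canonical} one has $\adeg(D)=(n+1)d-\sum_i m_i$ (see \eqref{eq:num}), not $\tfrac{1}{n-1}\bigl((n+1)d-(n-1)\sum m_i\bigr)$; so $\adeg(D)=1$ gives $\sum m_i=(n+1)d-1$. With this correction, your proposed inequalities do not produce the bound you need. The inequality $\sum m_i^2\le d\sum m_i$ (from $m_i\le d$) yields only
\[
\langle D,D\rangle=(n-1)d^2-\sum m_i^2\ \ge\ (n-1)d^2-d\bigl((n+1)d-1\bigr)=-2d^2+d,
\]
which tends to $-\infty$, while the Cauchy--Schwarz bound $\sum m_i^2\ge(\sum m_i)^2/s$ controls $\langle D,D\rangle$ from \emph{above}, not below. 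To invoke Theorem~\ref{thm:noether2} with $c=-1$ you need $-3\le\langle D,D\rangle\le 0$, and the lower bound is precisely what you cannot get this way. In fact, since the Weyl group preserves self-intersection, the theorem you are proving forces $\langle D,D\rangle=-1$ on the nose; an a~priori bound on $\langle D,D\rangle$ is therefore essentially equivalent to the result, and any argument that supplies it must use the structure of $\Eff(X_{n,s})$ for $s\le n+3$ in a serious way, not just $m_i\le d$. There is also a smaller gap at the end: when $\Cr_I D$ acquires a negative multiplicity you appeal to the step in the proof of Theorem~\ref{thm:weyl3}, but that step uses irreducibility of $D$, which you have not assumed.

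The paper takes an entirely different route and avoids Theorem~\ref{thm:noether2} altogether. For the substantive range $n+1\le s\le n+3$ it simply cites \cite[Theorem~4.6]{CDD}, where the explicit description of the effective cone of these Mori dream spaces does the work. Only the easy range $s\le n$ is treated directly: there one combines $\sum m_i=(n+1)d-1$ with $\sum m_i\le sd\le nd$ to force $d\le 1$, and then reads off $D$ by inspection. So the missing ingredient in your approach---control of $\langle D,D\rangle$---is exactly what the cited effective-cone description provides, and there is no shortcut via the elementary inequalities you propose.
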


Property~\eqref{item:globalsections} of Remark~\ref{rem:cr} and $\dim H^0(X_{n,s}, \mathcal{O}(E_i))=1$ make the following statement obvious.
\begin{remark} For $X_{n, s}$ then $W_{n,s}\cdot E_i \subset \{D\in \Eff(X_{n,s})| \adeg(D)=1 \}$, the equality of Theorem~\ref{thm:we} holds only for $s\leq n+3$. We give two relevant examples of divisors on MDS $X_{n,s}$ so that $s\geq n+4$ and Theorem~\ref{thm:we} doesn't hold.
	\begin{enumerate}
		\item Consider $\displaystyle D:=3H-\sum_{i=1}^8 E_i$ on $X_{2, 8}$. Divisor $D$ is effective (an elliptic curve) and it has $\adeg(D)=1$ but $D\notin W_{2,8} \cdot E_i$, since $\langle D,D\rangle=1$.
		\item Consider $\displaystyle D:= 2H -\sum_{i=1}^7 E_i$ on $X_{3, 7}$. The divisor $D$ is effective (a quadric surface) and it has $\adeg (D)=1$ but $D\notin W_{3,7}\cdot E_i$, since $\langle D,D\rangle=1$.
	\end{enumerate}
\end{remark}

\begin{remark}
	Theorems~\ref{thm:weyl3} and \ref{thm:we} imply that for $s\leq n+3$, if $D\geq 0$, and $\adeg D =1$, then $D$ is irreducible, and $\langle D,D \rangle=-1$. This is only true for MDS as we have seen. 
	
	A natural question to ask is whether any two of the above conditions imply the other two; we saw a similar phenomenon in Lemma~\ref{lem:eq}. The answer is generally no. For example, if we consider the divisor $D:=3H-3E_1-3E_2-3E_3-E_4$ in $X_{4,4}$. For this divisor, we have $\langle D, D \rangle = - 1$ and $D$ is effective. However, it is easy to see that the divisor $D$ is not irreducible---consisting of the hyperplane through all four points and a quadric double at the first three points---and $\adeg D= \frac{\langle D, -K_{X_{4,4}} \rangle}{3} = 5\neq 1$. 
\end{remark}

We end this section with a result in dimension 2. For a small number of points in two dimensions, the following result is known to hold on $X_{2, s}$ when $s< 9$ (see e.g. \cite{dm2}; for $s=9$ there is an infinite list of $(-1)$ curves). For completeness, we include the proof for $s=9$. Notice that this criterium is much simpler than Theorem~\ref{thm:do} (see for example \cite{DO}).

\begin{lemma}\label{lem:s=9} If $s\leq 9$, a divisor $D$ on $X_{2, s}$ is a $(-1)$ curve if and only if 
	\[
	\langle D, D\rangle=\langle D , K_X\rangle= -1.
	\]
\end{lemma}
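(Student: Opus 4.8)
The plan is as follows. The forward implication is immediate: if $D$ is a $(-1)$ curve then by Definition~\ref{def:-1 curve} it is irreducible and rational with $D\cdot D=-1$, hence $p_a(D)=0$, and Lemma~\ref{lem:eq} (the conditions $D\cdot D=-1$ and $p_a(D)=0$ being in force) yields $D\cdot(-K_X)=1$, i.e. $D\cdot K_X=-1$. So all the content is in the converse, which amounts to proving an irreducibility statement.

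For the converse, suppose $D\cdot D=D\cdot K_X=-1$. By Lemma~\ref{lem:eq} the two remaining conditions hold as well; in particular $\chi(X,\mathcal{O}_X(D))=1$, so by Remark~\ref{rem:eff} the class $D$ is effective, and $p_a(D)=0$. It now suffices to show that the effective divisor representing $D$ is irreducible, since an irreducible curve of arithmetic genus $0$ with self-intersection $-1$ is automatically smooth rational, hence a $(-1)$ curve. Write $D=\sum_{\alpha}a_\alpha C_\alpha$ with the $C_\alpha$ distinct prime divisors and $a_\alpha\ge 1$. The essential point is that $-K_X$ is nef on $X_{2,s}$ for every $s\le 9$: for $s\le 8$ it is ample (these are del Pezzo surfaces), and for $s=9$ it is represented by the proper transform of the unique plane cubic through the nine general points, which is irreducible (no three of the points being collinear and no six lying on a conic, a reducible cubic could contain at most $2+5=7$ of them) with $(-K_X)^2=0$. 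Hence $C_\alpha\cdot(-K_X)\ge 0$ for every $\alpha$, and since $\sum_\alpha a_\alpha(C_\alpha\cdot(-K_X))=D\cdot(-K_X)=1$ is a sum of nonnegative integers, there is a unique component $C_0$ with $C_0\cdot(-K_X)=1$; it occurs with multiplicity $a_0=1$, and every other component satisfies $C_\alpha\cdot(-K_X)=0$.

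If $s\le 8$, ampleness of $-K_X$ excludes any prime divisor $C_\alpha$ with $C_\alpha\cdot(-K_X)=0$, so $D=C_0$ and we are done. Assume $s=9$ and let $C_\alpha$ be a component with $C_\alpha\cdot K_X=0$. Adjunction gives $C_\alpha^2=2p_a(C_\alpha)-2$; since the intersection form on $\Pic(X_{2,9})$ is $\mathrm{diag}(1,-1,\dots,-1)$, of signature $(1,9)$, and $K_X^2=0$, its orthogonal complement $K_X^{\perp}$ is negative semidefinite, so $C_\alpha^2\le 0$ and therefore $C_\alpha^2\in\{0,-2\}$; the value $-2$ is impossible by Proposition~\ref{prop:dolg1}. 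Thus $C_\alpha^2=0$, so $C_\alpha$ and $-K_X$ span a totally isotropic subspace of $\Pic(X_{2,9})\otimes\mathbb{R}$, which forces $C_\alpha$ proportional to $-K_X$ (a quadratic form of signature $(1,9)$ has no totally isotropic plane); since $-K_X$ is primitive and $C_\alpha$ is effective, $C_\alpha=\lambda_\alpha(-K_X)$ with $\lambda_\alpha\ge 1$. Consequently $D=C_0+b(-K_X)$ for some integer $b\ge 0$, and $b\ge 1$ as soon as $D$ is reducible (the multiplicity of $C_0$ being pinned to $1$). But then $-1=D\cdot D=C_0^2+2b(C_0\cdot(-K_X))+b^2K_X^2=C_0^2+2b$, whereas adjunction together with $C_0\cdot K_X=-1$ gives $C_0^2=2p_a(C_0)-1\ge -1$; hence $b\le 0$, so $b=0$ and $D=C_0$ is irreducible, completing the proof.

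The one genuinely delicate point is the case $s=9$: there $-K_X$ is merely nef, not ample, so one cannot bound the number of components simply by intersecting with $-K_X$, and one must isolate the anticanonical curve as the only possible extraneous component and then eliminate it using the constraint $D\cdot D=-1$. (Alternatively, one may derive the lemma from Theorem~\ref{thm:do}: it remains to check $D\cdot C\ge 0$ for every $(-1)$ curve $C$ of smaller degree, and if this failed the base-locus Lemma~\ref{lem:base locus lemma} would write $D=kC+D'$ with $D'$ effective; then $D'\cdot(-K_X)=1-k\le 0$ forces $k=1$, after which $D\cdot D=-1$ forces $D'=0$, contradicting $\deg C<\deg D$.)
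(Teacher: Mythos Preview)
The paper does not actually give a proof of this lemma; it is simply stated as a known result, with a pointer to \cite{DO}. So there is no ``paper's own proof'' to compare against.

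Your argument is correct. The forward direction is immediate from Lemma~\ref{lem:eq}. For the converse, the key observation---that $-K_X$ is nef on $X_{2,s}$ for every $s\le 9$---is exactly the right one: it pins down the decomposition of $D$ into irreducible components via $D\cdot(-K_X)=1$. For $s\le 8$ ampleness finishes immediately, and your treatment of the borderline case $s=9$ is clean: you use the signature $(1,9)$ of the intersection form to see that $K_X^{\perp}$ is negative semidefinite with radical $\mathbb{R}K_X$, invoke Proposition~\ref{prop:dolg1} to exclude $(-2)$-components, and then the computation $D^2=C_0^2+2b$ together with $C_0^2\ge -1$ forces $b=0$.

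Your parenthetical alternative via Theorem~\ref{thm:do} and Lemma~\ref{lem:base locus lemma} is also valid, but the sentence ``$D\cdot D=-1$ forces $D'=0$'' is a bit compressed. Spelling it out: from $D=C+D'$ with $C^2=-1$ and $D\cdot C=-1$ one gets $C\cdot D'=0$; then $D^2=-1$ gives $D'^2=0$, while $D'\cdot(-K_X)=0$ places $D'$ in the isotropic line $\mathbb{R}(-K_X)$, so $D'=m(-K_X)$ with $m\ge 0$, and $C\cdot D'=m$ forces $m=0$.
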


The following remark will be useful for the proof. 

\begin{remark}\label{effective cone}
It is known that the only effective cone of divisors for $X_{2,9}$ is tangent to $F$. In other words, for any effective divisor $G$ of $X_{2,9}$ with	
\[
F\cdot G=0 \text{ then there exists } m>0 \text{ so that } G=mF.
\]
	
\end{remark}

\begin{proof}[Proof of Lemma~\ref{lem:s=9}] It is enough to prove that any divisor $D\in\Pic(X_{2,9})$ is irreducible, if 
\[
D^2=-1 \text{ and } K_X \cdot D=-1.
\]
Indeed, Proposition \ref{lem:eq} implies that the divisor $D$ is effective since
$\chi(X, \mathcal{O}_X(D)) =  1$ so $h^1(X, \mathcal{O}_X(D))\geq 1$. 
	
We denote by $F=-K_X$ the anticanonical divisor of $X=X_{2,9}$, consisting of a unique cubic through nine points. Then $F^2=0$ and $F$ is a nef divisor on $X$.
	
Denote the irreducible components of $D$ by $D_i$, i.e. 
\[
D=D_1+\ldots +D_r
\]
Then $F$ is nef and integral and 
\[
1=F \cdot D=\sum F \cdot D_i.
\]

Without loss of generality, we we may assume $D_1$ is the irreducible component of $D$ that meets $F$, i.e. 	
\[
F\cdot D_1=1 \text{ and } F\cdot D_i=0, \text{ for } i\geq 2.
\]
	
The effective divisor $D-D_1\in \Pic(X_{2,9})$ satisfies $(D-D_1)\cdot F=0$, so by Remark \ref{effective cone}, there exists an integer $m>0$ so that 
\[
D-D_1=mF 
\]
	
Hence, we have
\[
D^2=(D_1+mF)^2=D_1^2+2mD_1F+m^2F^2.
\]
	
Now $D_1\cdot F=1$ and $F^2=0$, thus 
\[
-1=D^2=D_1^2+2m
\]
and therefore
\[
D_1^2=-1-2m.
\]	

Since by assumption $K\cdot D_1=-1$, the arithmetic genus formula implies that
\[
p_a(D_1)=\frac{D_1^2-1}{2}+1=\frac{D_1^2+1}{2}\geq 0
\]
We obtain that $D_1^2\geq -1.$
	
Since $D_1^2=-1-2m$ we see that $m=0$ and so $D=D_1$. therefore $D$ is irreducible.

We conclude that $D$ is a $(-1)$ curve on $X=X_{2,9}$.
\end{proof}

\subsection{Moduli Problems.}\label{s:motivation} In this section we will briefly describe further motivation for this work, new directions and connections with other topics, especially related to Mori Dream Spaces. Recall that $X_{n,s}$ is a Mori Dream space for $s\leq n+3$, as well as the spaces $X_{3,7}$ and $X_{4,8}$ and $X_{2,s}$ for $s\leq 8$. 

The birational geometry of Mori Dream Spaces can be encoded in finite data, namely rational polyhedral cones together with their Mori chamber decomposition. 
The chamber decomposition is determined by arbitrary dimensional $(-1)$ classes,  
which, unlike $(-1)$ curves or divisorial $(-1)$ classes, live on the blow up of $X_{n,s}$ along different subvarieties. 
Thus studying such classes can be quite difficult. We will now discuss two connections of the Mori Dream Spaces $X_{n,n+3}$ to other interesting constructions, beginning with the moduli space of certain vector bundles. 
This point of view is discussed, e.g. in \cite{Bau}, \cite{Mu05}, and later in \cite{AC}, \cite{AM}, \cite{bdp3}, \cite{Cas1}, \cite{Cas2}, \cite{CT}, \cite{Moon}.

The geometry of the Mori Dream Space $X_{n,n+3}$ was studied first by Mukai and Bauer due to it's connection 
to the moduli space of rank 2 parabolic vector bundles over $\PP^1$. The birational geometry of the moduli space of rank 2 semistable parabolic vector bundles on a rational curve, in particular the effective cone and all birational models (that correspond to moduli space of parabolic vector bundles with certain weights) was studied in \cite{Moon}.

In order to see the connection, we first fix $n+3$ points $p_1, \ldots, p_{n+3}$ in $\PP^1$. A parabolic rank 2 vector bundle over $\PP^1$ is comprised of the following data:
\begin{itemize}
	\item A vector bundle $E$ of rank 2 over $\PP^1$;
	\item for each $k\in \{1, 2, \dots, n\}$, a 1-dimensional subspace $V_k\subset E|_{p_k}$;
	\item a sequence of rational numbers $\vec{a}=(a_1,\dots,a_n)$ such that $0\leq a_k<1$. 
\end{itemize}

The parabolic slope of $E$ is defined to be
\[
\pardeg E:=\frac{\sum_{k=1}^n a_k}{2}
\]
Two rank 2 semisimple bundles are $S$-equivalent if they have the same factors in their Jordan-Holder filtration.
Denote by $\mathcal{M}_{\vec{a}}$  the moduli space of rank 2, degree 0 equivalence classes (under $S$-equivalence) of semistable parabolic vector bundles 
over $\PP^1$, with parabolic structure $\vec{a}$.

Bauer in \cite{Bau} introduced weight polytope $\Delta\subset [0,1]^{n+3}$---i.e. weights for which the moduli space is non-empty---and studied the chamber decomposition of this space. 
The moduli space $\mathcal{M}_{\vec a}$ varies with weights; for some weights it could be either empty, or it could be $\PP^n$ or $X_{n,n+3}$. 
In fact, Bauer in \cite{Bau} and Mukai in \cite{Mu05} proved that $X_{n,n+3}$ is isomorphic to $\mathcal{M}_{\vec{a}}$ where $\vec{a}=(\frac{1}{n},\ldots, \frac{1}{n})$. 

The GIT information is encoded in this polyhedral data; the chamber containing the central weight $(\frac{1}{2}, \frac{1}{2}, \ldots, \frac{1}{2})\in \Sigma$   
determines the other chambers. 
Furthermore, Mukai in \cite{Mu05} and Araujo and Massarenti in \cite[Theorem 3.4]{AM} proved that there exist a linear projection $\pi:\mathbb{R}^{n+4}\stackrel{}{\rightarrow} \mathbb{R}^{n+3}$ so that \begin{itemize}
	\item $\pi(\Eff(X_{n,n+3}))=\Delta$, 
	\item $\pi(\Mov(X_{n,n+3}))=\Sigma\subset \Delta$, 
	\item $\pi(-K_{X_{n,n+3}})=(\frac{1}{2}, \ldots, \frac{1}{2})$ 
\end{itemize}
and so that the Mori chamber decomposition of $\Eff(X_{n,n+3})$ induces the chamber decomposition of $\Delta$.

Another interesting birational model for $X_{n,n+3}$ when $n$ is even is constructed as follows. Define two quadrics $Q_1$ and $Q_2$ by the following two equations:
\[\sum_{k=1}^{n+3}x_k^2=0 \qquad\sum_{k=1}^{n+3} \lambda_kx_k^2=0
\]
where $\lambda_k$ are some fixed complex numbers. 

Let $Z:=Q_1\cap Q_2\subset \PP^{n+2}$ denote their complete intersection. Set $n=2m$ and let $G$ be the subvariety of the Grassmannian $Gr(m-1,\PP^{n+2})$ parametrizing linear cycles of dimension $m-1$ contained in $Z$.  
Then $G$ is a smooth $n$ dimensional Fano variety with Picard number $n+4$ and is isomorphic to $\mathcal{M}_{\vec{a}}$ where $\vec{a}=(\frac{1}{2}, \frac{1}{2}, \ldots, \frac{1}{2})$ (see \cite{Cas1}). Bauer in \cite{Bau} and Casagrande in \cite{Cas1} proved that varieties $G$ and $X_{n,n+3}$ are strongly birational (i.e. isomorphic in codimension $1$). 

Furthermore, let us denote by $\mathcal{M}:=\{M\cong\PP^m| M\subset Z\}$ the finite set (with cardinality $2^{2m+1}$) of cycles of dimension $m$. 
For every $M\in \mathcal{M}$, define $E_{M}:=\{L=\PP^{m-1}|L\cap M\neq \emptyset\}\in \Eff(G)$. The variety $Z$ has an involution $\sigma_k$ sending $x_k$ to $-x_k$ for each $k\in\{1,\dots,n+3\}$. Now fix an element $M\in\mathcal{M}$, and set $M_k=\sigma_k(M)$.  
Araujo-Casagrande 
in \cite{AC} prove that for a fixed choice of $M$, there is a unique birational map $\rho_M:G\to \PP^n$ inducing a strong birational map $G\to X_{n, n+3}$ with the properties that under this map $E_{M_{k}}$ are sent in the divisorial $(-1)$ classes while  
$E_M$ is sent to the secant variety to the unique rational normal curve of degree $n$ passing through all $n+3$ points in $\PP^n$.

The combinatorial data describing $X_{n,n+3}$ as a Mori Dream space was also independently identified in \cite{bdp3} by Brambilla-Dumitrescu-Postinghel from a different point of view. Even if extremal rays of the Effective and Movable cones of divisors are known, finding the facets of cone with given rays is in general a difficult combinatorial problem. For $X_{n,n+3}$, in \cite{bdp3} the authors emphasize that computation of the facets of the Effective and Movable cones of divisors can be computed via geometry determined by divisorial $(-1)$ classes. More precisely, Theorem~5.1 of \cite{bdp3} highlights that the facets of the Effective cone of divisors of $X_{n,n+3}$ (or Movable cones of divisors) are obtained from the Dolgachev-Mukai pairing with divisorial $(-1)$ classes. 

Currently there is no general definition as in Equation \eqref{eq:num} for arbitrary dimensional $(-1)$ classes; numerical conditions are not known and therefore no rigorous examples on how to construct them. However, in \cite{bdp3} the authors emphasize that cones over the secant variety of a rational normal curve $J(L_I, \sigma_t)$ dimension $|I|+2t-1$ (see Section \ref{s:MDS}) are such $(-1)$ classes determining the chamber decomposition of the movable cone.  
Also the Mori chamber decomposition of these cones into nef chambers is given by similar conditions where walls are determined by such $(-1)$ classes of arbitrary dimension.
So far there is no explanation for why this should be true.

This leads us to the following intuition of how one might define $(-1)$ classes of arbitrary dimension, which gives further motivation to understand the divisorial $(-1)$ classes. We consider $(-1)$ classes to be irreducible components the intersections of distinct divisorial $(-1)$ classes that are orthogonal with respect to the Dolgachev-Mukai pairing. 

In \cite{LU}, Laface and Ugaglia define \emph{elementary $(-1)$ curves} to be the orbit of a line through two points under the Weyl group action. In dimension 3, this is equivalent to the intuitive definition we have just described, however our intuitive definition can be extended to higher dimension.

Let us take $X_{3,6}$ as an example. Because we are in dimension 3, we consider divisorial $(-1)$ classes, and $(-1)$ classes of dimension 1, which we will call $(-1)$ curves. A $(-1)$ curve 
be an irreducible curve contained in the intersection of two divisorial $(-1)$ classes on $X_{3,6}$. As we will later show, this means a $(-1)$ curve is of the form $w(E_i) \cdot w(E_j)$ for $w\in W_{3,6}$ the Weyl group of $X_{3,6}$ as in Remark 2.6. In this example, the only divisorial $(-1)$ classes on $X_{3,6}$ are the exceptional divisors $E_l$, hyperplanes through three points $H_{pjk}$ (i.e. the divisor $H-E_p-E_j-E_k$) and cones over the conic through five points in $\PP^2$, (i.e. $2H - 2E_p-\displaystyle\sum_{j\neq p} E_j$). 

In other words, a $(-1)$ curve is either a line through two points  
\[
L_{pj}=H_{pjk} \cdot H_{pjl}
\]
or is an intersection between two cones as described, namely for some $i, j\in \{1,\ldots, 6\}$ and $i\neq j$,
\begin{equation}\label{eq:cones}
\begin{split}
&D_k:=2H - 2E_k-\sum_{p\neq k} E_p\\
&D_j:=2H -2E_j-\sum_{p\neq j} E_p.
\end{split}
\end{equation}

By the theorem of B\'ezout, both divisors $D_i$ and $D_j$ contain in their base locus the line $L_{kj}$ passing through the first 2 points. Therefore the intersection between $D_k$ and $D_j$ is a reducible curve of degree $2\cdot 2=4$ and passing through points $p_k$ and $p_j$ with multiplicity $2\cdot 1=2$, and passing through the last four points (multiplicity $1 \cdot 1=1$). Therefore
\[
D_k \cdot D_j = L_{kj}+C.
\]
and so we deduce that $C$ has to be the unique rational normal curve in $\PP^3$ of degree $4-1=3$ passing through all six points.

Note that the other intersections $D_{p} \cdot H_{pjk}$ of orthogonal divisors with respect to Dolgachev-Mukai pairing are reducible and can be expressed as sum of two lines.

From this discussion we see that the birational geometry of $X_{3,6}$ encoded in polyhedral cones is fully determined by $26=\binom{6}{3}+ \binom{6}{1}$ divisorial $(-1)$ classes together with sixteen 
$(-1)$ curves ($16=\binom{6}{2}+1$), which are the intersection of divisorial $(-1)$ classes. 

We can use this point of view to understand the Mori Dream Space $X_{3,7}$ as well, which falls outside of the discussion that began this section. 
We obtain $X_{3,7}$ by blowing up one more point in $X_{3,6}$. Here, as shown in \cite{AC}, the birational geometry is determined by $(-1)$ divisorial classes ($119$ of them) together with lines through 2 points and rational normal curves of degree $3$ passing through $6$ points ($28$ of these).

Since the birational geometry of space $X_{n, n+3}$ is understood, the only remaining mysterious case of a Mori Dream Space is $X_{4,8}$. Its birational geometry is fully determined by divisorial $(-1)$ classes and by $(-1)$ curves, but also by special surfaces that we will call \emph{$(-1)$ surfaces}. A finite list of $(-1)$ surfaces together with some of their properties was identified in \cite{Cas2}; we will roughly present it here.

We will begin by relating the geometry of $X_{2,8}$ with $X_{4,8}$, as in \cite{Mu05}. 
More specifically, let $X_{2,8}$ be the blow up of $\PP^2$ at points $q_1,...,q_8$ and $X_{4,8}$ be the blow up of $\PP^4$ at $p_1,...,p_8$ general points. These two varieties are connected by Gale duality
giving a bijection between sets of 8 general points in $\PP^2$ and in $\PP^4$, up to projective equivalence. The precise relation between $X_{2,8}$ and $X_{4,8}$ was established in the following theorem of Mukai:
\begin{theorem}[\cite{Mu05}, see also \cite{Cas2}]
	$X_{4,8}$ is isomorphic to moduli space of rank $2$ torsion free sheaves $F$ on $X_{2,8}$ for which $c_1(F)=-K_S$ and $c_2(F)=2$. 
\end{theorem}

In this theorem, the semistability refers to semistability in the sense of Gieseker-Maruyama with respect to $-K_{X_{2,8}}+2h$ where $h\in Pic(X_{2,8})$ is the pull-back of $\mathcal{O}_{\PP^2}(1)$ under the map
$X_{2,8}\rightarrow \PP^2$.

Mukai's proof of this theorem is based on the study of the birational geometry of the moduli space of such rank 2 torsion free sheaves in terms of the variation of the stability condition given by the ample line bundle of $-K_{X_{2,8}}+2h$. 

By studying the geometry of the del Pezzo surface $X_{2,8}$ and transcribing via Mukai's correspondence, the authors in \cite{Cas2} describe the five types surfaces in $X_{4,8}$ playing a special role in the Mori program: 
\begin{enumerate}
	\item planes passing through three points;
	\item cones over one point of the rational normal curve in $\PP^3$ passing through seven points;
	\item surfaces of degree 6 with three simple points and five triple points;
	\item surfaces of degree 10;
	\item surfaces of degree 15.
\end{enumerate}
Each of these surfaces comes equipped with certain multiplicities at the eight points.

However, following the point emphasized in this work, we can explicitly construct {\it Weyl cycles} on $X_{4,8}$ as irreducible surfaces contained in the intersection of two divisorial $(-1)$ classes on $X_{4,8}$ that are orthogonal with respect to the Mukai pairing as in \cite{bdp4}. 
This definition will agree with and extend the $(-1)$ Weyl lines of Laface Ugagila \cite{LU}. For example we can take the cones over divisors \eqref{eq:cones} in $X_{3,7}$ with the vertex consisting at one point $p_k$ 
\[
D_{k,q}:=2H - 2E_k - 2E_q-\sum_{p\neq i} E_p
\]
\[
D_{k,j}:=2H - 2E_k - 2E_j-\sum_{p\neq j} E_p
\]

As in the previous example \eqref{eq:cones}, we can see the plane determined by points $L_{kqj}$ is in the base locus of both divisors $D_{k,q}$ and $D_{k,j}$. Indeed, one can see by  B\'ezout's theorem that both divisors contain a pencil of lines $L_{ks}$ passing through the cone of each divisor and through an arbitrary point point $s$ on line $L_{qj}$.

$D_{k,i}$ and $D_{k,j}$ are two divisorial $(-1)$ classes orthogonal with respect to the Mukai pairing, that intersect along a union of two surfaces of degree $2\cdot 2$ and multiplicities $m_k=2\cdot2=4$, $m_i=2$ and $m_j=2$ while all other multiplicities are $m_p=1\cdot 1=1$. Since plane $L_{kij}$ is in this intersection, the residual surface is of degree $3$---i.e.
a cone at vertex $P$ over the rational normal curve in $\PP^3$ of degree 3 passing through 6 points, $J(P, \sigma_1(C))$. 

In a similar way one can express explicitly all possible {\it Weyl surfaces} using intersection theory in the Chow Ring $A^2(X_{4,8})$ of $X_{4,8}$ along all lines and all rational normal curves of degree 4 passing through 7 points. A rigorous definition and a classification for $(-1)$ curves and $(-1)$ surfaces on $X_{4,8} $ was given in \cite{bdp4} and \cite{dm1}.
Surprisingly, these {\it Weyl surfaces} correspond to the ones of \cite{Cas2}. Moreover, these {\it Weyl surfaces} are also the Weyl group orbit of a plane through 3 fixed points (see \cite{dm1}). However, it is not known if there is an arithmetic definition similar to \eqref{eq:num} for arbitrary dimensional $(-1)$ classes on $X_{n,s}$ (and their relations with $F$ strata on $\overline{\mathcal{M}}_{0,n}$).

The current work also is related to understanding the Cox ring of $\overline{\mathcal{M}}_{0,n}$. In \cite{kap} Kapranov identified $\overline{\mathcal{M}}_{0,n}$ with a projective variety isomorphic to the projective space $\PP^{n-3}$ successively blown up along $r$-dimensional cycles spanned by $(r+1)$-subsets of a set with $n-1$ general points, with $r$ increasing from 0 to $n-4$. Though we only look at blowing up in a collection of points, there seems to be a relationship between what we will call "sporadic divisors" and generators of the Cox ring of $\overline{\mathcal{M}}_{0,n}$. For example, the first sporadic divisor $D_1=2H-E_1-E_2-\dots - E_9$ discussed in Example~\ref{eg:sporadic} is very similar to the Keel-Vermiere divisor  that is a quadric through 5 points and 4 lines (see e.g. \cite{PV}). The Keel-Vermiere divisor together with the boundary divisors are generators of the Cox ring of
$\overline{\mathcal{M}}_{0,6}$ (see \cite{C}). 
A similar statement is true for the second sporadic divisor in Example~\ref{eg:sporadic}. 
Furthermore, $\overline{\mathcal{M}}_{0,n}$ has been proven to NOT be a Mori Dream Space for $n\geq 10$ (see e.g. \cite{CT2}, \cite{GK}, \cite{HKL}). 

This brings up two questions of interest. First, can we expect a correspondence in the world of Cox ring generators of $\PP^{n-3}$ blown up in $s$ points and of $\overline{\mathcal{M}}_{0,n}$ by some sort of degeneration argument?

And secondly, one can ask if the notion of divisorial $(-1)$ classes can be generalized to the blow up of $\PP^n$ in higher dimensional cycles, as described above, and whether the generators of the Cox ring can be described by numerical conditions as discussed in the current article.  This would give some insight into the generators of Cox rings of $\overline{\mathcal{M}}_{0,n}$ through a combinatorics point of view. This problem will be the topic of future work. 

\subsection{A conjecture on $X_{n, n+4}$}
\label{s:conjecture} In general dimension $n$, few things about classical interpolation problems in $\PP^n$ are known. For example in the three dimensional space $X_{3,s}$ there is a Question similar to Conjecture~\ref{conj:GHH} formulated by Laface and Ugaglia in \cite{LU}. Surprisingly, the mysterious quadric $D_1$ analyzed in Example~\ref{eg:sporadic} plays a crucial role there. 

Even if it is difficult to formulate Conjecture~\ref{conj:GHH} in arbitrary dimension $n$ and general $s$, in \cite{bdps2} the following question is stated for $s=n+4$ points. We will briefly describe its flavor below.

Let us denote by $W(r)$ the collection of all $r$-Weyl planes in $X_{n,s}$, namely the elements in the orbit of the cycle of dimension $r$ passing through a collection of $r+1$ fixed points of $X_{n,s}$, under the Weyl group action. Recall, if $X_{n,s}$ is not a Mori Dream Space, its Weyl group is infinite (see \cite{dm2, dm3}), and moreover its orbit of a fixed $r$-cycles is also infinite. For a Weyl $r$-plane $W$ and an effective divisor $D$ in $\Pic(X_{n,s})$, let us denote by $k_{W}(D)$ the multiplicity of containment of the cycle $W$ in the base locus of the divisor $D$.

Let $W=w(L_I)$ be an $r$-Weyl plane obtained by the action of an element $w$ of the Weyl group $W_{n,s}$ on a linear cycle $L_I$ spanned by a set $I=\{p_{1}, \ldots, p_{r+1}\}$ of cardinality $|I|=r+1$.
Consider the general line class $h\in X_{n,s}$ and let $e_{i}$ denote a general line class in the exceptional divisor $E_i$. Recall the Chow group $A^{n-1}(X_{n,s})$ is generated by the classes $h$ and $e_i$ for $i=\{1, \ldots, s\}$. Further denote $c=rh-e_1-\ldots - e_{r+1}\in A^{n-1}(X_{n,s})$. Then the multiplicity of containment of the cycle $W$ in $D$ can be computed (see e.g. \cite{bdps2}) by intersection product 
\[
k_W(D)=\max\{0, -D\cdot w(c_I).\}
\]
Here the curve class $w(c_I)$ 
is computed via Weyl group action on curve classes $A^{n-1}(X_{n,s})$ as in \cite{dm1, dm2}.

\begin{conjecture}[\cite{bdps2}]\label{conj:RNC}
If $s\leq n+4$ and $D$ is an effective divisor in $\Pic(X_{n,s})$ then
\[
\dim H^0(X_{n,s}, \mathcal{O}(D))=\chi(X_{n,s}, \mathcal{O}(D))+ \sum_{r=1}^{n-1}\sum_{W\in W(r)}(-1)^{r+1}{{n+k_{W}(D)-r-1}\choose n},
\]
where the sum ranges over all the r-Weyl planes $W$ contained in the base locus of a divisor $D$. 
\end{conjecture}

Conjecture \eqref{conj:RNC} was proved to hold for $s=n+2$ in \cite{bdp1}, and for $s=n+3$ in \cite{bdps2} and \cite{LPS}. However, the case when $s=n+4$ is an open problem. On the other hand, Conjecture~\eqref{conj:RNC} cannot hold in $X_{n,s}$ for general $s$ and general $n$ (see Example~\ref{eg:sporadic}) due to the quadric passing through $9$ fixed points in $\mathbb{P}^3$, as observed by \cite{LU}. 

\newpage

\end{document}